\newcommand{\vp}{\varphi}
\newcommand{\ve}{\varepsilon}
\newcommand{\ddbar}{\sqrt{-1} \partial \overline{\partial}}
\newcommand{\de}{\partial}
\newcommand{\re}{\text{Re}}
\newcommand{\si}{\sqrt{-1}}
\newcommand{\ri}{\rightarrow}
\newcommand{\Ric}{\text{Ric}}
\newcommand{\ov}[1]{\overline{#1}}
\newcommand{\nbi}{\nabla_{i}}
\newcommand{\nbbi}{\nabla_{\bar{i}}}
\newcommand{\ti}{\tilde}
\begin{document}
	\newcounter{remark}
	\newcounter{theor}
	\setcounter{remark}{0}
	\setcounter{theor}{1}
	\newtheorem{claim}{Claim}
	\newtheorem{theorem}{Theorem}[section]
	\newtheorem{lemma}[theorem]{Lemma}
	\newtheorem{corollary}[theorem]{Corollary}
	\newtheorem{proposition}[theorem]{Proposition}
	\newtheorem{question}{question}[section]
	\newtheorem{defn}{Definition}[theor]
	\newtheorem{remark}[theorem]{Remark}
	\newtheorem{conj}[theorem]{Conjecture}
	\numberwithin{equation}{section}

	\title[The form-type Calabi-Yau equation]{The form-type Calabi-Yau equation on a class of complex manifolds}
	\author{Liding Huang}
	\address{School of Mathematical Sciences, Xiamen University, Xiamen 361005, P. R. China}
	\email{huangliding@xmu.edu.cn}

 \subjclass[2020]{Primary: 32W20; Secondary:  35J60,53C55,
	58J05}
	\keywords {The form-type Calabi-Yau equation, balanced metric, the Chern-Ricci curvature, the Bismut Ricci curvature.}
	
	\begin{abstract}
		In this paper, we study the form type Calabi-Yau equation. We define the astheno-Ricci curvature and prove that there exists a solution for the form type Calabi-Yau equation if the astheno-Ricci curvature is non-positive.
	\end{abstract}
	
	\maketitle

	\section{Introduction}
		Let $M$ be a compact Hermitian manifolds of complex dimensiona $n$ with a Hermitian metric $\alpha$.  We use
		\begin{equation}
			H^{1,1}_{BC}(M,\mathbb{R})=\frac{\{\text{d-closed real (1,1)-form}\}}{\{\ddbar \psi: \psi\in C^{\infty}(M, \mathbb{R})\}}
		\end{equation}
		to denotes the Bott-Chern cohomology group and $\text{Ric}(\alpha)$ is the Chern Ricci form of $\alpha$. The first Bott-Chern class is defined by $c_{1}^{BC}=[\text{Ric}(\alpha)]\in H_{BC}^{1,1}(M,\mathbb{R})$. Yau \cite{Yau78} proved Calabi conjecture \cite{Calabi57} on K\"ahler manifolds that have many applications in geometry and physics. We define a  Calabi-Yau manifold to be a compact complex manifold M with $c_{1}^{BC} = 0$ in $H^{1,1}_{BC}(M,\mathbb{R})$ \cite{TV}.  Motivated by mathematics and mathematical physics, the non-K\"ahler Calabi-Yau manifolds has been extensively researched \cite{TY,TV,Yau21}. It is also related to  Reid fantasy \cite{R87}. An important  problem is to find  an analog of Ricci-flat K\"ahler metrics on non-Kähler Calabi-Yau manifolds \cite{TV,Yau21}.

	Tosatti-Weinkove \cite{TW10b} proved that the existence of solutions for complex Monge-Amp\`ere equations on Hermitian manifolds, which implies  a version of Calabi-Yau type theorem for Hermitian metric (pluriclosed metrics). Sz\'ekelyhidi-Tosatti-Weinkove \cite{STW} proved that
	every compact Calabi-Yau manifold admits a Gauduchon metric whose Chern-Ricci curvature is zero, by solving the Monge-Amp\`ere type equation for (n-1)-plurisubharmonic functions.

A Hermitian metric $\omega_{0}$ is balanced if $d(\omega_0^{n-1})=0$. There are a large class
of balanced manifolds \cite{HN81, FLY12, GA66}. The balanced condition exhibits favorable mathematical properties.	A sharp characterization for  balanced metrics was  gave 
	 by \cite{MM}, and the balanced metrics  can be utilized to explore  birational
		geometry since the existence of a balanced
		metric is invariant under birational transformations \cite{AB}. Fu-Li-Yau \cite{FLY12}  proved that suppose  $Y\rightarrow\underline{Y} \rightsquigarrow X_t $ be a conifold transition, then $X_{t}$ is  a balanced manifolds for $t$ small enough, where Y is  a K\"ahler Calabi-Yau manifolds and $\underline{Y}$ is a singular variety with ODP singularities.

		It is natural to seek  a balanced metric 
		which is Chern-Ricci flat  on Calabi-Yau manifolds with a balanced metric. This problem is also related to the Strominger system. Let $\Omega$ be an no-where vanishing holomorphic  $(n, 0)$ form.  Fu-Wang-Wu \cite{FWW10}  want to seek a balanced metric $\hat{\omega}$ satisfying $\|\Omega\|_{\hat{\omega}}=1$. It is equal to find a  balanced metric which is Chern-Ricci flat.  Popovici \cite{PD} also considered this problem. Fu-Wang-Wu \cite{FWW10} proposed the following  approach to study this problem.  Let $\omega_0$ be a balanced metric, then it is suffice to find a metric $\hat{\omega}^{n-1}=\omega_0^{n-1}+\ddbar(\vp\alpha^{n-2})$ satisfying the equation
		\begin{equation}\label{intro equation}
			\hat{\omega}^n=e^{h+b}\alpha^n,
		\end{equation}
		where $\alpha$ is  a Hermitian metric. This equation called a “form-type Calabi-Yau equation" in \cite{FWW10}. It 
		can be reduced to the Monge-Am\`ere type equation for (n-1)-plurisubharmonic functions.
		If there exists a Hermitian metric has non-negative orthogonal bisectional curvature, Fu-Wang-Wu \cite{FWW15} solved this equation on K\"ahler manifolds.  In K\"ahler case, this equation was solved by Tosatti-Weinkove \cite{TW10b}. If $\alpha$ is astheno-K\"ahler, there exists a solution for \eqref{intro equation} using Sz\'ekelyhidi-Tosatti-Weinkove's results \cite{STW}.
		
For the the balanced metric which is Chern-Ricci flat and the equation  \eqref{intro equation},   the following two conjectures were stated in \cite{TV}.
	\begin{conj}\label{conj 1}
		Let M be a compact complex manifold with $c_1^{BC}=0$ and with a balanced metric $\omega_0$. Then there is a balanced metric $\hat{\omega}$ with $[\hat{\omega}^{n-1}]=[\omega_0^{n-1}]$ in $H^{2n-2}(M, R)$ with $Ric(\hat{\omega})=0$.
	\end{conj}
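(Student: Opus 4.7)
The plan is to reduce Conjecture \ref{conj 1} to solving the form-type Calabi-Yau equation \eqref{intro equation} and then attack that PDE via the continuity method. Since $c_{1}^{BC}(M)=0$, for any auxiliary Hermitian metric $\alpha$ there exists $h\in C^{\infty}(M,\mathbb{R})$ with $\Ric(\alpha)=\ddbar h$. With the Fu-Wang-Wu ansatz $\hat{\omega}^{n-1}=\omega_{0}^{n-1}+\ddbar(\vp\,\alpha^{n-2})$, the class $[\hat{\omega}^{n-1}]$ in $H^{2n-2}(M,\mathbb{R})$ is automatically preserved, and $\Ric(\hat{\omega})=0$ is equivalent to $\hat{\omega}^{n}=e^{h+b}\alpha^{n}$, where $b$ is the unique constant making total volumes match. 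It therefore suffices to produce a smooth $\vp$ solving this equation together with the positivity constraint that the $(1,1)$-form $\hat{\omega}$ implicitly determined by $\omega_{0}^{n-1}+\ddbar(\vp\,\alpha^{n-2})$ is positive definite.

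\textbf{Continuity method.} I would run the continuity path $\hat{\omega}_{t}^{n}=e^{th+b_{t}}\alpha^{n}$ for $t\in[0,1]$, with $\vp_{0}\equiv 0$. Openness of the set of solvable $t$ is standard from the implicit function theorem, once the linearized operator is shown to be elliptic and its kernel is pinned down by an integral normalization on $\vp$. Closedness reduces to uniform a priori estimates $\|\vp_{t}\|_{C^{k}}\leq C_{k}$ independent of $t$; given a $C^{2}$ bound on $\vp$ together with a two-sided bound on $\hat{\omega}_{t}$, Evans-Krylov plus a Schauder bootstrap yields the higher-order estimates. The $C^{0}$ estimate should follow a Tosatti-Weinkove type Moser iteration adapted to the $(n-1)$-plurisubharmonic setting, in the spirit of \cite{TW10b,STW}.

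\textbf{Main obstacle.} The decisive difficulty is the $C^{2}$ estimate. Differentiating \eqref{intro equation} twice and applying the maximum principle to a quantity such as $\log\lambda_{\max}(\hat{\omega})+A\vp$ produces, beyond the usable good terms, bad curvature and torsion terms coming from the non-K\"ahler geometry of $\alpha$ and from the commutator $[\partial,\dbar]$ acting on $\vp\,\alpha^{n-2}$. In the K\"ahler and astheno-K\"ahler cases these obstructions vanish or are controlled by the arguments of \cite{TW10b,STW}, and Fu-Wang-Wu \cite{FWW15} absorbed them under a non-negativity assumption on orthogonal bisectional curvature. For a general balanced background one must identify a new curvature quantity --- the paper's \emph{astheno-Ricci curvature} --- whose non-positivity forces the remaining second-order terms to be dominated by the good gradient terms on the diagonal. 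Without such a hypothesis the conjecture is expected to remain open, precisely because the torsion obstructions to the $C^{2}$ bound cannot yet be controlled from the equation alone.
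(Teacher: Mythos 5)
The statement you were asked to prove is, in the paper itself, an open conjecture (quoted from Tosatti's survey \cite{TV}); the paper does not prove it unconditionally. What the paper actually establishes is the conditional result: Conjecture \ref{conj 2} holds (Theorem \ref{main 2}) when the auxiliary metric $\alpha$ has non-positive astheno-Ricci curvature, and consequently Conjecture \ref{conj 1} holds under the extra hypothesis that $M$ admits some Hermitian metric with non-positive astheno-Ricci curvature (Corollaries \ref{main 3} and \ref{main 4}). Your proposal is therefore not a proof either, but as a strategy sketch it does match the paper's route for the conditional result: the Fu--Wang--Wu ansatz $\hat{\omega}^{n-1}=\omega_0^{n-1}+\ddbar(\vp\alpha^{n-2})$ (which keeps $\hat\omega$ balanced and fixes the class in $H^{2n-2}$), the reduction of $\Ric(\hat\omega)=0$ to $\hat\omega^n=e^{h+b}\alpha^n$ with $\Ric(\alpha)=\ddbar h$, and a continuity method with Evans--Krylov plus bootstrapping once $C^0$ and $C^2$ bounds are in hand. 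You also correctly identify that the missing ingredient for the full conjecture is exactly the curvature hypothesis the paper introduces.

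Two substantive discrepancies with the paper's actual argument are worth flagging. First, $b$ is not ``the unique constant making total volumes match'': since $\hat\omega$ is not closed, $\int_M\hat\omega^n$ is not determined by $[\hat\omega^{n-1}]$, so $b$ must be carried as an unknown; the paper pins down the pair $(\vp,b)$ by the normalization $\int_M e^{\vp}\alpha^n=A$ with $A$ \emph{small}, and this normalization (not a Tosatti--Weinkove style $C^0$ bound alone) is the paper's key new device. The Moser-iteration supremum estimate $\sup\vp\le M_0A^{1/(n+1)}$ (Proposition \ref{upper bound u}) makes the solution ``nearly non-positive'', which is what lets the sign condition $\tilde B\le 0$ absorb the zero-order term $\vp\tilde B$ both in the $C^0$/ABP lower bound and in the second-order estimate (Lemma \ref{Lvp}, via $M_0A_0^{1/(n+1)}\|\tilde B\|\le \kappa/4$); the openness and uniqueness likewise lean on this normalization and on $\tilde B\neq 0$ through the maximum principle, rather than on a generic integral normalization as in your sketch. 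Second, your ``main obstacle'' paragraph locates the difficulty only in torsion/curvature terms in the $C^2$ estimate; in this paper the decisive new difficulty is the zero-order term $\vp\tilde B$ itself, which enters the $C^0$ estimate, the openness, and the term $\xi(\vp)\vp B$ in the quantity $Q$, and it is precisely the smallness of $A$ together with the astheno-Ricci sign condition that tames it.
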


\begin{conj}\label{conj 2}
	Let M be a compact complex manifold with a balanced metric $\omega_{0}$ and a Hermitian metric $\alpha$. Let $h$ be a smooth function. Then there exists a constant $b$ and a function $\vp$ satisfying 
	\begin{equation}\label{n-1 CY equation}
		\begin{cases}
				\hat{\omega}^n=e^{h+b}\alpha^n;\\
				\hat{\omega}^{n-1}=\omega_0^{n-1}+\ddbar(\vp\alpha^{n-2})>0.
			\end{cases}
	\end{equation}
	\end{conj}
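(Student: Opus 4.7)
The plan is to attack Conjecture~\ref{conj 2} via the classical continuity method. Introduce, for $t \in [0,1]$, the family
\begin{equation*}
\hat\omega_t^{n-1} = \omega_0^{n-1} + \ddbar(\vp_t\, \alpha^{n-2}) > 0, \qquad \hat\omega_t^n = e^{t h + b_t}\alpha^n,
\end{equation*}
with the normalisation $\sup_M \vp_t = 0$ and $b_t$ determined by the volume equation itself. At $t=0$ the pair $(\vp_0,b_0) = (0,b_0)$ solves the system for a unique $b_0$. Let $S = \{t \in [0,1] : \text{a smooth pair }(\vp_t,b_t)\text{ exists}\}$, and the goal is to show $S$ is nonempty, open and closed. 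By Michelsohn's theorem the assignment $\omega \mapsto \omega^{n-1}$ is a diffeomorphism from positive $(1,1)$-forms onto positive $(n-1,n-1)$-forms, so $\hat\omega_t$ is uniquely recovered from $\vp_t$ and the system becomes a fully non-linear elliptic PDE in $(\vp_t, b_t)$. Openness of $S$ then follows from the implicit function theorem applied to the linearisation, whose principal part is elliptic with respect to the metric induced by $\hat\omega_t$.

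Closedness reduces to $t$-independent a priori estimates, following the standard chain: \textbf{(i)} a bound on $b_t$ from integrating the volume equation; \textbf{(ii)} a $C^0$ estimate on $\vp_t$, for which I would attempt the Moser iteration of Tosatti--Weinkove \cite{TW10b} reinterpreted in the $(n-1)$-plurisubharmonic setting, or the auxiliary-equation device of Sz\'ekelyhidi--Tosatti--Weinkove \cite{STW}; \textbf{(iii)} a second-order estimate via a maximum-principle argument applied to a quantity of the form $\log \text{tr}_\alpha \hat\omega + A\vp$ with $A$ large, of Hou--Ma--Wu type; \textbf{(iv)} $C^{2,\beta}$-regularity by Evans--Krylov using concavity in $\ddbar\vp$, followed by smoothness via Schauder bootstrap.

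The serious obstruction is step (iii) and, to a lesser extent, step (ii). Differentiating the equation twice produces torsion terms of $\alpha$ and of $\omega_0$ that are linear in the unknown third-order covariant derivatives and therefore not controlled \emph{a priori} by the quantity being estimated. In the existing literature these bad terms are killed by an auxiliary hypothesis---K\"ahler or astheno-K\"ahler in \cite{TW10b, STW}, non-negative orthogonal bisectional curvature in \cite{FWW15}, and non-positive astheno-Ricci in the main theorem of the present paper. Removing all such hypotheses is precisely what is open; a genuinely new ingredient---a different test quantity that preserves positivity, or a decoupling auxiliary equation in the spirit of the $(n-1)$-form trick of \cite{STW}---would be required. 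The proposal above is therefore to execute the continuity method while concentrating effort on producing such an ingredient; absent that, the plan specialises, under the curvature hypothesis of this paper, to the argument actually carried out in the body of the text.
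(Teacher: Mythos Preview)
The statement is Conjecture~\ref{conj 2}, which the paper does \emph{not} prove; it is explicitly left open. The paper establishes only the special case (Theorem~\ref{main 2}) where $*\ddbar(\alpha^{n-2})\leq 0$. Your proposal acknowledges this, so the relevant comparison is between your sketch and the paper's proof of that partial result.

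Your diagnosis of the obstruction is off. The third-order torsion terms you flag are precisely the gradient terms $Z(\partial\vp)$ and their derivatives, and these were already handled in \cite{STW} for arbitrary Hermitian $\alpha$ with no curvature hypothesis. What is new and hard in \eqref{n-1 CY equation} is the \emph{zero-order} term $\vp\tilde B$, where $\tilde B=\tfrac{1}{(n-1)!}*\ddbar(\alpha^{n-2})$, coming from the expansion of $\ddbar(\vp\alpha^{n-2})$. This couples $\vp$ itself (not merely $\ddbar\vp$ and $\partial\vp$) into the operator, and without a sign on $\tilde B$ neither the $C^0$ estimate nor the maximum-principle arguments for openness and uniqueness go through. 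In the paper the hypothesis $\tilde B\leq 0$ is used exactly at these points: in Proposition~\ref{Prop32} (ABP lower bound, where one needs $\vp\tilde B\geq 0$ on the contact set), in Lemma~\ref{Lvp} (second-order estimate, where $\sup\vp$ small keeps the contribution of $\vp\tilde B$ harmless), and in Section~5 (openness and uniqueness, where $\tilde B\leq 0$, $\tilde B\not\equiv 0$ forces trivial kernel of the linearisation).

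Under the hypothesis, two of your concrete choices still diverge from the paper's execution in ways that matter. The paper normalises by $\int_M e^{\vp}\alpha^n=A$ with $A$ small rather than $\sup_M\vp=0$; this gives a smooth constraint manifold for the implicit function theorem and, via Proposition~\ref{upper bound u} (Moser iteration on $\vp_+$), yields the quantitative bound $\sup\vp\leq M_0A^{1/(n+1)}$ that is fed into Lemma~\ref{Lvp}. And the continuity path turns on the zero-order term itself, replacing $\vp\tilde B$ by $t\vp\tilde B$, so that at $t=0$ one is solving the pure \cite{STW} equation with a constant solution; your path (deforming only $h$) would already carry the full zero-order term at $t=0$ and you would have no starting solution.
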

Jost-Yau \cite{JY93} defined the astheno-K\"ahler metric $\ddbar \omega^{n-2}=0$.
Following Jost-Yau's definition, we introduce the following definition:
\begin{defn}
	Let $\alpha$ be a Hermitian metric and $*$ be the Hodge-star operator of $\alpha$. We define the astheno-Ricci curvature of  $\alpha$ by
		\begin{equation*}\label{astheno nonpositive}
	*\ddbar (\alpha^{n-2}).
	\end{equation*} 

We say the astheno-Ricci curvature of  $\alpha$ is non-positive if 
	\begin{equation*}
*\ddbar (\alpha^{n-2})\leq 0.
\end{equation*} 
	\end{defn}
	 \begin{remark}
	 	If the astheno-Ricci curvature of  $\alpha$ is equal to 0, it is just an astheno-K\"ahler metric.   The astheno-Ricci curvature  is related to the Ricci curvature (see Proposition \ref{calculate star}). Recently, George \cite{GM} study the equation \eqref{n-1 CY equation} under  the condition  $\ddbar{\alpha^{n-2}}\leq 0$. It is equal to the astheno-Ricci curvature is non-positive.
	 	\end{remark}

	 	 We confirm the conjecture \ref{conj 2} under this condition. More precisely, we have the following theorem
	 	 \begin{theorem}\label{main 2}
	 	 	Let $M$ be a compact complex manifold with dimension n, equipped with Hermitian metrics $\alpha$ and  $\omega_0$. Suppose the astheno-Ricci curvature of $\alpha$ is  non-positive. Given  smooth functions $h$ on $M$, there exists a small  constant $A_0$ such that  for any $0<A\leq A_0$, 
	 	  there exists a unique  pair $(\vp, b)$ where $\vp\in C^{\infty}(M)$  and $b\in \mathbb{R}$ solving
	 	  	\begin{equation}\label{Herm n-1}
	 	  	\begin{cases}
	 	  		\hat{\omega}^n=e^{h+b}\alpha^n;\\
	 	  		\hat{\omega}^{n-1}=\omega_0^{n-1}+\ddbar(\vp\alpha^{n-2})>0, 	\|e^{\vp}\|_{L^{1}(M)}= A,
	 	  	\end{cases}
	 	  \end{equation}
	 	  where $A_0$ is a constant depending on $(M, \alpha),\omega_0$ and $h$.	Here $ \|e^{\vp}\|_{L^{1}(M)}=\int_{M}|e^{\vp}|\,\alpha^n$.	
	 	  Moreover, we have
	 	  \begin{equation}
	 	  	\sup \vp\leq CA,
	 	  	\end{equation} 	
	 	  	where $C$ is a uniform constant depending only on $(M, \alpha),\omega_0$ and $h$.   	 		 
	 	 \end{theorem}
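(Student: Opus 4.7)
The plan is to apply a continuity method to solve \eqref{Herm n-1}, combined with a priori estimates whose main feature is the sharp control $\sup\vp \leq CA$. First I would reformulate the system: given $\hat{\omega}^{n-1}=\omega_0^{n-1}+\ddbar(\vp\,\alpha^{n-2})>0$, one recovers $\hat{\omega}$ by Michelsohn's theorem on the $(n-1)$-th root of a positive $(n-1,n-1)$-form, so that the first equation $\hat{\omega}^n = e^{h+b}\alpha^n$ becomes a fully nonlinear elliptic PDE for $\vp$ in the admissible $(n-1)$-plurisubharmonic cone. Expanding $\ddbar(\vp\,\alpha^{n-2})$ produces a zeroth-order term $\vp\cdot\ddbar(\alpha^{n-2})$ whose sign is precisely the astheno-Ricci hypothesis, and this is what will make the linearization behave well under the maximum principle.

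For the continuity path I would interpolate in $h$: set $h_t = (1-t)h_0 + t h$ for $t\in[0,1]$, where $h_0$ is chosen (depending on $A$) so that the constant function $\vp_0 \equiv \log(A/V)$, with $V = \int_M \alpha^n$, is an exact solution at $t=0$. This is possible because the astheno-Ricci assumption $\ddbar(\alpha^{n-2})\leq 0$, combined with $\log(A/V)<0$, ensures $\hat{\omega}_0^{n-1} = \omega_0^{n-1} + \log(A/V)\ddbar(\alpha^{n-2}) \geq \omega_0^{n-1} > 0$, so $\hat{\omega}_0$ is well-defined and one simply sets $e^{h_0 + b_0} = \hat{\omega}_0^n/\alpha^n$. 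The task is then to show that the set of $t\in[0,1]$ for which a solution exists is nonempty, open, and closed.

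Openness at each $t$ follows from the implicit function theorem: I would linearize the PDE in $(\vp, b)$ and work on the subspace cut out by the differential of the $L^1$ constraint. The linearization is Fredholm of index zero by ellipticity, and its kernel is trivial because the zeroth-order coefficient of the linearized operator has the correct sign thanks to astheno-Ricci $\leq 0$; smallness of $\sup\vp$ coming from the a priori estimate closes the argument when $A$ is small. Closedness requires the a priori estimates: $C^0$ of the form $\sup\vp \leq CA$, then $C^2$, and finally $C^{2,\alpha}$ by Evans-Krylov; the $C^2$ step follows the Sz\'ekelyhidi-Tosatti-Weinkove framework for $(n-1)$-PSH Monge-Amp\`ere type equations on Hermitian manifolds, with the astheno-Ricci condition again supplying the correct sign in the Bochner-type identity. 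Uniqueness for small $A$ falls out of the same linearized analysis applied to the difference of two solutions.

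The main obstacle will be the sharp $C^0$ bound $\sup\vp \leq CA$. One must not merely bound $\sup\vp$ uniformly but produce a bound linear in $A$: this requires applying the maximum principle to the nonlinear PDE to control the oscillation (using astheno-Ricci $\leq 0$ to absorb the zeroth-order term with favorable sign), and then combining with the $L^1$ constraint $\int_M e^\vp\,\alpha^n = A$ to quantitatively locate $\sup\vp$. It is precisely this linear-in-$A$ dependence, rather than a mere qualitative bound, that forces the restriction to small $A_0$ and that lets the continuity scheme close uniformly across the whole range $(0,A_0]$.
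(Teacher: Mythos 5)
Your overall skeleton (continuity method, openness by the implicit function theorem on the space constrained by $\int_M e^{\vp}\alpha^n=A$, closedness by a priori estimates, uniqueness by the maximum principle) matches the paper, and your starting point at $t=0$ (the constant $\log(A/V)$, with $h_0$ defined by it and positivity guaranteed by $\log(A/V)<0$ and $\ddbar(\alpha^{n-2})\le 0$) is a legitimate variant of the paper's path, which instead turns on the zero-order term as $t\vp\tilde{B}$. The genuine gap is in the mechanism you propose for the key $C^0$ bound. A pointwise maximum principle cannot control $\sup\vp$ here: at an interior maximum the equation only gives $(\tilde{\chi}+\vp\tilde{B})^n\ge e^{(n-1)(h+b)}\alpha^n$, which involves the unknown constant $b$ and says nothing where $\tilde{B}$ vanishes, and for an upper bound the zero-order term has the \emph{unfavorable} sign precisely where $\vp>0$. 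The paper's supremum estimate is integral, not pointwise: one tests the positivity of $\big(\omega_0^{n-1}+\ddbar(\vp\alpha^{n-2})\big)\wedge\alpha$ against powers $\vp_{+}^{k}$, integrates by parts, and runs Moser iteration to get $\sup\vp\le M_0A^{1/(n+1)}$ (Proposition \ref{upper bound u}) -- and even this holds only under the a priori hypothesis $\sup\vp\le 1$, which must be propagated along the continuity path by a separate bootstrap (Proposition \ref{claim-2}); note the rate is $A^{1/(n+1)}$, and nothing in your scheme produces a linear-in-$A$ rate. The lower bound is likewise not a maximum-principle oscillation estimate but the Alexandroff--Bakelman--Pucci/B\l ocki--Sz\'ekelyhidi argument (Proposition \ref{Prop32}), where the astheno-Ricci sign enters only to ensure $\vp\tilde{B}\ge 0$ on the contact set, where $\vp\le 0$.

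Second, ``astheno-Ricci supplies the correct sign in the Bochner-type identity'' is not what rescues the second-order estimate. The problematic contribution is $\xi'\vp F^{i\bar i}B_{i\bar i}$, produced by the zero-order term through $\xi(\vp)$ in the test quantity $Q$, and its sign is bad wherever $\vp>0$; it is absorbed only because the supremum estimate makes $\vp_{+}$ of size $M_0A^{1/(n+1)}$, which is then swallowed by the constant $\kappa$ from the dichotomy of Lemma \ref{properties coefficients} -- this is exactly where $A_0$ is pinned down, via $M_0A_0^{1/(n+1)}\|\tilde{B}\|_{\alpha}\le\kappa/4$ in Theorem \ref{Thm4.1} and Lemma \ref{Lvp}. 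Your proposal instead attributes the smallness of $\sup\vp$ to the openness step and the restriction $A\le A_0$ to the $C^0$ step, where they play no such role. Finally, a smaller point on openness: non-positivity of the zero-order coefficient does not give a trivial kernel when $\ddbar(\alpha^{n-2})\equiv 0$ (the astheno-K\"ahler case is allowed by the hypothesis); the paper disposes of $B=0$ separately by citing Sz\'ekelyhidi--Tosatti--Weinkove, and for $\tilde{B}\not\equiv 0$ it combines the strong maximum principle with an auxiliary solution of $L_{\vp_{\hat t}}(u_0)=1$, $u_0\le 0$, to invert the linearization $(\psi,b)\mapsto L_{\vp_{\hat t}}(\psi)-b$ on the constrained space; some such device is needed to handle the extra unknown $b$ against the $L^1$ constraint, and it is also what drives the uniqueness argument.
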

	 	 
	Using Theorem \ref{main 2}, we can prove a Calabi-Yau type theorem. 
	The Bismut connection are researched extensively (see \cite{BJ,FG,ZZ} and the references therein).
	We consider the following Ricci curvature for Bismut connection:
	\begin{equation*}
		\Ric^{B}(\omega)=\Ric(\omega)+dd^{*}\omega.
		\end{equation*} 
		When the metirc is balanced,  the Bismut connection and the Chern connection agree. 
	Tosatti-Weinkove \cite{TW17} proved a Calabi-Yau theorem on $\Ric^{B}$ in balanced class on K\"ahler manifolds. We have the following corollary:
	 \begin{corollary}\label{main 3}
	 	Let M be a compact complex manifold with a balanced metric $\omega_0$. Suppose there exists a Hermitian metric that the astheno-Ricci curvature is non-positive. Then for any $\Phi\in c^{BC}_{1}\in H^{1,1}_{BC}(M, \mathbb{R})$, there is a balanced metric $\hat{\omega}$ with $[\hat{\omega}^{n-1}]=[\omega_0^{n-1}]$ in $H^{2n-2}(M, R)$ with
	 	\begin{equation*} \text{Ric}(\hat{\omega})=\text{Ric}^{B}(\hat{\omega})=\Phi.
	 		\end{equation*}
	 \end{corollary}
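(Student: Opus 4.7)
The plan is to reduce Corollary \ref{main 3} to Theorem \ref{main 2} by tuning the source function so that \eqref{Herm n-1} exactly encodes $\Ric(\hat{\omega})=\Phi$. Since $c_{1}^{BC}=[\Ric(\alpha)]$ and $\Phi\in c_{1}^{BC}$, the definition of the Bott-Chern group furnishes a smooth function $f$ with
\begin{equation*}
    \Ric(\alpha)-\Phi=\ddbar f.
\end{equation*}
I would then invoke Theorem \ref{main 2} on the pair $(\omega_{0},\alpha)$ with $h=f$ and any $A\in(0,A_{0}]$, producing a constant $b$, a function $\vp$, and a Hermitian metric $\hat{\omega}$ (uniquely determined from its $(n-1)$-st power via Michelsohn's correspondence) satisfying
\begin{equation*}
    \hat{\omega}^{n}=e^{f+b}\alpha^{n},\qquad \hat{\omega}^{n-1}=\omega_{0}^{n-1}+\ddbar(\vp\alpha^{n-2})>0.
\end{equation*}

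Next I would read off the geometric conclusions. The second equation immediately gives $[\hat{\omega}^{n-1}]=[\omega_{0}^{n-1}]$ in $H^{2n-2}(M,\mathbb{R})$; moreover, $d\omega_{0}^{n-1}=0$ by hypothesis and $d\ddbar(\cdot)\equiv 0$ on arbitrary forms, so $d\hat{\omega}^{n-1}=0$ and $\hat{\omega}$ is balanced. Applying $-\ddbar\log$ to the first equation yields
\begin{equation*}
    \Ric(\hat{\omega})-\Ric(\alpha)=-\ddbar f=\Phi-\Ric(\alpha),
\end{equation*}
that is, $\Ric(\hat{\omega})=\Phi$. For the Bismut piece, the standard Hermitian identity $*\hat{\omega}=\hat{\omega}^{n-1}/(n-1)!$ converts the balanced condition $d\hat{\omega}^{n-1}=0$ into $d^{*}\hat{\omega}=-*d*\hat{\omega}=0$, and hence $\Ric^{B}(\hat{\omega})=\Ric(\hat{\omega})+dd^{*}\hat{\omega}=\Phi$.

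In short, the corollary is purely formal once Theorem \ref{main 2} is in hand: it is a bookkeeping exercise that combines the definition of $c_{1}^{BC}$, the differential-geometric fact that balanced metrics have coclosed fundamental form, and the identity $\Ric(\hat{\omega})-\Ric(\alpha)=-\ddbar\log(\hat{\omega}^{n}/\alpha^{n})$ with the Monge-Amp\`ere equation provided by Theorem \ref{main 2}. Consequently, I do not expect any genuine obstacle at this step; the real analytic difficulty -- obtaining a priori estimates and solving \eqref{Herm n-1} under the non-positive astheno-Ricci hypothesis -- is entirely absorbed into Theorem \ref{main 2}.
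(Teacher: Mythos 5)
Your proposal is correct and is exactly the (implicit) derivation the paper intends for Corollary \ref{main 3}: pick $h$ with $\ddbar h=\Ric(\alpha)-\Phi$, solve \eqref{Herm n-1} via Theorem \ref{main 2}, and conclude balancedness from $d\ddbar=0$ together with $d\omega_0^{n-1}=0$, $\Ric(\hat{\omega})=\Phi$ from the volume-form equation, and $\Ric^{B}(\hat{\omega})=\Ric(\hat{\omega})$ from $d^{*}\hat{\omega}=0$ for balanced metrics. I see no gaps.
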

	 
	  In \cite{FJS}, Fernandez-Jordan-Streets studied the Busmut Hermitian-Einstein metric (a kind of Ricci curvature of Bismut connection) and proved that it is equal to a Hermitian-Einstein metric on exact holomorphic Courant algebroids. They also gave some examples of compact  manifolds  $X$ with the first Chern class $c_1 = 0 \in H^2(X,Z)$ which do not admit a Bismut Hermitian-Einstein metric. For the Ricci curvature $\Ric^{B}$, we conclude the following corollary:
	  
	  	 \begin{corollary}\label{main 4}
	  	Let M be a compact complex manifold with $c^{BC}_{1}=0$.
	  	  Suppose there exists a  balanced metric $\omega_0$ and a Hermitian metric that the astheno-Ricci curvature is non-positive. Then there is a balanced metric $\hat{\omega}$ with $[\hat{\omega}^{n-1}]=[\omega_0^{n-1}]$ in $H^{2n-2}(M, R)$ with $\text{Ric}(\hat{\omega})=\text{Ric}^{B}(\hat{\omega})=0$.
	  \end{corollary}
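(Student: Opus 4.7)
The statement is essentially an immediate specialization of Corollary \ref{main 3} to the representative $\Phi = 0$. My plan is to verify that the hypotheses line up and that the conclusion collapses correctly.

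First, I would check the hypotheses of Corollary \ref{main 3}: the existence of a balanced metric $\omega_0$ and the existence of a Hermitian metric with non-positive astheno-Ricci curvature. Both are supplied verbatim by the assumptions of Corollary \ref{main 4}. Since we further assume $c_1^{BC}(M) = 0$ in $H^{1,1}_{BC}(M,\mathbb{R})$, the zero form is a (trivial) representative of the first Bott-Chern class, so $\Phi = 0 \in c_1^{BC}$ is an admissible choice in the conclusion of Corollary \ref{main 3}.

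With $\Phi = 0$, Corollary \ref{main 3} produces a balanced metric $\hat\omega$ satisfying $[\hat\omega^{n-1}] = [\omega_0^{n-1}]$ in $H^{2n-2}(M,\mathbb{R})$ together with $\text{Ric}(\hat\omega) = \text{Ric}^B(\hat\omega) = 0$, which is precisely the claim. The only point worth remarking, and it is not an obstacle, is the coincidence of the two Ricci curvatures on a balanced metric: since $d^*\hat\omega = 0$, the definition $\text{Ric}^B(\hat\omega) = \text{Ric}(\hat\omega) + dd^*\hat\omega$ immediately yields $\text{Ric}^B(\hat\omega) = \text{Ric}(\hat\omega)$, so the vanishing of one forces the vanishing of the other.

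Because the deduction is a direct specialization, I do not anticipate a genuine obstacle; the entire substantive work is carried out in Theorem \ref{main 2} and repackaged by Corollary \ref{main 3}. The only care needed is to confirm that no extra normalization (such as the $L^1$ constraint $\|e^\vp\|_{L^1(M)} = A$ appearing in Theorem \ref{main 2}) obstructs the passage to $\Phi = 0$; but this normalization is already absorbed into the statement of Corollary \ref{main 3}, so the reduction is clean.
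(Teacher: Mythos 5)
Your proposal is correct and matches the paper's intent: Corollary \ref{main 4} is obtained exactly by applying Corollary \ref{main 3} with the representative $\Phi=0\in c_1^{BC}$, which is admissible precisely because $c_1^{BC}=0$, and the remark that $\text{Ric}^B(\hat\omega)=\text{Ric}(\hat\omega)$ for balanced $\hat\omega$ is the same observation the paper makes. No gap.
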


	 Since the equation involves the zero order terms,  it is very difficult to solve this equation. To overcome the difficulty, we consider the condition that the  astheno-Ricci curvature of the background metric   is non-positive.  We prove a new $C^0$ estimates for the equation \eqref{n-1 CY equation}. 
	The normalized condition $\int_{M}e^{\vp}\alpha^n=A$  ensures that the positive part $\vp_{+}$ is integrable.  Using Moser iteration, we can prove the solutions are “nearly non-positive". To be more specific, assuming that $\vp\leq 1$, motivated by Fu-Yau \cite{FY}, for a small constant $A$, we can prove 
	 \[\vp\leq CA^{\frac{1}{n+1}}.\]
Then we obtain the above bound for the solutions via
	 the continuity method( see Proposition \ref{upper bound u} and Proposition \ref{claim-2}).
	 
	  The estimates plays an important role in the proof of the second order estimates. The assumption of the astheno-Ricci curvature can be used in the proof of $C^{0}$-estimates and the openness.
	   However, for the second order estimates, there are new difficulties come from $\xi(\vp)$  in the quantity $Q$ for  the terms $\vp B$. We overcome these
	   difficulties by our supremum estimates (see Lemma \ref{Lvp}).
 Adapting Sz\'ekelyhidi’s argument \cite{Szekelyhidi18} based on Blocki's  work \cite{Blocki05, Blocki11}, we established the infimum estimates. 
 We expect that the analogous argument can be
 extended to study the equation \eqref{n-1 CY equation} on general Hermitian manifolds.

	 For the second order estimate, following the idea of \cite{HMW10,Szekelyhidi18,STW} ,  we prove the following second order estimates 
	 \begin{equation}\label{second order estimate 0}
	 \sup_{M}|\ddbar \vp|_{\alpha}\leq C(1+\sup_{M}|\nabla \vp|^2_{\alpha}).
	 \end{equation}
	Since there exists a zero order term in the equation, some essential modifications of the arguments in \cite{STW} are needed. In this paper, we prove
	 	a special $C^{0}$ estimates such that we can use the argument in \cite{STW} to prove \eqref{second order estimate 0}.
	 Then the second order estimate follows from the blowup argument and Liouville type theorem \cite{Szekelyhidi18}.
	 
	 Recently, 	George \cite{GM} also proved that there exists a unique  solution for \eqref{n-1 CY equation} under the non-positive of astheno-Ricci curvature. Compared to his results, our method is
	 quite different. We assume the normalized condition and use a new method to prove the $C^0$ estimates. The proof of $C^{0}$-estimates, the openness and the uniqueness of solutions  heavily depends on the normalized condition. On the other hand, 
	for the second order estimates,   the supremum estimates are used to deal with the bad term which comes form the zero order terms in \eqref{n-1 psh}.

	 The organization of paper is as follows. In Section 2, we will introduce some notations and recall
	 some important properties of the equation \eqref{n-1 CY equation}. The zero order estimate will be established in Section 3.  We will derive the second order estimates and prove the higher order estimates in Section 4, and complete the proof of Theorem \ref{main 2} in Section 5.
	 
\bigskip

{\bf Acknowledgements.} The author
would also like to thank  Professor Jixiang Fu, Jianchun Chu and Rirong  Yuan for many useful discussions.  
	 
	 	\section{Preliminaries and notation}
	 
	Let $*$ be the Hodge star operator with respect to $\alpha$.
	Set 
	\begin{equation*}
		\begin{split}
	&\tilde{\chi}=\frac{1}{(n-1)!}*(\omega_{0}^{n-1}), Z(\partial \vp)=\frac{2}{(n-1)!}*\re(\si \partial \vp \wedge \bar{\partial}(\alpha^{n-2})),\\ &\tilde{B}(\vp)=\frac{1}{(n-1)!}*\ddbar(\alpha^{n-2}).
	\end{split}
	\end{equation*}
	For convenience, we use $Z$ and $\tilde{B}$ to denote  $Z(\partial \vp)$ and $\tilde{B}(\vp)$, respectively. 
	Note 
	\begin{equation*}
	\frac{1}{(n-1)!}*(\ddbar\vp\wedge\alpha^{n-2})=\frac{1}{n-1}(\Delta \vp\alpha-\ddbar \vp).
	\end{equation*}
Using the argument in \cite{TW17,STW},	the equation can be rewritten as
	\begin{equation}\label{n-1 ma}
		\begin{cases}
			(\tilde{\chi}+\frac{1}{n-1}((\Delta \vp)\alpha-\ddbar \vp)+Z+\vp\tilde{B})^{n}=e^{(n-1)(h+b)}\alpha^{n},\\
			\tilde{\omega}=\tilde{\chi}+\frac{1}{n-1}((\Delta \vp)\alpha-\ddbar \vp)+Z+\vp\tilde{B}>0.
		\end{cases}
	\end{equation}
	Let T be the linear map given by
	\begin{equation}\label{trans}
		T(\hat{\omega})=\text{tr}_{\alpha}\, \hat{\omega}\alpha-(n-1)\hat{\omega},
	\end{equation}
	where $\hat{\omega}$ is a real smooth $(1,1)$ form. 
	We also have
		\begin{equation}\label{tran inverse}
	\hat{\omega}=\frac{1}{n-1}((\text{tr}_{\alpha} T(\hat{\omega}))\alpha-T(\hat{\omega})).
	\end{equation}
	Denote
	\begin{equation}\label{notation}
		\omega=T(\tilde{\omega}),\chi=T(\tilde{\chi}), W=T(Z), B=T(\tilde{B}).
	\end{equation}
Then
	\begin{equation}\label{omega formula}
		\omega=\chi+\ddbar \vp+W(\partial \vp)+\vp B. 
	\end{equation}
Suppose
\begin{equation*}
	\begin{split}
		\alpha=\sqrt{-1}\alpha_{i\bar j} dz_i\wedge d z_{\bar j} ,\,\,& \omega=\sqrt{-1}g_{i\bar j} dz_i\wedge d z_{\bar{j}},\\
	\chi=\si \chi_{i\bar j} dz_i\wedge d z_{\bar{j}},\,\,& \tilde{\chi}=\si \tilde{\chi}_{i\bar j}dz_i\wedge d z_{\bar{j}},\\
\tilde{\omega}=\sqrt{-1}\tilde{g}_{i\bar j} dz_i\wedge d z_{\bar{j}},\,\,&	B=\si B_{i\bar j}dz_i\wedge d z_{\bar{j}},\\
\end{split}
\end{equation*}
in local holomorphic coordinate system $(z_1,\cdots, z_n)$. 

	In a neighborhood of $x_0$, we choose normal coordinates $(z_1,\cdots, z_n)$ such that at $x_0$, we have $\alpha_{i\bar j}=\delta_{ij}$, $g_{i\bar j}=\lambda_i\delta_{i\bar j}$ and $\lambda_1\geq \cdots\geq\lambda_n$. 
	Assume that $\tilde{\lambda}_1,\cdots,\tilde{\lambda}_n$ are the eigenvalue of $\tilde{\omega}$ with respect to $\alpha$. 
	Then we have 
	\begin{equation}
	\tilde{\lambda}_i=\frac{1}{n-1}\sum_{k\neq i}\lambda_i
	\end{equation}
	and the equation can be rewritten as
	\begin{equation}\label{n-1 psh}
		\begin{cases}
			F(\omega)=f(\lambda_1,\cdots,\lambda_n)=\log\pi_{k}(\sum_{k\neq i}\lambda_i)=(n-1)(h+b),\\
			\omega \in T(\Gamma_{n}),
		\end{cases}  
	\end{equation}
	where $\Gamma_n$ is the positive orthant.

	Define
	\begin{equation*}
		F^{i\overline{j}}=\frac{\partial F}{\de g_{i\ov{j}}}, \quad
		F^{i\overline{j},k\overline{l}}=\frac{\partial^{2}F}{\de g_{i\ov{j}}\de g_{k\ov{l}}}.
	\end{equation*}
	In addition, the equation \eqref{n-1 psh} also can be viewed as a function on $\tilde{g}$. We denote
	\begin{equation}
		\tilde{F}^{i\bar i}=\frac{\partial F}{\partial \tilde{g}_{i\bar j}}.
		\end{equation}
	Set
	\begin{equation}\label{coef copar}
	 f_k=\frac{\partial f}{\partial \lambda_i}=\frac{1}{n-1}\sum_{k\neq i}\frac{1}{\tilde{\lambda}_i} \text{\,\,~and~\,\, } \tilde{f}_k=\frac{1}{\tilde{\lambda}_{k}}.	
	 \end{equation}
Then 
	\begin{equation}\label{elliptic coefficients}
		\tilde{F}^{i\bar j}=\tilde{f_i}\delta_{ij} \text{\,\,and\,\,} F^{i\bar{i}}=f_i\delta_{ij}.
		\end{equation}
	\subsection{The properties for the form type Calabi-Yau equation}	
	The following properties has been used in \cite{STW}.
	\begin{lemma}\label{prop f}
		\begin{enumerate}
			\item  	$\tilde{\lambda}_1\leq \cdots \leq\tilde{\lambda}_n$.
			\item $\tilde{f}_n\leq \tilde{f}_{n-1}\leq \cdots\leq \tilde{f}_1$.
			\item $f_{k}\leq  \tilde{f}_{1}\leq (n-1)f_k$, $k\geq 2$.
			\item $\tilde{f}_{k}\leq (n-1)f_1,k\geq 2.$
			\item $\frac{1}{n(n-1)} \mathcal{F}\leq F^{k\bar k}$, $k\geq 2$,
		\end{enumerate}
		where $\mathcal{F}=\sum_{i}F^{i\bar i}$.
		\end{lemma}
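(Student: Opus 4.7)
My plan is to derive all five statements by direct manipulation of the definitions in \eqref{coef copar}. First, I observe the identity $\tilde{\lambda}_i = \frac{1}{n-1}(\sum_k \lambda_k - \lambda_i)$, which displays $\tilde{\lambda}_i$ as a strictly decreasing affine function of $\lambda_i$. Together with the chosen ordering $\lambda_1 \geq \cdots \geq \lambda_n$, this immediately reverses into $\tilde{\lambda}_1 \leq \cdots \leq \tilde{\lambda}_n$, giving (1). Part (2) is then automatic from $\tilde{f}_k = 1/\tilde{\lambda}_k > 0$ together with (1).

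For parts (3) and (4) I would rewrite $f_k = \frac{1}{n-1}\sum_{i\neq k}\tilde{f}_i$ as the scaled average of the $n-1$ values $\{\tilde{f}_i : i \neq k\}$. Since $\tilde{f}_1$ is the maximum among the $\tilde{f}_i$ by (2), each such sum is bounded above by $(n-1)\tilde{f}_1$, yielding $f_k \leq \tilde{f}_1$. Conversely, for $k \geq 2$ the index $1$ lies in the summation range, so dropping the remaining positive terms gives $f_k \geq \tilde{f}_1/(n-1)$, i.e.\ $\tilde{f}_1 \leq (n-1)f_k$. The symmetric observation that $\tilde{f}_k$ appears in the sum defining $f_1$ whenever $k \geq 2$ yields (4) in exactly the same way.

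Part (5) requires one small auxiliary identity: by double-counting,
\[
\mathcal{F} = \sum_i f_i = \frac{1}{n-1}\sum_i \sum_{j\neq i}\tilde{f}_j = \sum_j \tilde{f}_j,
\]
which is therefore bounded above by $n\,\tilde{f}_1$ thanks to (2). Combining this with the lower bound $F^{k\bar k} = f_k \geq \tilde{f}_1/(n-1)$ from (3) for $k\geq 2$ gives $F^{k\bar k} \geq \mathcal{F}/(n(n-1))$, which is (5). There is really no obstacle in this lemma: the whole argument is book-keeping with the formulas in \eqref{coef copar} and the ordering from (2). The only delicate point to watch is that the restriction $k\geq 2$ in parts (3)--(5) is precisely what ensures the index $1$, where the maximum $\tilde{f}_1$ is attained, actually appears in the relevant sum; if $k=1$ were allowed the lower bounds would fail.
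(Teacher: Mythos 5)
Your proposal is correct and follows essentially the same route as the paper: both arguments are direct bookkeeping with $\tilde{\lambda}_i=\frac{1}{n-1}\sum_{k\neq i}\lambda_k$, $\tilde{f}_k=1/\tilde{\lambda}_k$, $f_k=\frac{1}{n-1}\sum_{i\neq k}\tilde{f}_i$ and the ordering from (2), including the identity $\mathcal{F}=\sum_i\tilde{f}_i\leq n\tilde{f}_1\leq n(n-1)F^{k\bar k}$ for (5). Your version merely spells out (via double counting and the role of the index $1$ for $k\geq 2$) what the paper's terser proof leaves implicit.
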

	\begin{proof}
		By the definition of $\tilde{\lambda}_i$, the (1) and (2) follows. It is easy to see $f_k\leq \frac{1}{\tilde{\lambda}_1}=\tilde{f}_1$. Since $\tilde{\lambda}_i>0$, we have $\tilde{f}_1\leq (n-1)f_k$ and $\tilde{f}_k\leq (n-1)f_1$, $k\geq 2$. Note $\mathcal{F}=\sum_{i}\frac{1}{\tilde{\lambda}_i}\leq n\frac{1}{\tilde{\lambda}_1}\leq n(n-1)F^{k\bar k}$.
		\end{proof}
	
Sz\'ekelyhidi \cite{Szekelyhidi18} used the $\mathcal{C}$-subsolution to prove the following properties which is a
refinement of Guan \cite[Theorem 2.18]{Guan14} and  the properties are improved in \cite{CM,YR}.   For the equation \eqref{n-1 psh}, it can be proved directly.  	Using the argument in \cite{STW},  we prove
	\begin{lemma}\label{properties coefficients}
		There exist $\kappa$ depending only on $\chi$, n and $h$, such that 
		\begin{enumerate}
			\item we either have
		\begin{equation*}
			f_{k}(\lambda)(\chi_{k\bar{k}}-\lambda_{k})\geq \kappa \sum_{k}f_{k}(\lambda)
		\end{equation*}
	\item  or	$f_{k}\geq \kappa\sum_{i}f_{i}$.
			\end{enumerate}
			In addition, $\sum_{k}f_{k}(\lambda)>\kappa$.
			
	\end{lemma}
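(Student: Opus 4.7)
The plan is to exploit the fact that in the $\tilde\lambda$-variables the operator becomes $f(\tilde\lambda)=\sum_k\log\tilde\lambda_k$, a log-Monge-Amp\`ere operator, while $\tilde\chi=\tfrac{1}{(n-1)!}*(\omega_0^{n-1})$ is uniformly positive-definite. Diagonalizing $\alpha$ and $\omega_0$ simultaneously at a point shows $\tilde\chi$ has eigenvalues $\prod_{j\neq k}\mu_j>0$, where $\mu_j>0$ are the eigenvalues of $\omega_0$ with respect to $\alpha$; hence there is a uniform $c_0>0$, depending only on $\omega_0$ and $\alpha$, with $\tilde\chi\geq c_0\alpha$ on $M$. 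In the frame diagonalizing $\omega$ this yields $\tilde\chi_{k\bar k}\geq c_0$ for every $k$.

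In that frame I would then combine the identities $\tilde f_k=1/\tilde\lambda_k$, $f_k=\tfrac{1}{n-1}\sum_{j\neq k}1/\tilde\lambda_j$ and $\tilde\chi_{k\bar k}=\tfrac{1}{n-1}\sum_{j\neq k}\chi_{j\bar j}$ (the last from $\tilde\chi=\tfrac{1}{n-1}(\mathrm{tr}_\alpha\chi\cdot\alpha-\chi)$). Swapping the order of summation gives the Euler relation $\sum_k f_k\lambda_k=\sum_k\tilde f_k\tilde\lambda_k=n$ together with the translation identity
\[
\sum_k f_k\chi_{k\bar k}=\sum_k\tilde f_k\tilde\chi_{k\bar k}\geq c_0\,\mathcal{F},
\]
which combine to the central estimate $\sum_k f_k(\chi_{k\bar k}-\lambda_k)\geq c_0\mathcal{F}-n$. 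I would then split on the size of $\mathcal{F}$. If $\mathcal{F}\geq 2n/c_0$, the right-hand side is $\geq (c_0/2)\mathcal{F}$ and alternative (1) holds (in its summed form) with $\kappa=c_0/2$. Otherwise $\mathcal{F}$ is bounded above by $2n/c_0$; from $\tilde f_k\leq\mathcal{F}$ we obtain $\tilde\lambda_k\geq c_0/(2n)$, and combined with $\prod_j\tilde\lambda_j=e^{(n-1)(h+b)}$ being controlled (via the given $h$ and the a priori bound on $b$) each $\tilde\lambda_k$ sits in a compact subset of $(0,\infty)$; consequently each $f_k$ is bounded below by a positive constant, so $f_k\geq\kappa\mathcal{F}$ for all $k$, giving alternative (2). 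The uniform lower bound $\sum_k f_k>\kappa$ is immediate from AM-GM, since $\mathcal{F}=\sum 1/\tilde\lambda_k\geq n\,e^{-(n-1)(h+b)/n}$.

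The main technical point is the translation identity $\sum_k f_k\chi_{k\bar k}=\sum_k\tilde f_k\tilde\chi_{k\bar k}$; this is the bridge by which the intrinsic positivity of $\tilde\chi$ (which comes from $*(\omega_0^{n-1})$ recovering a positive $(1,1)$-form from the Hermitian metric $\omega_0$) is transported into the $\lambda$-coordinate inequality of the lemma. As written, alternative (1) in the statement appears to lack a $\sum_k$ on the left-hand side; the argument above naturally produces the standard summed inequality $\sum_k f_k(\chi_{k\bar k}-\lambda_k)\geq \kappa\mathcal{F}$ used in applications, matching the formulation in \cite{Szekelyhidi18,STW}.
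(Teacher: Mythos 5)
Your proof is correct and follows essentially the same route as the paper: the translation identity $\sum_k f_k\chi_{k\bar k}=\sum_i \tilde f_i\tilde\chi_{i\bar i}\geq \tau\,\mathcal{F}$ together with $\sum_k f_k\lambda_k=n$, then a dichotomy (large versus bounded $\mathcal{F}$, which is equivalent to the paper's large versus bounded $\lambda_1$ since the product $\prod_i\tilde\lambda_i=e^{(n-1)(h+b)}$ is controlled), and AM--GM for the final claim. Your reading of alternative (1) in its summed form and your explicit appeal to the bound on $b$ also match what the paper's terser argument implicitly uses.
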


	\begin{proof}
		By directly calculation,
		\begin{equation*}
			\sum_{k}f_{k}\chi_{k\bar k}=\sum_{i}\frac{1}{\tilde{\lambda}_i}\tilde{\chi}_{i\bar i}>\tau\sum_{i}\frac{1}{\tilde{\lambda}_i}=\tau\sum_{k}f_{k}(\lambda),
			\end{equation*}
			for some $\tau>0$ depending on a lower bound for $\tilde{\chi}$. We also have
			\begin{equation*}
				\sum_{k}\lambda_{k}f_{k}(\lambda)=n.
				\end{equation*}
Note that if $\lambda_1$ is large enough, then $\tilde{\lambda}_n$ is also large which implies that $\tilde{\lambda}_1$ is small. Then 
			we have the alternative (1) in Proposition \eqref{properties coefficients}. If for any  $\lambda_i$ is bounded, then (2) is true.  In addition,
			\begin{equation*}
				\sum_{k}f_{k}(\lambda)=\sum_{i=1}^n\frac{1}{\tilde{\lambda_i}}\geq n(\tilde{\lambda}_1\cdots\tilde{\lambda}_n)^{-\frac{1}{n}}=ne^{-\frac{(n-1)(h+b)}{n}},
			\end{equation*}
			so the final claim in Proposition \ref{properties coefficients} also holds. 
		\end{proof}
	
Assume that locally $Z=2\re(Z^k_{i\bar{j}}\vp_{k})$ and $W=2\re(W^k_{i\bar{j}}\vp_{k})$.  We need the following properties about $Z$. It was proved in \cite[p. 208]{STW}.
	\begin{lemma}\label{prop Z}
Choose  normal coordinate system such that  $\alpha_{i\bar j}=\delta_{ij}$  at $x_0$. Then at $x_0$, we have 
\begin{enumerate}
	\item $Z_{i\bar j}$  is independent of $\vp_i$ and $\vp_{\bar i}$, i.e., $Z_{i\bar i}^i(x_0)=\overline{Z_{i\bar i}^i}(x_0)=0$.
	\item $\nabla_{i}Z_{i\bar i}(x_0)$ is independent of $\vp_i$.
	\end{enumerate}
		\end{lemma}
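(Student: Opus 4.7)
Both statements are pointwise at $x_0$, so I work in holomorphic coordinates centred at $x_0$ with $\alpha_{i\bar j}(x_0)=\delta_{ij}$ and use the defining identity
\[
Z=\frac{2}{(n-1)!}\,*\,\re\!\bigl(\si\,\partial\vp\wedge\bar\partial\alpha^{n-2}\bigr),\qquad \bar\partial\alpha^{n-2}=(n-2)\,\alpha^{n-3}\wedge\bar\partial\alpha,
\]
together with the fact that, in an orthonormal coframe at $x_0$, the Hodge star sends $\prod_{k\neq i}\si\,dz_k\wedge d\bar z_k$ to a fixed constant times $\si\,dz_i\wedge d\bar z_i$.

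For part (1), the coefficient $Z^{i}_{i\bar i}(x_0)$ of $\vp_i$ in $Z_{i\bar i}(x_0)$ equals, up to a combinatorial constant, the coefficient of $\prod_{k\neq i}dz_k\wedge d\bar z_k$ in the $(n-1,n-1)$-form $\si\,\vp_i\,dz_i\wedge\bar\partial\alpha^{n-2}$. Since $dz_i\wedge X$ necessarily contains the factor $dz_i$ for any form $X$, while $\prod_{k\neq i}dz_k$ does not contain $dz_i$, this coefficient vanishes. Hence $Z^{i}_{i\bar i}(x_0)=0$; the conjugate statement $\overline{Z^{i}_{i\bar i}}(x_0)=0$ follows by applying the same wedge-parity argument to $\bar\partial\vp$ inside $\re(\cdots)$.

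For part (2), expand the Chern covariant derivative
\[
\nabla_i Z_{i\bar i}(x_0)=\partial_i Z_{i\bar i}(x_0)-\Gamma^{q}_{ii}(x_0)\,Z_{q\bar i}(x_0),\qquad \Gamma^{q}_{pi}(x_0)=\partial_p\alpha_{i\bar q}(x_0).
\]
Writing $Z_{i\bar j}(y)=Z^{k}_{i\bar j}(y)\,\vp_k(y)+\overline{Z^{k}_{j\bar i}}(y)\,\vp_{\bar k}(y)$ near $x_0$, the coefficient of $\vp_i(x_0)$ in $\nabla_i Z_{i\bar i}(x_0)$ is
\[
\partial_i Z^{i}_{i\bar i}(x_0)-\partial_i\alpha_{i\bar q}(x_0)\,Z^{i}_{q\bar i}(x_0).
\]
From the Hodge-star formula, $Z^{k}_{i\bar j}(y)$ is an explicit polynomial in $\alpha_{p\bar q}(y), \alpha^{p\bar q}(y)$ and the first derivatives of $\alpha(y)$. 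Differentiating by Leibniz at $x_0$, contributions split into two types: derivatives hitting the metric factors inside $*$ (i.e.\ varying the Hodge inner product away from the Euclidean one) and derivatives hitting the $\bar\partial\alpha^{n-2}$ factor. The first type produces precisely $\partial_i\alpha_{i\bar q}(x_0)\,Z^{i}_{q\bar i}(x_0)$, exactly cancelling the Christoffel term; the second type produces expressions that vanish at $x_0$ by the same wedge-parity obstruction used in part (1), because differentiation does not change the structural fact that any contribution to $\vp_i$ requires $dz_i$ to appear in a slot where it cannot.

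\textbf{Main obstacle.} Part (1) reduces to a clean wedge-parity observation. The work in part (2) is bookkeeping: one must systematically Leibniz-expand $\partial_i Z^{i}_{i\bar i}(x_0)$, separate the contribution of differentiating the metric-dependent Hodge star from that of differentiating $\bar\partial\alpha$, and recognize the exact cancellation with the Chern Christoffel correction. This cancellation is the invariant statement that $\vp_i$ does not appear in the tensor component $\nabla_i Z_{i\bar i}$ at $x_0$, even though both $\partial_i Z^{i}_{i\bar i}(x_0)$ and the Christoffel symbols are individually nonzero in general.
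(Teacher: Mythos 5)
Part (1) of your proposal is correct: at $x_0$ the (Euclidean) Hodge star pairs the $i\bar i$-component of $*\Psi$ with the coefficient in $\Psi$ of the basis $(n-1,n-1)$-form omitting $dz_i$ and $d\bar z_i$, and the $\vp_i$- (resp.\ $\vp_{\bar i}$-) terms of $\re(\si\partial\vp\wedge\db\alpha^{n-2})$ carry an explicit factor $dz_i$ (resp.\ $d\bar z_i$), so they cannot contribute. This is a clean alternative to the argument the paper points to, since the paper gives no proof but cites \cite[p.~208]{STW}, where $Z$ is computed explicitly in terms of the torsion of $\alpha$ and both statements are read off from that formula.

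Part (2), however, has a genuine gap: the key cancellation is asserted rather than proved, and as stated it is false. Two problems. First, the lemma is used (e.g.\ in Lemma \ref{W 1}) with second derivatives of $\vp$ written as covariant Hessian entries $\vp_{ki}=\nabla_i\nabla_k\vp$; rewriting $\partial_i\vp_k=\vp_{ki}+\Gamma^p_{ik}\vp_p$ produces an extra contribution $\Gamma^i_{ik}Z^k_{i\bar i}$ to the coefficient of $\vp_i$, i.e.\ the relevant quantity is the full covariant derivative $\nabla_iZ^i_{i\bar i}=\partial_iZ^i_{i\bar i}+\Gamma^i_{ik}Z^k_{i\bar i}-\Gamma^q_{ii}Z^i_{q\bar i}$ of the three-index coefficient tensor, not the two-term expression you wrote. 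Second, the variation of the Hodge star does not produce ``precisely'' the Christoffel term: differentiating $(\si dz_a\wedge d\bar z_b)\wedge\Theta=(\si dz_a\wedge d\bar z_b,*_y\Theta)_y\,dV_y$ with $\Theta=\si dz_i\wedge\db(\alpha^{n-2})$ fixed gives at $x_0$
\begin{equation*}
\partial_i\bigl[(*_y\Theta)_{i\bar i}\bigr]=(*\Theta)_{i\bar r}\,\partial_i\alpha_{r\bar i}+(*\Theta)_{s\bar i}\,\partial_i\alpha_{i\bar s}-(*\Theta)_{i\bar i}\,\operatorname{tr}(\partial_i\alpha),
\end{equation*}
so besides the piece cancelling $\Gamma^q_{ii}Z^i_{q\bar i}$ (and the trace term, which vanishes by part (1)), there remains $\sum_r Z^i_{i\bar r}\,\partial_i\alpha_{r\bar i}$, which is generically nonzero: already for $n=3$ one computes $Z^1_{1\bar 2}\propto\partial_{\bar 2}\alpha_{3\bar 3}-\partial_{\bar 3}\alpha_{3\bar 2}$. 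The lemma holds only because this leftover cancels the omitted upper-index Christoffel term, via the additional identity $Z^k_{i\bar j}(x_0)=-Z^j_{i\bar k}(x_0)$ (antisymmetry of $\bigl(*(\si dz_k\wedge\db\alpha^{n-2})\bigr)_{i\bar j}$ under exchanging $dz_k$ with the test factor $dz_j$, a one-line wedge computation), which your argument neither states nor uses. So the ``bookkeeping'' you defer is exactly where the content of (2) lies; as written, your proof of (2) does not go through, whereas a correct argument needs either STW's explicit torsion formula for $Z$ or the covariant expansion plus the antisymmetry just described.
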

		
On the other hand, the linear operator of the equation \eqref{n-1 CY equation} is 
	\begin{equation}
		\begin{split}
		L(\psi)=&F^{i\bar{j}}(\psi_{i\bar j}+2\re(W^k_{i\bar j}\psi_k)+B_{i\bar j}\psi)\\
		=&F^{i\bar{j}}(\psi_{i\bar j}+2\re(W^k_{i\bar j}\psi_k))+\tilde{F}^{i\bar j}\tilde{B}_{i\bar j}\psi.
		\end{split}
		\end{equation}

	\subsection{The astheno-Ricci curvature}
	In this subsection, we will calculate the astheno-Ricci curvature $*\ddbar(\omega^{n-2}).$
Recall the curvature tensor are defined by
\begin{equation*}
	R_{i\bar jk\bar l}=-\partial_{k}\partial_{\bar l}\alpha_{i\bar j}+\alpha^{p\bar q}\partial_{\bar l} \alpha_{p\bar j}\partial _{k}\alpha_{i\bar q}.
\end{equation*}
Define the following Ricci curvature and scalar curvature
\begin{equation*}
	\text{Ric}_{i\bar j}=\alpha^{k\bar l}R_{k\bar l i\bar j},  \text{Ric}^{(2)}_{i\bar j}=\alpha^{k\bar l}R_{ i\bar j k\bar l },
	\text{Ric}^{(3)}_{i\bar j}=\alpha^{k\bar l}R_{ k\bar j i\bar l },
	\text{Ric}^{(4)}_{i\bar j}=\alpha^{k\bar l}R_{ i\bar l k\bar j }
	\end{equation*}
	and 
	\begin{equation*}
		R=\alpha^{i\bar j}	\text{Ric}_{i\bar j}, 	R^{(2)}=\alpha^{i\bar j}	\text{Ric}^{(2)}_{i\bar j}, 	R^{(3)}=\alpha^{i\bar j}	\text{Ric}^{(3)}_{i\bar j}, R^{(4)}=\alpha^{i\bar j}	\text{Ric}^{(4)}_{i\bar j}.
		\end{equation*}
				Denote $T=\partial \alpha$
		and assume  T=$\frac{1}{2}T_{sj\bar k}dz_s\wedge dz_j\wedge d\bar{z}_k$ in local holomorphic coordinates system. Then we define $T_s=\alpha^{j\bar k}T_{sj\bar k}$ and $\tau=T_{s}dz_s$.
		
		 Phong-Picard-Zhang \cite{PPZ19} calculated the $*(\ddbar \alpha^{n-2})$ for conformal balanced metric.
		Now  we give a formula for $*(\ddbar \alpha^{n-2})$ for general Hermitian metric. From the following results, we can see that $*(\ddbar \alpha^{n-2})$ is closed related to the Ricci curvature.
		\begin{proposition}\label{calculate star}
	\begin{enumerate}
		\item If n=3,	\begin{equation*}
			\begin{split}
				&*(\ddbar(\alpha^{n-2}))_{m\bar l}\\[2mm]
				=& \sqrt{-1}(n-2)!(\text{Ric}^{(2)}_{m\bar l}-\text{Ric}^{(3)}_{m\bar l}+\text{Ric}_{m\bar l}-\text{Ric}^{(4)}_{m\bar l}\\[2mm]
				&-\alpha^{j\bar k}\alpha^{s\bar r}T_{mj\bar r}\bar{T}_{\bar l\bar k s})
				+\sqrt{-1}\frac{(n-2)!}{2}(2R-2R^{(3)}-|T|^2)\alpha_{m\bar l}.
			\end{split}
		\end{equation*}
		\item If $n\geq 4$,
		\begin{equation*}
			\begin{split}
				&*(\ddbar(\alpha^{n-2}))_{m\bar l}\\[2mm]
				=& \sqrt{-1}(n-2)!(\text{Ric}^{(2)}_{m\bar l}-\text{Ric}^{(3)}_{m\bar l}+\text{Ric}_{m\bar l}-\text{Ric}^{(4)}_{m\bar l}\\[2mm]
				&-\alpha^{j\bar k}\alpha^{s\bar r}T_{mj\bar r}\bar{T}_{\bar l\bar k s})
				+\sqrt{-1}\frac{(n-2)!}{2}(2R-2R^{(3)}-|T|^2)\alpha_{m\bar l}\\[2mm]
				&+\frac{(n-2)!}{2}\alpha^{p\bar q}\alpha^{s\bar r} (T\wedge \bar{T})_{s\bar rq\bar pm\bar l}
				+\sqrt{-1}\frac{(n-2)!}{6}(3|T|^2-2|\tau|^2)\alpha_{m\bar l}.
			\end{split}
		\end{equation*}
	\end{enumerate}
\end{proposition}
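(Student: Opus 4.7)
The plan is to compute $\ddbar(\alpha^{n-2})$ by a Leibniz expansion, apply the Hodge star of $\alpha$, and then read off the resulting index contractions as the four Chern--Ricci tensors $\text{Ric}, \text{Ric}^{(2)}, \text{Ric}^{(3)}, \text{Ric}^{(4)}$ together with torsion contractions. Throughout I would work at a fixed point $p$ in unitary coordinates $\alpha_{i\bar j}(p)=\delta_{ij}$, which reduces the Hodge star to its flat-model form and lets the first-order derivatives $\partial_k\alpha_{i\bar j}(p)$ be read directly as components of the torsion $T=\partial\alpha$.

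First I would apply the Leibniz rule to get
\begin{equation*}
\ddbar(\alpha^{n-2}) = (n-2)\,\ddbar\alpha\wedge\alpha^{n-3} + (n-2)(n-3)\,\sqrt{-1}\,\partial\alpha\wedge\dbar\alpha\wedge\alpha^{n-4},
\end{equation*}
where the second term vanishes automatically when $n=3$; this is the structural source of the dichotomy between items (1) and (2). The first summand produces the Ricci-curvature part of the formula together with a quadratic-in-torsion correction coming from the non-linear piece of the Chern curvature, while the second summand contributes the pure $T\wedge\bar T$ tensor appearing only in item (2).

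For the first summand I would use the Chern-curvature identity
\begin{equation*}
\partial_k\partial_{\bar l}\alpha_{i\bar j} = -R_{i\bar j k\bar l} + \alpha^{p\bar q}\partial_{\bar l}\alpha_{p\bar j}\partial_k\alpha_{i\bar q},
\end{equation*}
substitute into $\ddbar\alpha$, and apply the Hodge star wedged with $\alpha^{n-3}$. Using the standard monomial identity at $p$ for $*\bigl(\sqrt{-1}\,dz_i\wedge d\bar z_j\wedge\alpha^{n-2}\bigr)$, the four possible pairings of the indices $i,j,k,l$ of $R$ against the two partial traces hidden inside $\alpha^{n-3}$ produce exactly the four tensors $\text{Ric}, \text{Ric}^{(2)}, \text{Ric}^{(3)}, \text{Ric}^{(4)}$ together with the scalars $R$ and $R^{(3)}$ attached to $\alpha_{m\bar l}$. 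The torsion correction in the curvature identity, after the same Hodge-star calculation, delivers the $-\alpha^{j\bar k}\alpha^{s\bar r}T_{mj\bar r}\bar T_{\bar l\bar k s}$ contribution and part of the $|T|^2\alpha_{m\bar l}$ scalar which is common to both items.

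For item (2) I would additionally compute $*\bigl(\sqrt{-1}\,\partial\alpha\wedge\dbar\alpha\wedge\alpha^{n-4}\bigr)$ directly from the expansion $T=\frac{1}{2}T_{sj\bar k}\,dz_s\wedge dz_j\wedge d\bar z_k$. Expanding the triple wedge and contracting against $\alpha^{n-4}$ splits the result naturally into an untraced piece $\alpha^{p\bar q}\alpha^{s\bar r}(T\wedge\bar T)_{s\bar r q\bar p m\bar l}$ and purely scalar torsion pieces proportional to $|T|^2$ and $|\tau|^2$. The main obstacle is combinatorial bookkeeping: one must track overlapping index contractions and sign conventions carefully to recover the specific coefficients $\frac{(n-2)!}{2}$ and $\frac{(n-2)!}{6}$, and in particular the precise linear combination $3|T|^2-2|\tau|^2$. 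To cut down the risk of algebraic error I would first specialise to the conformally balanced case, where the formula is already known from Phong--Picard--Zhang, and then track the additional contributions produced by dropping the balanced assumption.
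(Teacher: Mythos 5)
Your plan follows the same route as the paper: the Leibniz expansion of $\ddbar(\alpha^{n-2})$ into $(n-2)\,\ddbar\alpha\wedge\alpha^{n-3}$ plus $(n-2)(n-3)\sqrt{-1}\,\partial\alpha\wedge\dbar\alpha\wedge\alpha^{n-4}$, Hodge-star contraction formulas for $(2,2)$- and $(3,3)$-forms wedged with powers of $\alpha$, and the identities expressing the contractions of $\ddbar\alpha$ and $T\wedge\bar{T}$ through the four Chern--Ricci tensors, the scalars $R$, $R^{(3)}$, $|T|^2$, $|\tau|^2$ and the quadratic torsion term. The only real difference is that the paper quotes these star and trace identities directly from Phong--Picard--Zhang (their Lemma 2 and accompanying computations) rather than rederiving them in unitary coordinates, so the combinatorial bookkeeping you flag as the main risk is precisely what that citation replaces.
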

		\medspace
\begin{proof}
We have the following formula in \cite[Lemma 2]{PPZ19}
	\begin{equation}\label{star operator}
		\begin{split}
			(*(\Phi\wedge\alpha^{n-3}))_{k\bar l}=&\sqrt{-1}(n-3)!\alpha^{s\bar r}\Phi_{s\bar r k\bar l}\\[2mm]
			&+\sqrt{-1}\frac{(n-3)!}{2}(\text{tr}\Phi)\alpha_{k\bar l}, n\geq 3;\\[2mm]
			(*(\Psi\wedge\alpha^{n-4}))_{k\bar l}=&\frac{\sqrt{-1}(n-4)!}{2}\alpha^{s\bar r}\alpha^{p\bar q}\Phi_{s\bar r p\bar q k \bar l}\\[2mm]
			&+\sqrt{-1}\frac{(n-4)!}{6}(\text{tr}\Phi)\alpha_{k\bar l}, n\geq 4,
		\end{split}
	\end{equation}
	where $\Phi$ is a $(2,2)$ form and $\Psi$ is a $(3,3)$ form.
	Note 
	\begin{equation*}
		\ddbar \alpha^{n-2}=(n-2)\ddbar \alpha\wedge \alpha^{n-3}+(n-2)(n-3)\sqrt{-1}\partial \alpha\wedge \bar{\partial }\alpha\wedge \alpha^{n-4}.\\[1mm]
	\end{equation*}
	
	Now we calculate $\alpha^{j\bar k}(\ddbar \alpha)_{j\bar km\bar l }, \text{tr}(\ddbar \alpha), 	\alpha^{p\bar q}\alpha^{s\bar r} (T\wedge \bar{T})_{s\bar rq\bar pj\bar k}$ and $	\text{tr} T \wedge T$(see \cite{PPZ19}):
	\begin{equation*}
		\begin{split}
		\alpha^{j\bar k}(\ddbar \alpha)_{j\bar km\bar l }=&\text{Ric}^{(2)}_{m\bar l}-\text{Ric}^{(3)}_{m\bar l}+\text{Ric}_{m\bar l}-\text{Ric}^{(4)}_{m\bar l}
		-\alpha^{j\bar k}\alpha^{s\bar r}T_{mj\bar r}\bar{T}_{\bar l\bar k s},\\[2mm]
	\alpha^{p\bar q}\alpha^{s\bar r} (T\wedge \bar{T})_{s\bar rq\bar pj\bar k}=&\alpha^{p\bar q}\alpha^{s\bar r}(2T_{sj\bar p}\bar{T}_{\bar r\bar k q}+T_{qs \bar k}T_{\bar p\bar r j})-2\alpha^{s\bar r}(T_{js \bar k}T_{\bar r}+T_s\bar{T}_{ \bar k\bar r j})\\[2mm]
	&-2T_{j}\bar T_{\bar k}, 
	\end{split}
\end{equation*}
and 
	\begin{equation*}
		\text{tr}(\ddbar \alpha)=2R-2R^{(3)}-|T|^2, \,\,\,\,	\text{tr} T \wedge T=(3|T|^2-2|\tau|^2).
	\end{equation*}
Using \eqref{star operator}, we have
		\begin{equation*}
			\begin{split}
			(*(\ddbar\alpha\wedge\alpha^{n-2}))_{m\bar l}=&\sqrt{-1}(n-3)!(\text{Ric}^{(2)}_{m\bar l}-\text{Ric}^{(3)}_{m\bar l}+\text{Ric}_{m\bar l}-\text{Ric}^{(4)}_{m\bar l}\\[2mm]
			&-\alpha^{j\bar k}\alpha^{s\bar r}T_{mj\bar r}\bar{T}_{\bar l\bar k s})
			+\sqrt{-1}\frac{(n-3)!}{2}(2R-2R^{(3)}\\[2mm]
			&-|T|^2)\alpha_{m\bar l}.
			\end{split}
			\end{equation*}
			and
			\begin{equation*}
				\begin{split}
			(*\sqrt{-1}\partial \alpha\wedge \bar{\partial }\alpha\wedge \alpha^{n-4})_{m\bar l}=&\frac{(n-4)!}{2}\alpha^{p\bar q}\alpha^{s\bar r} (T\wedge \bar{T})_{s\bar rq\bar pm\bar l}\\[2mm]
			&+\sqrt{-1}\frac{(n-4)!}{6}(3|T|^2-2|\tau|^2)\alpha_{m\bar l}.
			\end{split}
			\end{equation*}
Then if n=3, 
	\begin{equation*}
				\begin{split}
				&(*(\ddbar(\alpha^{n-2})))_{m\bar l}\\[2mm]
				=& \sqrt{-1}(n-2)!(\text{Ric}^{(2)}_{m\bar l}-\text{Ric}^{(3)}_{m\bar l}+\text{Ric}_{m\bar l}-\text{Ric}^{(4)}_{m\bar l}\\[2mm]
				&-\alpha^{j\bar k}\alpha^{s\bar r}T_{mj\bar r}\bar{T}_{\bar l\bar k s})
				+\sqrt{-1}\frac{(n-2)!}{2}(2R-2R^{(3)}-|T|^2)\alpha_{m\bar l}\\[2mm]
				\end{split}
				\end{equation*}
				and if
				$n=4$
							\begin{equation*}
					\begin{split}
						&(*(\ddbar(\alpha^{n-2})))_{m\bar l}\\[2mm]
						=& \sqrt{-1}(n-2)!(\text{Ric}^{(2)}_{m\bar l}-\text{Ric}^{(3)}_{m\bar l}+\text{Ric}_{m\bar l}-\text{Ric}^{(4)}_{m\bar l}\\[2mm]
						&-\alpha^{j\bar k}\alpha^{s\bar r}T_{mj\bar r}\bar{T}_{\bar l\bar k s})
						+\sqrt{-1}\frac{(n-2)!}{2}(2R-2R^{(3)}-|T|^2)\alpha_{m\bar l}\\[2mm]
						&+\frac{(n-2)!}{2}\alpha^{p\bar q}\alpha^{s\bar r} (T\wedge \bar{T})_{s\bar rq\bar pm\bar l}
						+\sqrt{-1}\frac{(n-2)!}{6}(3|T|^2-2|\tau|^2)\alpha_{m\bar l}.
					\end{split}
				\end{equation*}

	\end{proof}

	\section{The $C^{0}$ estimates}
	In this section, we will prove the $C^{0}$ estimates.

			\subsection{The upper bound for $\vp$}

	In this subsection, we  will prove the supremum estimates. Fu-Yau \cite{FY} proved there exists a large positive solution for Fu-Yau equation in complex surface.  In  higher dimensions, it was proved in \cite{CHZ19, CHZ192,PPZ17, PPZ21}. For the form type Calabi-yau equation, we prove that the solution are "nearly non-positive" if the astheno-Ricci curvature is non-positive.
		\begin{proposition}\label{upper bound u}
		Suppose that \begin{equation*}
			\|e^{\vp}\|_{L^{1}(M, \alpha)}=A
		\end{equation*} 
		and $$\sup_{M} \vp\leq 1,$$ 
		then we have
		\begin{equation}
			\sup_{M} \vp\leq M_0A^{\frac{1}{n+1}}.
		\end{equation}
		where $M_0$ is  a uniform constant depending on $\alpha,\omega_0$ and $n$.
	\end{proposition}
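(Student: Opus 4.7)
The plan is to run a Moser iteration on $\vp_+$, using the astheno-Ricci hypothesis to give the zero-order term a favorable sign and the hypothesis $\sup_M \vp \le 1$ to convert the $L^1$-bound $A$ into an $L^{n+1}$-bound. First, I would reformulate using \eqref{omega formula}, $\omega = \chi+\ddbar\vp+W(\partial\vp)+\vp B$. Tracing with respect to $\alpha$ and using $\text{tr}_\alpha\omega = \text{tr}_\alpha\tilde\omega > 0$, one gets
\begin{equation*}
\Delta\vp \ge -C(1+|\nabla\vp|_\alpha) - \vp\,\text{tr}_\alpha B,
\end{equation*}
and a direct computation with \eqref{trans} gives $\text{tr}_\alpha B = \text{tr}_\alpha\tilde B$, so the astheno-Ricci hypothesis $\tilde B\le 0$ yields $\text{tr}_\alpha B \le 0$. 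Hence on $\{\vp\ge 0\}$ the zero-order contribution enters with a favorable sign, and $\vp_+$ satisfies a subharmonic-type differential inequality that is well adapted to Moser iteration.

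Second, I would run the iteration on $\vp_+$. Testing against $\vp_+^{p-1}$, integrating by parts, and absorbing the first-order term via Cauchy--Schwarz, one obtains
\begin{equation*}
\int_M |\nabla\vp_+^{p/2}|_\alpha^2\,\alpha^n \le C(p) \int_M (\vp_+^{p-1} + \vp_+^p)\,\alpha^n.
\end{equation*}
The Sobolev inequality on $(M,\alpha)$ with exponent $\sigma = n/(n-1)$ produces an iteration in $L^{p\sigma^k}$-norms, and following the Fu--Yau bookkeeping from \cite{FY} one arrives at $\sup_M \vp_+ \le C\,\|\vp_+\|_{L^{n+1}(\alpha^n)}$, where the exponent $n+1$ is the one dictated by the combined Monge--Amp\`ere structure and complex dimension of $M$.

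Third, I would convert the $L^{n+1}$-bound into a bound in terms of $A$. The hypothesis $\sup_M\vp\le 1$ gives $\vp_+^{n+1}\le\vp_+$ pointwise, so $\|\vp_+\|_{L^{n+1}}^{n+1}\le\|\vp_+\|_{L^1}$. On $\{\vp\ge 0\}$ the elementary inequality $\vp\le e^\vp-1$ gives
\begin{equation*}
\|\vp_+\|_{L^1(\alpha^n)}\le \int_{\{\vp\ge 0\}}(e^\vp-1)\,\alpha^n\le \int_M e^\vp\,\alpha^n = A,
\end{equation*}
so $\|\vp_+\|_{L^{n+1}}\le A^{1/(n+1)}$ and $\sup_M\vp\le M_0 A^{1/(n+1)}$ follows.

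The main obstacle is organizing the iteration carefully in the non-K\"ahler setting: each integration by parts produces torsion-commutator terms from $\partial\alpha$ and $\bar\partial\alpha$, and the zero-order term $\vp B$ has the wrong sign on $\{\vp<0\}$. The hypothesis $\sup\vp\le 1$ is essential both for uniformly bounding the coefficient $\vp B$ and for turning $L^1$ control into $L^{n+1}$ control, while the astheno-Ricci non-positivity is what closes the iteration on $\{\vp\ge 0\}$. Pinning down the precise exponent $n+1$, rather than a weaker power, is the delicate point in an otherwise standard Moser argument.
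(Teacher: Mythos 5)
Your overall strategy (Moser iteration on $\vp_{+}$, using $\sup_M\vp\le 1$ and the normalization $\int_M e^{\vp}\alpha^n=A$) is the same as the paper's, but the key intermediate claim of your second step is a genuine gap. The inequality $\int_M|\nabla\vp_+^{p/2}|^2\,\alpha^n\le C(p)\int_M(\vp_+^{p-1}+\vp_+^{p})\,\alpha^n$ is \emph{not} homogeneous: after using $\vp_+\le 1$ the left side has degree $p$ and the right side degree $p-1$, so Sobolev gives the recursion $\|\vp_+\|_{L^{\frac{n}{n-1}(k+1)}}\le (C(k+1)^2)^{1/(k+1)}\|\vp_+\|_{L^{k}}^{k/(k+1)}$, and iterating from a starting exponent $k_1$ yields $\sup_M\vp_+\le C\|\vp_+\|_{L^{k_1}}^{k_1/(k_1+n)}$, because the product of the sublinear exponents $k_i/(k_i+1)$ converges to $k_1/(k_1+n)$. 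In particular the linear bound $\sup_M\vp_+\le C\|\vp_+\|_{L^{n+1}}$ does not follow: starting at $k_1=n+1$ only gives the exponent $(n+1)/(2n+1)$, which combined with $\|\vp_+\|_{L^{n+1}}\le A^{1/(n+1)}$ yields $\sup\vp\le CA^{1/(2n+1)}$, strictly weaker than the asserted $CA^{1/(n+1)}$. Worse, the linear bound is false under the hypotheses you actually use (ellipticity plus $\sup\vp\le 1$): a smooth bump of height $\epsilon r^2$ supported in a coordinate ball of radius $r$ satisfies $\omega_0^{n-1}+\ddbar(\vp\,\alpha^{n-2})>0$ for small $\epsilon$, yet $\sup\vp_+/\|\vp_+\|_{L^{n+1}}\sim r^{-2n/(n+1)}\to\infty$ as $r\to0$.

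The repair is precisely the paper's bookkeeping: start the iteration at $k_1=1$, obtaining $\sup_M\vp_+\le C\|\vp_+\|_{L^{1}}^{1/(n+1)}$ directly, and then use $\vp_+\le e^{\vp}$ to get $\|\vp_+\|_{L^1}\le A$; the exponent $1/(n+1)$ comes from the limit $k_1/(k_1+n)$ with $k_1=1$, not from an $L^{n+1}$ norm, and no detour through $\vp_+^{n+1}\le\vp_+$ is needed (the hypothesis $\sup\vp\le1$ is only used to replace $\int\vp_+^{k+1}$ by $\int\vp_+^{k}$ in the torsion terms). A further, harmless, misreading: the astheno-Ricci sign condition is not used in this proposition at all. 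The paper tests the form inequality $\int_M\vp_+^{k}\bigl(\omega_0^{n-1}+\ddbar(\vp\,\alpha^{n-2})\bigr)\wedge\alpha\ge0$ and bounds all torsion terms in absolute value by $Ck\int_M\vp_+^{k}\,\alpha^n$, so your reliance on $\mathrm{tr}_\alpha B\le0$ to ``close the iteration'' is unnecessary, even though the identity $\mathrm{tr}_\alpha B=\mathrm{tr}_\alpha\tilde B$ and its non-positivity are themselves correct.
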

	
	\begin{proof}
		Set $\vp_{+}=\max\{\vp, 0\}$. It is easy to see that 
		\begin{equation}\label{positive}
			\int_{M}\vp_{+}\alpha^n\leq  \int_{M}e^{\vp}\alpha^n=A.
		\end{equation}
	By the elliptic condition, we have
		\begin{equation*}
			\begin{split}
				&\int_{M}\vp_{+}^{k}\Big(\omega_0^{n-1} +\ddbar (\vp\alpha^{n-2})\Big)\wedge \alpha\geq 0.
			\end{split}
		\end{equation*}
		Using integration by parts and the Cauchy-Schwarz inequality, 
		\begin{equation*}
			\begin{split}
				&\int_{M}\vp_{+}^k\ddbar(\vp \alpha^{n-2})\wedge \alpha\\
				=&-\frac{4k}{n(k+1)^2}\int_{M}|\partial \vp_{+}^{\frac{k+1}{2}}|^2_{\alpha}\,\alpha^n-k\int_{M}\vp_{+}^{k}\sqrt{-1}\partial (\vp)\wedge\bar{\partial}(\alpha^{n-2})\wedge  \alpha\\
				& -\int_{M}\vp_{+}^k\sqrt{-1}\partial \alpha \wedge \bar{\partial}\vp\wedge \alpha^{n-2}-\int_{M}\vp_{+}^{k+1}\, \sqrt{-1 }\partial \alpha\wedge \bar{\partial} (\alpha^{n-2})\\
				\leq& -\frac{2k}{n(k+1)^2}\int_{M}|\partial \vp_{+}^{\frac{k+1}{2}}|^2_{\alpha}\alpha^n+Ck\int_{M}\vp_{+}^{k+1}\,\alpha^{n}\\
				\leq& -\frac{2k}{n(k+1)^2}\int_{M}|\partial \vp_{+}^{\frac{k+1}{2}}|^2_{\alpha}\,\alpha^n+Ck\int_{M}\vp_{+}^{k}\,\alpha^{n},
			\end{split}
		\end{equation*}
		where in the last inequality we used $\vp\leq 1.$
		Then
		\begin{equation*}
			\frac{2k}{n(k+1)^2}\int_{M}|\partial \vp_{+}^{\frac{k+1}{2}}|^2_{\alpha}\alpha^n\leq Ck\int_{M}\vp_{+}^{k}\alpha^{n}.
		\end{equation*}
		
		It follows, by Sobolev inequality, 
		\begin{equation*}
			\|\vp_{+}^{\frac{(k+1)}{2}}\|_{L^{\frac{2n}{n-1}}}\leq \|\partial \vp_{+}^{\frac{k+1}{2}}\|_{L^{2}}\leq (C(k+1)^2)^{\frac{1}{2}}(\int_{M}\vp_{+}^{k}\alpha^n)^{\frac{1}{2}}.
		\end{equation*}
		Therefore,
		\begin{equation*}
			\|\vp_{+}\|_{L^{\frac{n(k+1)}{n-1}}}\leq (C(k+1)^2)^{\frac{1}{k+1}}\|\vp_{+}\|_{L^{k}}^{\frac{k}{k+1}}, k\geq 1.
		\end{equation*}
	Choose $k_{1}=1, k_{i}=\frac{n}{n-1}(k_{i-1}+1)$. It is easy to see that 
		\begin{equation}\label{ki}
			k_{i}=(\frac{n}{n-1})^{i-1}(k_1+n)-n.
		\end{equation}
		Then we have
		\begin{equation}\label{iteration}
			\begin{split}
				&\|\vp_{+}\|_{L^{k_{i+1}}}\leq ((C(k_i+1))^{\frac{1}{k_i+1}}\|\vp_{+}\|_{L^{k_i}}^{\frac{k_i}{k_i+1}}\\
				\leq & ((C(k_i+1))^{\frac{1}{k_i+1}}((C(k_{i-1}+1))^{\frac{k_i}{k_i+1}\frac{1}{k_{i-1}+1}}\|\vp_{+}\|_{L^{k_{i-2}}}^{\frac{k_i}{k_i+1}\frac{k_{i-1}}{k_{i-1}+1}}\\
				\leq & ((C(k_i+1))^{\frac{1}{k_i+1}}((C(k_{i-1}+1))^{\frac{k_i}{k_i+1}\frac{1}{k_{i-1}+1}}\cdots (C(k_1+1))^{\frac{k_i}{k_i+1}\frac{k_{i-1}}{k_{i-1}+1}\cdots \frac{1}{k_1+1}}\\
				&\|\vp_{+}\|_{L^{k_{1}}}^{\frac{k_i}{k_i+1}\frac{k_{i-1}}{k_{i-1}+1}\cdots \frac{k_1}{k_1+1}}\\
				=& C^{\frac{1}{k_{i+1}}(\frac{n}{n-1}+(\frac{n}{n-1})^2+\cdots+(\frac{n}{n-1})^i )}(k_i+1)^{\frac{1}{k_i+1}}(k_{i-1}+1)^{\frac{1}{k_i+1}\frac{n}{n-1}}\cdots \\ & (k_1+1)^{\frac{1}{k_i+1}(\frac{n}{n-1})^{i-1}}
				\|\vp_{+}\|_{L^{1}}^{\frac{k_i}{k_i+1}\frac{k_{i-1}}{k_{i-1}+1}\cdots \frac{k_1}{k_1+1}}.
			\end{split}   
		\end{equation}
		Using $k_{i}=\frac{n}{n-1}(k_{i-1}+1)$ and \eqref{ki} , we have
		\begin{equation}
			\frac{k_i}{k_i+1}\frac{k_{i-1}}{k_{i-1}+1}\cdots \frac{k_1}{k_1+1}=\frac{k_1}{(\frac{n-1}{n})^{i-1}k_{i+1}}\rightarrow \frac{k_1}{k_1+n}, \,\,\text{when\,\,} i\rightarrow \infty.
		\end{equation}
		In addition, we have
		
		\begin{equation}
			\begin{split}
				&C^{\frac{1}{k_i+1}+\frac{k_i}{k_i+1}\frac{1}{k_{i-1}+1}+\cdots+\frac{k_i}{k_i+1}\frac{k_{i-1}}{k_{i-1}+1}\cdots \frac{1}{k_1+1}}\\
				=&C^{\frac{1}{k_{i+1}}(\frac{n}{n-1}+(\frac{n}{n-1})^2+\cdots+(\frac{n}{n-1})^i )}\\
				=&C^{\frac{n((\frac{n}{n-1})^{i}-1)}{(\frac{n}{n-1})^{i}(k_1+n)-n}}\rightarrow
				C^{\frac{n}{(k_1+n)}}, \text{\,when\,} i\rightarrow \infty.
			\end{split}
		\end{equation}
		Using \eqref{ki}, we have 
		\begin{equation}
			\begin{split}
				&(k_i+1)^{\frac{1}{k_i+1}}(k_{i-1}+1)^{\frac{k_i}{k_i+1}\frac{1}{k_{i-1}+1}}\cdots (k_1+1)^{\frac{k_i}{k_i+1}\frac{k_{i-1}}{k_{i-1}+1}\cdots \frac{1}{k_1+1}}\\
				=&  (k_i+1)^{\frac{1}{k_i+1}}(k_{i-1}+1)^{\frac{1}{k_i+1}\frac{n}{n-1}}\cdots (k_1+1)^{\frac{1}{k_i+1}(\frac{n}{n-1})^{i-1}}\\
				=&e^{\sum_{a=0}^{i-1}\frac{(\frac{n}{n-1})^{a}\ln(k_{i-a}+1)}{k_i+1}}<C, \text{\,\,when\,\,} i\rightarrow \infty.
			\end{split}
		\end{equation}
		By \eqref{positive},	letting $i\rightarrow \infty$ in \eqref{iteration}, we have
		\begin{equation}
			\|\vp_{+}\|_{L^{\infty}}\leq C\|\vp_{+}\|_{L^{1}}^{\frac{1}{n+1}}\leq CA^{\frac{1}{n+1}}.
		\end{equation}
		
	\end{proof}

	\subsection{The lower bound for $\vp$}
	
		First  we use  Sz\'ekelyhidi 's argument \cite{Szekelyhidi18}, to prove the $L^{1}$ estimates. Consider the operator
	\begin{equation}\label{Delta}
		\Delta \psi:=\Delta_{\alpha} \psi+\text{tr}_{\alpha}W(\psi)+ (\text{tr}_{\alpha}B) \vp,
	\end{equation}
	where $\Delta_{\alpha}\psi=\alpha^{i\bar j}\psi_{i\bar j}$ is the Chern Laplace operator, $\text{tr}_{\alpha}W(\psi), \text{tr}_{\alpha}B$ are the trace of $W(\psi)$, $B$, respectively,  and $(\alpha^{i\bar j})$ is the inverse matrix of $(\alpha_{i\bar j})$. 

	\begin{proposition}\label{L1}
		Let $(M,\alpha)$ be a compact Hermitian manifold with $\int_{M}\alpha^n=1$.  Suppose that $\vp$ satisfies
		\begin{equation}\label{norm}
			\int_{M}e^{\vp}\,\alpha^n=A, \quad \Delta \vp \geq -C_0
		\end{equation}
		and $\sup\vp\leq 1$
		for some constant $C_0$. Then there exists a constant $C$ depending on $ A, C_0,\omega_0,h$ and $(M, \alpha)$ such that
		\begin{equation*}
			\int_{M}|\vp|\,\alpha^{n}\leq C.
		\end{equation*}
	\end{proposition}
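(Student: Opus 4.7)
The plan is to split the desired integral as $\int_M |\vp|\,\alpha^n = \int_M \vp_+\,\alpha^n + \int_M \vp_-\,\alpha^n$ and bound the two parts separately. The positive part is immediate from the hypothesis $\sup \vp \leq 1$: after normalizing so that $\int_M \alpha^n = 1$, one simply has $\int_M \vp_+\,\alpha^n \leq 1$. The serious issue is the negative part, which I would control by showing that the average $\bar\vp := \int_M \vp \,\alpha^n$ is bounded from below.

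For the lower bound on $\bar\vp$, I would first exploit the normalization $\|e^\vp\|_{L^1}=A$ together with $\sup \vp \leq 1$ to trap $\sup\vp$ in a bounded interval. Indeed, the trivial estimate $A = \int e^\vp \leq e^{\sup\vp}$ gives $\sup\vp \geq \log A$, so $\log A \leq \sup \vp \leq 1$. It therefore suffices to upgrade the pointwise hypothesis $\Delta \vp \geq -C_0$ into the integral comparison $\sup\vp - \bar\vp \leq C$; combined with the above this yields $\bar\vp \geq \log A - C$, and then
\begin{equation*}
\int_M \vp_-\,\alpha^n = \int_M \vp_+\,\alpha^n - \int_M \vp\,\alpha^n \leq 1 - \bar\vp \leq 1 + C - \log A,
\end{equation*}
which closes the estimate.

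The oscillation bound $\sup\vp - \bar\vp \leq C$ is a classical consequence of Green's function for the Chern Laplacian $\Delta_\alpha$ (replacing $\alpha$ by a Gauduchon representative in its conformal class if necessary, so that $\int_M \Delta_\alpha u \,\alpha^n = 0$ for smooth $u$): one has $G(x,y) \geq 0$ with $\|G\|_\infty \leq K$, and the representation formula together with a pointwise lower bound on $\Delta_\alpha \vp$ delivers the oscillation estimate. To apply this here I would rewrite the hypothesis as
\begin{equation*}
\Delta_\alpha \vp \geq -C_0 - \mathrm{tr}_\alpha W(\partial \vp) - (\mathrm{tr}_\alpha B)\,\vp,
\end{equation*}
and then handle the two extra terms separately. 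The first-order nuisance $\mathrm{tr}_\alpha W(\partial\vp)$ I would move off $\vp$ via integration by parts against $G(x,y)$ (using smoothness of $G$ off the diagonal), producing only a zeroth order term in $\vp$. The genuinely delicate term is $(\mathrm{tr}_\alpha B)\vp$, because this is not bounded where $\vp$ is very negative; here the hypothesis enters crucially: non-positivity of the astheno-Ricci tensor forces $\tilde B \leq 0$, and the identities \eqref{trans}-\eqref{notation} then give $\mathrm{tr}_\alpha B = \mathrm{tr}_\alpha \tilde B \leq 0$. Thus $(\mathrm{tr}_\alpha B)\vp$ has a definite sign on each of $\{\vp \geq 0\}$ and $\{\vp < 0\}$, and the unfavourable contribution $\int G \cdot (\mathrm{tr}_\alpha B)\vp$ over $\{\vp < 0\}$ is absorbed back into the quantity $\int \vp_-\,\alpha^n$ we are trying to estimate, closing the estimate by a rearrangement.

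The main obstacle I expect is precisely this absorption step: the zero-order coefficient $\mathrm{tr}_\alpha B$ is not strictly negative, only non-positive, so the operator $L u = \Delta_\alpha u + \mathrm{tr}_\alpha W(\partial u) + (\mathrm{tr}_\alpha B) u$ does not automatically admit a positive Green's function of its own, and the bookkeeping needs to be done against the Chern Green's function $G$ of $\alpha$ while carefully tracking the universal constants (which may ultimately depend on $A$ through $\log A$, as reflected in the statement). Everything else -- the splitting, the Jensen-type lower bound on $\sup \vp$, and the final arithmetic -- is routine once the oscillation bound is established.
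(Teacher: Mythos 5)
Your reduction to an oscillation bound is fine as far as it goes: $\int_M\vp_+\,\alpha^n\le 1$ is immediate from $\sup\vp\le1$ and $\int_M\alpha^n=1$, the bound $\sup\vp\ge\log A$ follows from the normalization, and $\operatorname{tr}_\alpha B=\operatorname{tr}_\alpha\tilde B\le0$ is correct. The genuine gap lies in the two steps you yourself single out, and neither closes as described. First, integrating by parts against $G(x,y)$ does not render the gradient term harmless: you are left with terms of the form $\int_M\vp(y)\,\nabla_y\bigl(G(x,y)W\bigr)\,\alpha^n(y)$, whose kernel $|\nabla_yG(x,y)|\sim d(x,y)^{1-2n}$ is integrable but unbounded, so they are controlled neither by $\int_M|\vp|\,\alpha^n$ (the very quantity you are estimating) nor by $\sup|\vp|$ (which you lack on the negative side); an $L^p$ bound with $p>1$ would be needed and is not available. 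Second, the absorption of the zero-order contribution fails for the same reason and for lack of smallness: at the maximum point $x_0$ of $\vp$ the unfavourable term is $\int_MG(x_0,y)\,|\operatorname{tr}_\alpha B|(y)\,\vp_-(y)\,\alpha^n(y)$, and since $G(x_0,\cdot)$ is unbounded this is not dominated by $C\int_M\vp_-\,\alpha^n$; even pretending $G$ were bounded, absorbing $C\int_M\vp_-\,\alpha^n$ into the left-hand side would require $C<1$, and there is no reason for $\|G(x_0,\cdot)\|_{L^1}\,\sup|\operatorname{tr}_\alpha B|$ to be small. Applying the representation formula to $v=\sup\vp-\vp$ instead does not help: one again meets $\int_M G(x_0,y)\,v(y)\,\alpha^n(y)$, which is not controlled by $\int_M v\,\alpha^n$.

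The paper avoids all of this by arguing locally rather than through a global kernel. Since $\log A\le\sup\vp\le1$ (and Proposition \ref{upper bound u} applies), one has $|\sup_M\vp|\le C$, and the function $v=\sup_M\vp-\vp\ge0$ satisfies
\begin{equation*}
\alpha^{i\bar j}v_{i\bar j}+\operatorname{tr}_\alpha W(\partial v)+(\operatorname{tr}_\alpha B)\,v\;\le\;C_0+(\operatorname{tr}_\alpha B)\sup_M\vp\;\le\;C,
\end{equation*}
where the zeroth-order coefficient $\operatorname{tr}_\alpha B\le0$ has exactly the sign required by the local weak Harnack inequality for nonnegative supersolutions of operators with bounded drift; this is precisely why the proof records $\alpha^{i\bar j}B_{i\bar j}\le0$. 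The weak Harnack estimate on balls, chained over a finite cover together with $\inf_Mv=0$ (attained at the maximum point of $\vp$), yields $\int_Mv\,\alpha^n\le C$ --- this is the argument of \cite{Szekelyhidi18} invoked in the paper --- and the $L^1$ bound follows. To salvage your global Green's-function scheme you would need a positive kernel adapted to the full operator $\Delta_\alpha+\operatorname{tr}_\alpha W\cdot\nabla+\operatorname{tr}_\alpha B$, or a genuinely new absorption mechanism; as written, the key estimate is missing.
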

	
	\begin{proof}
		By \eqref{norm}, $\sup_{M}e^{\vp}\geq A$, which implies 
		\begin{equation}\label{sup inf}
		\sup\vp\geq \ln A
		\end{equation}
		Using Proposition \ref{upper bound u}, $|\sup_{M}\vp|\leq C$.
	Note $\alpha^{i\bar j}B_{i\bar j}\leq0$. 
	By the argument in \cite{Szekelyhidi18}, using weakly Harnack inequality, we can prove  a bound on the integral  of
	$|\sup\vp-\vp|$.  This implies Proposition \ref{L1}.

	\end{proof}

In the following, the infimum estimates will be proved. We follow the arguments of \cite{Szekelyhidi18}, which are generalizations of the arguments in \cite{Blocki05,Blocki11}. In \cite{GM}, George also proved
a similar result. 
	\begin{proposition}\label{Prop32}
		 Let $\vp$ be a smooth solution of \eqref{n-1 CY equation} and $\sup\vp\leq 1$. Suppose \[*(\ddbar \alpha)^{n-2}\leq0.\]
		 Then there exists a constant $C$ depending on $A, \|h\|_{C^{0}}$, $\|\omega_0\|_{C^{0}}$ and $(M,\alpha)$ such that
		\begin{equation}\label{inf} 
			\inf_{M} \vp\geq -C.
		\end{equation}
	\end{proposition}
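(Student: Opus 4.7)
The plan is to adapt the Alexandrov--Bakelman--Pucci (ABP) argument of Sz\'ekelyhidi, which itself generalizes Blocki, to the form-type Calabi--Yau equation rewritten in the form \eqref{n-1 psh}. The essential new feature we must handle is the zero-order term $\vp\tilde{B}$ that appears in the transformed equation and that has no counterpart in the standard complex Monge--Amp\`ere setting; it is precisely here that the hypothesis $*\ddbar(\alpha^{n-2})\le 0$ (i.e.\ $\tilde{B}\le 0$) enters with a favorable sign.

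First I would argue by contradiction: suppose $\inf_M \vp = \vp(x_0) = -L$ for $L$ very large, and note that by Proposition \ref{upper bound u} combined with Proposition \ref{L1}, $\sup_M\vp\le CA^{1/(n+1)}$ and $\int_M|\vp|\,\alpha^n\le C$. Choose a coordinate ball $B_1\subset\mathbb{C}^n\cong T_{x_0}M$ centered at $x_0$ where $\alpha$ is close to the Euclidean metric, and put
\begin{equation*}
v(x)=\vp(x)+\ve|x|^2-\vp(x_0),
\end{equation*}
so that $v(0)=0$, $v\ge \ve/4$ on $\partial B_{1/2}$, while $v\le L+\ve$ throughout. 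I would then consider the lower contact set $P\subset B_{1/2}$ of $v$ (the set where $v$ agrees with its lower convex envelope) and apply the real ABP inequality to produce a lower bound of the form $c\,\ve^{2n}\le \int_P \det(D^2 v)\,dx$.

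The next step is to use ellipticity plus the equation to bound $\det(D^2 v)$ from above on $P$. On the contact set $D^2 v\ge 0$, so $\ddbar\vp\ge-\ve\,\alpha$ in a pointwise sense. Combined with $\omega=\chi+\ddbar\vp+W(\partial\vp)+\vp B>0$, this yields, via the relation $\tilde\omega=\frac{1}{n-1}((\operatorname{tr}_\alpha\omega)\alpha-\omega)$ in \eqref{tran inverse}, a two-sided control on the eigenvalues $\tilde\lambda_i$ up to contributions from $W(\partial\vp)$ and $\vp\tilde{B}$. The first-order term $W(\partial\vp)$ is absorbed on $P$ because $|\partial\vp|$ is bounded by $\sqrt{L}$ on the contact set after a standard Blocki-type argument, and the equation $\prod_i \tilde\lambda_i = e^{(n-1)(h+b)}$ gives a two-sided bound on the product. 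The key observation is that where $\vp\le 0$, the term $\vp\tilde{B}$ is non-negative by the astheno-Ricci hypothesis, hence it only helps make $\tilde\omega$ larger and does not disturb the standard Monge--Amp\`ere style estimate; this produces an upper bound $\det(D^2 v)\le C$ on $P$, independent of $L$.

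Combining the two sides of ABP with the measure of $P$ controlled using the $L^1$ bound of Proposition \ref{L1} (which forces $|\{x:\vp(x)\le -L/2\}|$ to be small in $L$), I expect to derive $\ve^{2n}\le C\,|\{v\le \ve/8\}|\le C\,L^{-1}$, contradicting the choice of $L$ large. The bound on $\inf_M\vp$ then follows. The main obstacle, as flagged above, is keeping track of the zero-order contribution $\vp\tilde{B}$ in the passage from the real Hessian estimate on $P$ to the complex eigenvalue bounds for $\tilde\omega$; the sign condition from non-positive astheno-Ricci curvature is exactly what is needed to prevent this term from spoiling the ABP estimate, and without it the argument breaks down.
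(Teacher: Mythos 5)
Your overall strategy is the same as the paper's: an ABP estimate on a contact set for $v=\vp+\ve|z|^2$, using that $\vp\le 0$ on the contact set together with $\tilde{B}\le 0$ to make the zero-order term $\vp\tilde{B}$ work in your favor, and then the $L^1$ bound of Proposition \ref{L1} to close the argument. However, there is a genuine gap in your treatment of the gradient term $Z(\partial\vp)$ (equivalently $W(\partial\vp)$). You take $P$ to be the plain lower contact set and claim that a bound $|\partial\vp|\le\sqrt{L}$ there lets you ``absorb'' this term. It does not: $Z$ is linear in $\partial\vp$, so on your $P$ it is only of size $\sqrt{L}$, and the inequality $\tilde{\omega}\ge\tfrac12\tilde{\chi}$ (hence $\tilde{\lambda}_i\ge 1/C$ uniformly) fails. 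Without a uniform positive lower bound on each $\tilde{\lambda}_i$, the equation $\prod_i\tilde{\lambda}_i=e^{(n-1)(h+b)}$ gives no upper bound on the individual eigenvalues — positivity plus a bounded product is compatible with $\tilde{\lambda}_1\to\infty$, $\tilde{\lambda}_n\to 0$ — so you cannot bound $\operatorname{tr}_{\alpha}\tilde{\omega}$, hence not $\ddbar\vp$, hence not $\det(D^2v)$, independently of $L$. If $\det(D^2v)$ is only bounded by a power of $L$, the comparison $\ve^{2n}\le C\,|P|\le C/L$ no longer yields a contradiction.

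The fix is exactly what the paper does, following Sz\'ekelyhidi's version of the ABP estimate \cite[Proposition 10]{Szekelyhidi18}: build the small-slope condition $|Dv|\le\ve/2$ into the definition of the contact set, as in \eqref{P}. Then on $P$ one has $|\partial\vp|\le C\ve$, so $Z$ is a harmless $O(\ve)$ perturbation; together with $\ddbar\vp\ge -C\ve\,\alpha$ (from $D^2v\ge 0$) and $\vp\tilde{B}\ge 0$ one gets $\tilde{\omega}\ge\tfrac12\tilde{\chi}$, and only then does \eqref{n-1 ma} give the two-sided bound $\tfrac1C\le\tilde{\lambda}_i\le C$ and thus $\det(D^2v)\le C$ on $P$. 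The small-slope condition also gives $v\le I+\ve\le 0$ on $P$ directly, which is what guarantees $\vp\le 0$ there; with your $\sqrt{L}$ gradient bound this containment still happens to hold for large $L$, but the determinant bound — the heart of the argument — does not. Your concluding step (Chebyshev on $|\{\vp\le -L/2\}|$ via Proposition \ref{L1}) is a fine variant of the paper's direct estimate $|I+\ve|\le |P|^{-1}\int_P(-v)\le C$, but it only becomes available once the determinant bound is repaired as above.
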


	\begin{proof}
		If $\inf_{M}\vp \geq 0$,  the proof is complete. Now we assume $\inf_{M} \vp<0 $
and	denote $I=\inf_{M}\vp$. Assume $I=\vp(x_0)$ and choose a local coordinate chart $(z_{1},\ldots,z_{n})$ near $x_{0}$ containing the unit ball $B_{1}(0)\subset \mathbb{R}^{2n}$ such that the point $x_{0}$ corresponds to the origin $0\in \mathbb{R}^{2n}$.
		Define $v= \vp+\ve\sum_{i=1}^{2n}|z_{i}|^{2}$	for a small $\ve>0$ . Then
		\[
		v(0) = I, \quad
		v\geq I+\ve \ \text{on $\partial B_{1}(0)$}.
		\]
Denote
		\begin{equation}\label{P}
			\begin{split}
			P=\Big\{x\in B_{1}(0) : |Dv(x)|\leq &\frac{\ve}{2},
			v(y)\geq v(x)+Dv(x)\cdot (y-x),\\
			&\ \forall\  y\in B_{1}(0)\Big\}.
			\end{split}
		\end{equation}
	Choose $\epsilon$ small enough such that 
$
		I +\epsilon\leq 0.
$
	Then for any $p\in P$,
	\begin{equation}\label{negative p}
		 v(p)\leq v(0)+|Dv| |p|\leq I+\epsilon\leq 0.
		\end{equation}
		Using  the modified Alexandroff-
		Bakelman-Pucci maximum principle \cite[Proposition 10]{Szekelyhidi18}, we have
		\begin{equation}\label{3.5}
			c_{0}\ve^{2n}\leq \int_{P}\det(D^{2}v),
		\end{equation}
		where $c_{0}$ is a constant depending only on $n$.
		
		\begin{claim}\label{bound det}
		On $P$,		$\det(D^{2}v) \leq C$.
			\end{claim}
\begin{proof}[Proof of Claim \ref{bound det}]
		Note that $v$ is a convex function on $P$ and so $D^{2}v\geq 0$ on $P$. Then
	\begin{equation}\label{zero estimate D2}
		\ddbar \vp \geq \ddbar v-C\ve\,\mathrm{Id}\geq -C\ve\,\mathrm{Id} \ \ \text{on $P$}.
	\end{equation}
	Combing with \eqref{negative p}, 	$\vp\tilde{B}\geq 0$ on $P$. If we choose $\epsilon$ small enough, then
	\[
	\tilde{\chi}+\frac{1}{n-1}((\Delta \vp)\alpha-\ddbar \vp)+Z+\vp \tilde{B}
	\geq \frac{1}{2}\tilde{\chi},\]
	which implies $\tilde{\lambda}_{i}\geq \frac{1}{C}$. By \eqref{n-1 ma}, we have
	\[
	\tilde{\chi}+\frac{1}{n-1}((\Delta \vp)\alpha-\ddbar \vp)+Z+\vp \tilde{B}
	\leq C\tilde{\chi},\]
	and so	$	|\ddbar \vp|\leq C \text{\,\,on\,\,} P.$
Then 
		\begin{equation}\label{c0 upper c2}
	|\ddbar v|\leq C  \text{\,\,on\,\,} P.
		\end{equation}
Recall that  for Hermitian matrices $A\geq 0$ and $N\geq 0$, $\det(A+N)\geq \det(A)+\det(N)$. On $P$, we have $D^{2}v\geq 0$ and so
	\[
	\det(\ddbar v) = 2^{-2n}\det(D^{2}v+J^{T}\cdot D^{2}v\cdot J)
	\geq 2^{-2n+1}\det(D^{2}v).
	\]
	Combining this with \eqref{c0 upper c2},
	\[
	\det(D^{2}v) \leq 2^{2n-1}\det(\ddbar v)\leq C.
	\]
	\end{proof}			

		Using Claim \ref{bound det} and (\ref{3.5}), 
		\begin{equation}\label{zero estimates 3}
			c_{0}\ve^{2n}\leq C|P|.
		\end{equation}
		For each $x\in P$, choosing $y=0$ in $\eqref{P}$, then
		Since $|Dv|\leq \frac{\epsilon}{2}$, 
		\begin{equation*}
			I=v(0)\geq v(x)-\epsilon.
		\end{equation*}
		Since $I+\ve\leq 0$, it follows that
	$
		|I+\ve| \leq -v \ \ \text{on $P$}.
$
 Using \eqref{zero estimates 3}, it follows
		\[
		c_{0}\ve^{2n} \leq |P| \leq \frac{\int_{P}(-v)\chi^{n}}{|I+\ve|}.
		\]
$
	|I+\ve| \leq \frac{1}{|P|}\int_{P}(-v)\chi^{n} \leq C.
	$
		This gives the required estimate of $I$.
	\end{proof}

	\section{Second order estimates}\label{second order estimate}
		In this section, we prove the second order estimates. 
	\begin{theorem}\label{Thm4.1}
		Suppose that 
		\begin{equation}\label{assum for 2nd}
		-C_0\leq	\vp\leq M_0A^{\frac{1}{n+1}}.
			\end{equation}
	 There exists a constant $A_0$ and $C$ such that if $A\leq A_0$
		\begin{equation}\label{HMW}
			\sup_{M}|\ddbar \vp|_{\alpha} \leq C(\sup_{M}|\partial \vp|_{\alpha}^{2}+1),
		\end{equation}
		where $C$ depends on $(M, \alpha)$, $\omega_0$, $ h,A$ and $A_{0}$ is a constant such that $$M_0A^{\frac{1}{n+1}}_0\|\tilde{B}\|_{\alpha}\leq\frac{\kappa}{4}.$$ 
			\end{theorem}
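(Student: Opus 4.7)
The plan is to bound $\lambda_{1}$, the largest eigenvalue of $\omega=\chi+\ddbar\vp+W(\partial\vp)+\vp B$ relative to $\alpha$, at an interior maximum of a suitably chosen test function. Following the scheme of \cite{HMW10,STW,Szekelyhidi18}, I would take
\begin{equation*}
Q=\log\lambda_{1}+\eta(|\partial\vp|_{\alpha}^{2})+\xi(\vp),\quad \eta(t)=-\tfrac{1}{2}\log\!\bigl(1+2K-t\bigr),\quad \xi(s)=-As,
\end{equation*}
with $K=\sup_{M}|\partial\vp|_{\alpha}^{2}$ and $A$ a large constant to be fixed. Let $x_{0}$ be the maximum point of $Q$ and choose normal coordinates with $\alpha_{i\bar j}=\delta_{ij}$ and $g_{i\bar j}=\lambda_{i}\delta_{ij}$ at $x_{0}$, $\lambda_{1}\geq\cdots\geq\lambda_{n}$. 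A standard perturbation argument makes $\lambda_{1}$ smooth near $x_{0}$.

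Next I would compute $L(Q)$. Standard eigenvalue variational formulas give $L(\log\lambda_{1})=\lambda_{1}^{-1}F^{i\bar i}g_{1\bar 1, i\bar i}-\lambda_{1}^{-2}F^{i\bar i}|g_{1\bar 1, i}|^{2}$ plus off-diagonal good terms arising from eigenvalue separation. Differentiating \eqref{omega formula} twice and using \eqref{n-1 psh} replaces the main term by a combination involving $\vp_{11\bar 1\bar 1}$, the concavity quantity $F^{i\bar j,k\bar l}g_{i\bar j,1}g_{k\bar l,\bar 1}$ (nonpositive by Lemma \ref{prop f}), derivatives of $\chi$, $W$, and $B$, plus the new zero-order pieces $\vp B_{i\bar j,1\bar 1}F^{i\bar i}$ and $\vp_{1\bar 1}(\cdots)B$. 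The auxiliary function $\eta(|\partial\vp|^{2})$ then absorbs the third-order "bad" gradient terms by the now-standard calculation of \cite{STW}, while $\xi(\vp)=-A\vp$ contributes a coercive term $-A\,F^{i\bar i}\vp_{i\bar i}\geq A(\kappa\mathcal F-n)$ via Lemma \ref{properties coefficients}.

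The main new difficulty lies with the terms that involve the zero-order coefficient $\vp B$. Two such terms appear: the piece $\xi'(\vp)\tilde F^{i\bar j}\tilde B_{i\bar j}\vp=-A\,\vp\,\tilde F^{i\bar j}\tilde B_{i\bar j}$ coming from the Laplace-type operator acting on $\xi(\vp)$, and the term $\lambda_{1}^{-1}\vp\, F^{i\bar i}B_{i\bar i,1\bar 1}$ arising from twice differentiating $\omega_{1\bar 1}$ in the $\log\lambda_{1}$ derivative. This is exactly where the supremum bound in \eqref{assum for 2nd} and the smallness condition $M_{0}A_{0}^{1/(n+1)}\|\tilde B\|_{\alpha}\leq\kappa/4$ enter: we get $|\vp\,\tilde F^{i\bar j}\tilde B_{i\bar j}|\leq\tfrac{\kappa}{4}\mathcal F$ and $\lambda_{1}^{-1}|\vp\, F^{i\bar i}B_{i\bar i,1\bar 1}|\leq C\mathcal F$, so both can be absorbed into the coercive $A\kappa\mathcal F$ produced by $\xi$, provided $A$ is chosen large relative to these constants. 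This is the essential modification compared to \cite{STW}.

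Finally, at $x_{0}$ the inequality $0\geq L(Q)$ and the two alternatives of Lemma \ref{properties coefficients} are used in parallel: in the first alternative the coercive term $\kappa \mathcal F$ dominates all remaining error, while in the second $f_{k}\geq\kappa\sum f_{i}$ directly bounds $\lambda_{1}$. Both branches yield $\lambda_{1}(x_{0})\leq C(K+1)$ at the maximum of $Q$, from which the global bound $|\ddbar\vp|_{\alpha}\leq C(\sup_{M}|\partial\vp|_{\alpha}^{2}+1)$ follows because $Q$ attains its maximum at $x_{0}$ and $\eta,\xi$ are uniformly bounded on the range of $\vp$ and $|\partial\vp|^{2}$. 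The hardest step is controlling the new $\vp B$ contributions; the supremum estimate \eqref{assum for 2nd} together with the smallness assumption on $A_{0}$ is exactly what makes the argument close.
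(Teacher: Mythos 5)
Your overall scheme is the same as the paper's (maximum principle applied to $Q=\log\lambda_1+\eta(|\partial\vp|^2)+\xi(\vp)$, with the supremum bound in \eqref{assum for 2nd} and the smallness condition $M_0A_0^{1/(n+1)}\|\tilde B\|_\alpha\leq\kappa/4$ used to absorb the new $\vp B$ contribution against the coercive $\kappa$-term from Lemma \ref{properties coefficients}), but your choice $\xi(s)=-As$ creates a genuine gap. With a linear $\xi$ one has $\xi''=0$, and the ``now-standard calculation'' you invoke to kill the bad term $F^{i\bar i}|(\lambda_1)_i|^2/\lambda_1^2$ does not go through: after substituting the first-order condition $dQ(x_0)=0$, this term becomes $F^{i\bar i}\,|\eta'(|\partial\vp|^2)_i+\xi'\vp_i|^2$, and on the index set $I=\{i: F^{i\bar i}>\delta^{-1}F^{1\bar1}\}$ the piece $(\xi')^2F^{i\bar i}|\vp_i|^2$ (of size up to $A^2K\mathcal F$) can only be absorbed by the convexity term $\xi''F^{i\bar i}|\vp_i|^2$ via a choice of $\delta$ with $4\delta(\xi')^2\leq\tfrac12\xi''$ --- this is exactly the paper's Lemma \ref{third order term 2} and \eqref{delta}, and it forces $\xi''>0$ (the paper takes $\xi=D_1e^{-D_2(\vp-\sup_M\vp-1)}$, as in Sz\'ekelyhidi/STW). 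The remaining good terms, namely $\kappa|\xi'|\mathcal F$ in the first alternative or $F^{1\bar1}\lambda_1^2/K$, are of size $A\mathcal F$ and cannot dominate $A^2K\mathcal F$, so with linear $\xi$ you do not reach $\lambda_1\leq CK$. Relatedly, your sketch omits the split into the cases $\lambda_1\geq-\delta\lambda_n$ and $\delta\lambda_1<-\lambda_n$ and the decomposition of $N$ over $I$, which together with the concavity term $G$ and the structure of $Z$ (Lemma \ref{prop Z}, needed to control the first and second derivatives of the gradient term $W(\partial\vp)$) constitute the technical heart of the paper's proof.

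A second, smaller inaccuracy concerns the key new term itself: the bound $|\vp\,\tilde F^{i\bar j}\tilde B_{i\bar j}|\leq\tfrac{\kappa}{4}\mathcal F$ is false as stated, because \eqref{assum for 2nd} only makes $\sup_M\vp$ small while $\vp$ may be as negative as $-C_0$. What actually saves this term is the sign hypothesis $\tilde B\leq0$ together with $\xi'<0$: for $\vp\leq0$ the contribution $-\xi'\vp\,\tilde F^{i\bar i}\tilde B_{i\bar i}$ is nonnegative, and only the positive part of $\vp$, bounded by $M_0A^{1/(n+1)}$, produces a bad term of size $M_0A^{1/(n+1)}|\xi'|\mathcal F$, which is then absorbed using $M_0A_0^{1/(n+1)}\|\tilde B\|_\alpha\leq\kappa/4$ --- this is the paper's Lemma \ref{Lvp}. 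Similarly, your claim $-A F^{i\bar i}\vp_{i\bar i}\geq A(\kappa\mathcal F-n)$ is not unconditional: Lemma \ref{properties coefficients} is a dichotomy, and in the second alternative one must instead combine $F^{1\bar1}\geq\kappa\mathcal F$ with the quadratic term in $\lambda_1$ coming from $dQ=0$; it does not ``directly bound $\lambda_1$''. Moreover $F^{i\bar i}\vp_{i\bar i}$ differs from $F^{i\bar i}(g_{i\bar i}-\chi_{i\bar i})$ by the $W$ and $\vp B$ terms, which require the separate estimates just mentioned.
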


	In this paper, we compute using covariant derivatives with respect to the Chern connection of $\alpha$. 
		Denote $g_{i\ov{j},k}=\nabla_{k}g_{i\bar j}$ and  $g_{i\ov{j},k\ov{l}}=\nabla_{\bar l}\nabla_{k}g_{i\bar j}$.
		Let us recall the commutation formulas for covariant derivatives:
	\begin{equation}\label{commutation formulas}
		\begin{split}
				\vp_{i\ov{j}k} {} & = \vp_{k\ov{j}i}-T_{ki}^{p}\vp_{p\ov{j}}, \\
		\vp_{i\ov{j}k\ov{l}}{}&=\vp_{k\ov{l}i\ov{j}}-T_{ki}^{p}\vp_{p\ov{l}\ov{j}}  -\ov{T_{lj}^{q}}\vp_{k\ov{q}i}
			+\vp_{p\ov{j}}R_{k\ov{l}i}^{\ \ \ \ p}-\vp_{p\ov{l}}R_{i\ov{j}k}^{\ \ \ p}
			-T_{ik}^{p}\ov{T_{lj}^{q}}\vp_{p\ov{q}},\\
		\end{split}
	\end{equation}
		where  $T_{ij}^{k}$ and $R_{i\ov{j}k}^{\ \ \ \ l}$ are components of torsion tensor and curvature tensor  induced by the Chern connection of $\alpha$.

Suppose that  $\lambda_{1}\geq\lambda_{2}\geq\cdots\geq\lambda_{n}$ are the eigenvalues of $\Phi=(\Phi^{i}_{k})=(\alpha^{i\bar j}\omega_{k\bar j})$ with respect to $\alpha$. For convenience, we write
	$|\cdot| = |\cdot|_{\alpha}$
	and
	\begin{equation*}
		K=\sup_{M}|\partial \vp|^{2}+1.
	\end{equation*}
On $\Omega=\{\lambda_1>0\}\subset M$, consider
	\begin{equation*}
		Q = \log \lambda_1+\eta(|\partial \vp|^{2})+\xi(\vp)
	\end{equation*}
 where
	\[
	\xi(\vp) = D_1e^{-D_2(\vp-\sup_{M}\vp-1)}, \quad
	\eta(s) = -\frac{1}{2}\log(2K-s)
	,\]
for  constants $D_1$ and $D_2$ to be determined later.
	By directly calculation, 
	\begin{equation}\label{xieta}
		\begin{split}
		\eta''=2(\eta')^{2}, & \quad  \frac{1}{8K} \leq \eta' \leq \frac{1}{4K} . 
		\end{split}
	\end{equation}
	We may assume $\Omega\neq \emptyset$, otherwise we are done. Note $Q(z)\ri -\infty$ as $z$ approaches to $\partial{\Omega}$. Let $Q(x_0)=\max_{M}Q(x_0)$, where $x_{0}\in \Omega$.
Using the standard argument \cite{Szekelyhidi18,STW,TW17}, to prove Theorem \ref{Thm4.1}, it suffices to show
	\begin{equation}\label{goal}
		\lambda_1(x_{0}) \leq CK.
	\end{equation}

	Near $x_{0}$, choose a local normal coordinates $(z_1,\cdots,z_n)$ such that at $x_{0}$,
	\begin{equation}\label{5..6}
		\alpha_{i\overline{j}}=\delta_{ij}, \quad g_{i\overline{j}}=\delta_{ij}g_{i\overline{i}}, \quad g_{1\overline{1}}\geq g_{2\overline{2}}\geq\cdots\geq g_{n\overline{n}}.
	\end{equation}
and
	\[
	F^{1\ov{1}} \leq F^{2\ov{2}} \leq \cdots \leq F^{n\ov{n}}.
	\]
 We assume that $\lambda_1$ is smooth and $\lambda_1>\lambda_{2}$ at $x_{0}$,  using a viscosity argument (see \cite{Szekelyhidi18,STW,YR})

	\medskip
	Applying the maximum principle  at $x_{0}$, we see that
	\begin{equation}\label{L Q}
		\begin{split}
			0  \geq L(Q) 
			=& \frac{L(\lambda_1)}{\lambda_1}- F^{i\bar{i}}\frac{|(\lambda_1)_{i}|^{2}}{\lambda_1^{2}}
			+\eta'L(|\partial \vp|^{2})\\
			&+\eta'' F^{i\bar{i}}|(|\de \vp|^{2})_{i}|^{2} 
			+\xi'L(\vp)+\xi''F^{i\bar{i}}|\vp_{i}|^{2}.
		\end{split}
	\end{equation}

	In the sequel, we will use the Einstein summation,
and use $C$ to denote a constant depending on $\|\vp\|_{C^{0}}$, $h$, $\omega_0$,  $(M,\alpha)$, and $C_{D}$ to denote a constant further depending  on $D_1$ and $D_2$,

	\subsubsection{Lower bound for $L(Q)$}
	
	\begin{proposition}\label{lower bound of L Q}
		For $\ve\in (0,\frac{1}{3}]$, at $x_{0}$, we have
		\begin{equation}\label{LQ0}
	\begin{split}
		0\geq \,&
		G-\sum_{k=1}^n\frac{C}{\lambda_1}F^{i\bar i}|g_{1\bar 1, i}|- N
		+\frac{3\eta'}{4} \sum_{j}F^{i\bar{i}}(|\vp_{ij}|^{2}+|\vp_{i\bar j}|^{2})\\
		&+\eta'' F^{i\bar{i}}|(|\de \vp|^{2})_{i}|^{2} +\xi'L(\vp)+\xi'' F^{i\bar{i}}|\vp_{i}|^{2}
		-C\mathcal{F},
	\end{split}    
\end{equation}
		where
		\[
		G= -\frac{1}{\lambda_1} F^{i\bar{j},k\bar{l}}\nabla_{1}g_{i\ov{j}}\nabla_{\ov{1}}g_{k\ov{l}}, N=F^{i\bar{i}}\frac{|(\lambda_1)_{i}|^{2}}{\lambda_1^{2}}.
		\]
	\end{proposition}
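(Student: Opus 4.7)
The plan is to apply the maximum principle to $Q$ at $x_0$, take (\ref{L Q}) as the starting identity, and provide a lower bound for each of the two principal pieces $L(\lambda_1)/\lambda_1$ and $\eta'L(|\de\vp|^2)$; the remaining pieces $-N$, $\eta''F^{i\bar i}|(|\de\vp|^2)_i|^2$, $\xi'L(\vp)$ and $\xi''F^{i\bar i}|\vp_i|^2$ already appear verbatim in the target estimate (\ref{LQ0}) and can be carried through unchanged. Since $\lambda_1$ is simple and smooth at $x_0$ (by the viscosity reduction), I would use the standard simple-eigenvalue derivative formulas
$$
(\lambda_1)_i = g_{1\bar 1, i}, \qquad (\lambda_1)_{i\bar i} = g_{1\bar 1, i\bar i} + \sum_{p>1}\frac{|g_{1\bar p, i}|^2 + |g_{p\bar 1, i}|^2}{\lambda_1 - \lambda_p},
$$
and drop the second-eigenvalue correction since it is nonnegative after multiplying by $F^{i\bar i}/\lambda_1$.

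For $L(\lambda_1)/\lambda_1$, I would commute covariant derivatives via (\ref{commutation formulas}) to rewrite $g_{1\bar 1, i\bar i}$ as $g_{i\bar i, 1\bar 1}$ plus curvature and torsion terms of size $O(\lambda_1)$; after division by $\lambda_1$ these become $O(\mathcal{F})$. Differentiating the equation $F(g)=(n-1)(h+b)$ twice then produces $F^{i\bar i}g_{i\bar i,1\bar 1} = -F^{i\bar j,k\bar l}g_{i\bar j,1}g_{k\bar l,\bar 1}+(n-1)h_{1\bar 1}$, which delivers $G$ upon dividing by $\lambda_1$. Substituting the identity $g_{i\bar j}=\chi_{i\bar j}+\vp_{i\bar j}+W(\de\vp)_{i\bar j}+B_{i\bar j}\vp$ inside $\nabla_1\nabla_{\bar 1}g_{i\bar j}$ produces two classes of error terms: pure background pieces absorbed into $C\mathcal{F}$, and mixed pieces of the form $(\de\vp)\cdot\nabla_1 g_{1\bar 1}$ or $\vp\cdot\nabla_1 g_{1\bar 1}$, which collect as $(C/\lambda_1)F^{i\bar i}|g_{1\bar 1,i}|$. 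The zero-order contribution $\tilde F^{i\bar j}\tilde B_{i\bar j}\lambda_1/\lambda_1$ coming from the $L$-operator is uniformly bounded and also joins $C\mathcal{F}$.

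For $\eta'L(|\de\vp|^2)$, direct expansion yields the desired good quadratic $\eta'\sum_j F^{i\bar i}(|\vp_{ij}|^2+|\vp_{i\bar j}|^2)$ together with cross terms $2\eta'\,\re\sum_j F^{i\bar i}\vp_{ji\bar i}\vp_{\bar j}$ and lower-order contributions coming from $W$ and $B$ in the $L$-operator. Commuting via (\ref{commutation formulas}) and inserting $\vp_{i\bar i j}=g_{i\bar i,j}$ minus derivatives of $\chi_{i\bar i}$, $W^k_{i\bar i}\vp_k$ and $B_{i\bar i}\vp$, combined with the once-differentiated equation $F^{i\bar i}g_{i\bar i,j}=(n-1)h_j$, bounds the cross term by $\eta'\sqrt{K}\cdot F^{i\bar i}|\vp_{ij}|$ plus $\eta'\cdot O(\mathcal{F}K)$. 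A Cauchy--Schwarz with parameter $\ve\in(0,1/3]$ absorbs the first part into a quarter of the good term, leaving the coefficient $3\eta'/4$ stated in (\ref{LQ0}); the second part is swallowed by $C\mathcal{F}$ via the bound $\eta'\leq (4K)^{-1}$ from (\ref{xieta}). The remaining $W$- and $B$-contributions to $L(|\de\vp|^2)$ itself are $O(\mathcal{F})$ thanks to $|\de\vp|^2\leq K$ and $|\vp|\leq C$ from (\ref{assum for 2nd}).

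The main obstacle is the bookkeeping around the zero-order term $\vp B$ and the first-order term $W(\de\vp)$ in $\omega=\chi+\ddbar\vp+W(\de\vp)+\vp B$: each differentiation of $g$ creates new contributions in which $\vp$ or $\de\vp$ is multiplied by covariant derivatives of $B$ and $W$, and one must verify that every such contribution lands in one of the two allowed error channels $(C/\lambda_1)F^{i\bar i}|g_{1\bar 1,i}|$ or $C\mathcal{F}$, rather than polluting $G$ or the Hessian term with coefficient $3\eta'/4$. This is the step where the $C^0$ bound $|\vp|\leq M_0 A^{1/(n+1)}$ from Section 3, together with the smallness of $A$ hypothesized in (\ref{assum for 2nd}), is used to dominate $|\vp|\cdot|\nabla B|$ and $|\vp|\cdot|\nabla^2 B|$ contributions uniformly.
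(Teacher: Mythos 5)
Your outline starts from the same identity \eqref{L Q} and the same eigenvalue-derivative formulas as the paper, but it contains a genuine gap at the very point where the non-K\"ahler structure bites. You propose to \emph{drop} the nonnegative second-eigenvalue correction $\sum_{p>1}\frac{|g_{1\bar p,i}|^2+|g_{p\bar 1,i}|^2}{\lambda_1-\lambda_p}$ and simultaneously claim that commuting $g_{1\bar 1,i\bar i}$ into $g_{i\bar i,1\bar 1}$ only costs curvature/torsion terms of size $O(\lambda_1)$. That is false: the commutation formula \eqref{commutation formulas} applied to $\vp_{1\bar 1 i\bar i}$ versus $\vp_{i\bar i 1\bar 1}$ produces torsion terms containing genuine \emph{third} derivatives, of the schematic form $T\ast\vp_{1\bar k i}$ for all $k$, i.e.\ after rewriting, terms $F^{i\bar i}|g_{1\bar k,i}|$ with $k\neq 1$. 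These fit into neither of the two error channels allowed in \eqref{LQ0} (only $g_{1\bar 1,i}$ appears there, because only that entry is tied to $dQ(x_0)=0$), and they cannot be absorbed by the second-order good term $\frac{3\eta'}{4}\sum_j F^{i\bar i}(|\vp_{ij}|^2+|\vp_{i\bar j}|^2)$. In the paper they are absorbed exactly by the retained positive eigenvalue-correction term, via $|g_{1\bar k,i}|\leq \frac{1}{8n\lambda_1}|g_{1\bar k,i}|^2+C\lambda_1$ (Proposition \ref{lower bound of L lambda1}); once you discard that term, your argument does not close.

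A second, related under-specification concerns the gradient term $W(\partial\vp)$. The quantities $F^{i\bar i}W_{1\bar 1,i\bar i}$ and $F^{i\bar i}W_{i\bar i,1\bar 1}$ are not ``mixed pieces of the form $(\de\vp)\cdot\nabla_1 g_{1\bar 1}$ or $\vp\cdot\nabla_1 g_{1\bar 1}$'': they contain third-order terms such as $F^{i\bar i}W^p_{1\bar 1}\vp_{pi\bar i}$ and $\tilde F^{i\bar i}$-weighted terms like $\vp_{k\bar 1 1}$. Controlling them requires the once-differentiated equation (to bound $F^{i\bar i}\vp_{i\bar i p}$), the structural Lemma \ref{prop Z} (so that no term $\tilde F^{1\bar 1}\vp_{11\bar 1}$ appears), the index comparisons of Lemma \ref{prop f}, and crucially an absorption of leftover pure second derivatives $|\vp_{pi}|$ into a small multiple $\frac{1}{CK}\sum_p F^{i\bar i}|\vp_{pi}|^2$ of the good Hessian term (using $\lambda_1\geq K$), not only into $C\mathcal F$ after dividing by $\lambda_1$ --- note that pure second derivatives $\vp_{pq}$ are \emph{not} bounded by $\lambda_1$. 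Your plan borrows the good Hessian term only in the $L(|\de\vp|^2)$ step, so this absorption is missing from the $L(\lambda_1)$ step. (A minor further point: the smallness of $A$ and the bound $\vp\leq M_0A^{1/(n+1)}$ are not needed for this proposition; they enter later, in Lemma \ref{Lvp} and the case analysis, to offset $\xi'\vp\tilde F^{i\bar i}\tilde B_{i\bar i}$ against $\kappa$.)
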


	\medskip
The third term is the bad term that we need to control. 
To prove Proposition \ref{lower bound of L Q}, we shall estimate the lower bounds of $L(\lambda_1)$ and $L(|\de u|^{2})$, respectively.
First we deal with the term $F^{i\bar{i}}g_{1\bar 1,i\bar{i}}$ at $x_0$. We have the following inequality.
	\medskip 
		\begin{proposition}\label{claim 1}
	Assume $\lambda_1\geq K$.			At $x_{0}$, we have
	\begin{equation*}
		\begin{split}
			F^{i\bar{i}}g_{1\bar 1,i\bar{i}}
			\geq \,& -F^{i\ov{j},k\ov{l}}\nabla_{1}g_{i\ov{j}}\nabla_{\ov{1}}g_{k\ov{l}}-\sum_{k=1}^nCF^{i\bar i}|g_{1\bar k, i}|-\frac{1}{32K}F^{i\bar i}|\vp_{pi}|^2\\
			&-C\mathcal{F}\lambda_1.
		\end{split}
	\end{equation*}
\end{proposition}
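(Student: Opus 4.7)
The plan is to differentiate the equation $F(g)=(n-1)(h+b)$ twice at $x_{0}$, extract the concavity term on the right-hand side, and then convert $F^{i\bar i}g_{1\bar 1,i\bar i}$ into $F^{i\bar i}g_{i\bar i,1\bar 1}$ by a careful commutation of covariant derivatives, routing each error term into one of the three buckets in the stated inequality. Differentiating the equation once in $\de_{1}$ and once in $\de_{\bar 1}$ gives, at $x_{0}$,
\begin{equation*}
F^{i\bar j,k\bar l}\nabla_{1}g_{i\ov{j}}\nabla_{\ov{1}}g_{k\ov{l}}+F^{i\bar i}g_{i\bar i,1\bar 1}=(n-1)h_{1\bar 1}.
\end{equation*}
Since $f(\lambda)=\sum_{k}\log(\sum_{j\neq k}\lambda_{j})$ is concave on $\Gamma_{n}$, the quadratic form $-F^{i\bar j,k\bar l}(\cdot)(\cdot)$ is nonnegative, and the bounded term $(n-1)h_{1\bar 1}$ is absorbed into $-C\mathcal{F}\lambda_{1}$ using the lower bound $\mathcal{F}>\kappa$ from Lemma \ref{properties coefficients} and the standing hypothesis $\lambda_{1}\geq K\geq 1$. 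It thus suffices to control the commutator $F^{i\bar i}(g_{1\bar 1,i\bar i}-g_{i\bar i,1\bar 1})$.

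Using $g_{i\bar j}=\chi_{i\bar j}+\vp_{i\bar j}+2\re(W^{p}_{i\bar j}\vp_{p})+\vp B_{i\bar j}$ from \eqref{omega formula}, I split the commutator into four pieces. The $\chi$-piece is manifestly bounded, contributing only $-C\mathcal{F}\lambda_{1}$. For the $\vp_{i\bar j}$-piece, apply the commutation formula \eqref{commutation formulas} with $(i,j,k,l)=(1,1,i,i)$: the resulting curvature-times-second-derivative and torsion-squared-times-second-derivative terms are bounded by $C(|\vp_{p\bar q}|+1)\leq C\lambda_{1}+C$, hence contract to $-C\mathcal{F}\lambda_{1}$. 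The two torsion-times-third-derivative terms $T^{p}_{i1}\vp_{p\bar i\bar 1}$ and $\overline{T^{q}_{i1}}\vp_{i\bar q 1}$ are rewritten, via $\vp_{p\bar i\bar 1}=\nabla_{\bar 1}(g_{p\bar i}-\chi_{p\bar i}-W_{p\bar i}-\vp B_{p\bar i})$, as $T\cdot\nabla g$ plus absorbable debris, feeding the bucket $\sum_{k=1}^{n}CF^{i\bar i}|g_{1\bar k,i}|$ after re-indexing.

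The delicate step is handling the $W$- and $\vp B$-contributions. Expanding $\nabla_{\bar i}\nabla_{i}W_{1\bar 1}$ produces the mixed second-derivative terms $W^{p}_{1\bar 1,\bar i}\vp_{pi}$ (and their conjugates), which by Cauchy--Schwarz satisfy $|W^{p}_{1\bar 1,\bar i}\vp_{pi}|\leq \frac{1}{64K}|\vp_{pi}|^{2}+CK$; summing against $F^{i\bar i}$ delivers exactly the required $-\frac{1}{32K}F^{i\bar i}|\vp_{pi}|^{2}$, with a residual $CK\mathcal{F}\leq C\mathcal{F}\lambda_{1}$ thanks to $\lambda_{1}\geq K$. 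The third-derivative term $W^{p}_{1\bar 1}\vp_{pi\bar i}$ is commuted to $W^{p}_{1\bar 1}\nabla_{p}g_{i\bar i}$ modulo curvature, which again feeds the $|g_{1\bar k,i}|$ bucket. For $\vp B_{1\bar 1}$, the $C^{0}$-bound $|\vp|\leq C$ from Propositions \ref{upper bound u} and \ref{Prop32} reduces everything to bounded expressions once we substitute $\vp_{i\bar i}=g_{i\bar i}-\chi_{i\bar i}-W_{i\bar i}-\vp B_{i\bar i}$ and use $F^{i\bar i}g_{i\bar i}=\sum_{i}\lambda_{i}f_{i}=n$.

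The main obstacle is precisely this $W(\partial\vp)+\vp B$ contribution, which is absent from the standard Monge--Amp\`ere or STW setup and is the reason the estimate looks different from the pure $\ddbar \vp$ case. Handling $W$ forces the small coefficient $\frac{1}{32K}$, which is only re-absorbed later in Proposition \ref{lower bound of L Q} by the good term $\frac{3\eta'}{4}F^{i\bar i}|\vp_{ij}|^{2}$ generated by $\eta$, while handling $\vp B$ is what makes the sharp $C^{0}$-bound of Section 3 indispensable already at the second-order stage; both features conspire to justify the hypothesis $\lambda_{1}\geq K$ built into the statement.
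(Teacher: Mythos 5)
Your skeleton (differentiate the equation twice, compare $F^{i\bar i}g_{1\bar 1,i\bar i}$ with $F^{i\bar i}g_{i\bar i,1\bar 1}$, split the commutator according to the four summands $\chi+\ddbar\vp+W(\partial\vp)+\vp B$ of $g$) is the same as the paper's, and your handling of the $\chi$-, $\ddbar\vp$- and $\vp B$-pieces is essentially fine. The gap is in the $W$-piece, which is exactly where the real work of Proposition \ref{claim 1} lies. First, you never estimate $F^{i\bar i}W_{i\bar i,1\bar 1}$, the second half of the $W$-commutator: it contains the contracted third derivatives $F^{i\bar i}W^{p}_{i\bar i}\vp_{p1\bar 1}$, whose coefficient is of size $\mathcal{F}$ rather than $F^{i\bar i}$, so neither Cauchy--Schwarz against $\frac{1}{32K}F^{i\bar i}|\vp_{pi}|^{2}$ nor the bucket $\sum_{k}CF^{i\bar i}|g_{1\bar k,i}|$ can absorb it. The paper disposes of it only through the identity $F^{i\bar i}W_{i\bar i,1\bar 1}=\tilde{F}^{i\bar i}Z_{i\bar i,1\bar 1}$ coming from \eqref{trans}, the structural Lemma \ref{prop Z} (the dangerous coefficients of $\vp_{11\bar 1}$ and $\vp_{1\bar 1}$ vanish at $x_0$), and Lemma \ref{prop f}(3)(4) to trade $\tilde{F}^{1\bar 1}$ for $F^{k\bar k}$, $k\geq 2$, so that the surviving third derivatives land, with matching weights, in $F^{i\bar i}|g_{1\bar 1,i}|$; this is Lemma \ref{W 1}, and nothing in your proposal replaces it. Second, your routing of $F^{i\bar i}W^{p}_{1\bar 1}\vp_{pi\bar i}$ is wrong: after commuting, this term is $W^{p}_{1\bar 1}F^{i\bar i}g_{i\bar i,p}$ modulo lower order, whose differentiation index $p$ does not match the contraction index $i$, so it is not of the form $F^{i\bar i}|g_{1\bar k,i}|$ and cannot be made so by re-indexing (nor is it later controllable by the good terms $F^{i\bar i}|g_{1\bar p,i}|^{2}/\lambda_1$ or by $dQ(x_0)=0$). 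The correct treatment is the once-differentiated equation $F^{i\bar i}g_{i\bar i,p}=(n-1)h_{p}$, combined again with the $Z$-structure to handle $F^{i\bar i}W_{i\bar i,p}$ --- this is the paper's Lemma \ref{W 2} and \eqref{third order 1}--\eqref{third order 2}; your proposal never invokes the first derivative of the equation at all.

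A smaller imprecision of the same kind occurs in your treatment of the torsion terms $T^{p}_{i1}\vp_{p\bar i\bar 1}$: substituting directly $\vp_{p\bar i\bar 1}=\nabla_{\bar 1}(g_{p\bar i}-\chi_{p\bar i}-W_{p\bar i}-\vp B_{p\bar i})$ produces $F^{i\bar i}|g_{i\bar p,1}|$, again with mismatched indices, together with $W$-debris of the form $\mathcal{F}|\vp_{k1}|$ which the $\frac{1}{32K}$ bucket does not dominate when $F^{1\bar 1}$ is small. One must first commute the third derivatives of $\vp$ so that the differentiation index equals the contracted index, as in \eqref{comm 1}, and only then substitute $\vp_{1\bar k i}=g_{1\bar k,i}-\chi_{1\bar k,i}-W_{1\bar k,i}-(\vp B_{1\bar k})_{i}$. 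With these repairs your argument becomes the paper's proof; as written, the two $W$-estimates, which are the heart of the proposition, are missing or mis-stated.
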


\begin{proof}
	By differentiating \eqref{n-1 psh} along $\nabla_{\ov{1}}\nabla_{1}$, 
	\begin{equation}\label{twice deriv}
		\begin{split}
		F^{i\ov{i}}g_{i\bar i,1\bar 1}&=	F^{i\ov{i}}\nabla_{\ov{1}}\nabla_{1}(\chi_{i\ov{i}}+\vp_{i\bar i}+W_{i\bar i}+\vp B_{i\bar i})\\
		&= (n-1)h_{1\ov{1}}-F^{i\ov{j},k\ov{l}}\nabla_{1}g_{i\ov{j}}\nabla_{\ov{1}}g_{k\ov{l}}.
		\end{split}
	\end{equation}
On the other hand,	by \eqref{commutation formulas},
\begin{equation}\label{comm 1}
	F^{i\bar{i}}\vp_{1\bar 1 i\bar i}= F^{i\bar i}\vp_{i\bar i1\bar 1}+\sum_{k=1}^nF^{i\bar i}O(|\vp_{1\bar k i}|)+O(\lambda_1)\mathcal{F},
\end{equation}
where $O(|\vp_{1\bar k i}|)$, $O(\lambda_1)$ mean the terms can controlled by $|\vp_{1\bar k i}|$, $\lambda_1$  respectively.	
Using  \eqref{comm 1} and \eqref{twice deriv}, it follows
\begin{equation*}
	\begin{split}
		&F^{i\ov{i}}\vp_{1\bar 1i\bar i}+F^{i\bar i}W_{i\bar i,1\bar 1}\\
		\geq &-C-F^{i\ov{j},k\ov{l}}\nabla_{1}g_{i\ov{j}}\nabla_{\ov{1}}g_{k\ov{l}}	-C\mathcal{F}\lambda_1
		-\sum_{k=1}^nCF^{i\bar i}|\vp_{1\bar k i}|.
	\end{split}
\end{equation*}
Therefore,
\begin{equation*}
	\begin{split}
		F^{i\bar i}g_{1\bar1, i\bar i}=&F^{i\bar i}\chi_{1\bar 1,i\bar i}+F^{i\bar i}\vp_{1\bar 1i\bar i}+F^{i\bar i}W_{1\bar1,i\bar i}+F^{i\bar i}(\vp B)_{1\bar 1,i\bar i}\\
		\geq & -F^{i\ov{j},k\ov{l}}\nabla_{1}g_{i\ov{j}}\nabla_{\ov{1}}g_{k\ov{l}}	-\sum_{k=1}^nCF^{i\bar i}|\vp_{1\bar k i}|-C\mathcal{F}\lambda_1\\
		&+F^{i\bar i}W_{1\bar1,i\bar i}-F^{i\bar i}W_{i\bar i,1\bar 1}-C.
	\end{split}
\end{equation*}
Note
$
	\vp_{1\bar k i}=g_{1\bar k,\bar 1}-\chi_{1\bar k, i}-W_{1\bar k, i}-B_{1\bar k,i}.
$
We have
\begin{equation}
	\begin{split}
	\sum_{k=1}^nF^{i\bar i}|\vp_{1\bar k i}|&\leq C(\sum_{p}F^{i\bar i}|\vp_{p}|+\lambda_1\mathcal{F})\\
	&\leq\frac{1}{96K}\sum_{p}F^{i\bar i}|\vp_{pi}|^2+C\lambda_1\mathcal{F}+\sum_{k=1}^nCF^{i\bar i}|g_{1\bar k, i}|.
	\end{split}
	\end{equation}
It follows that
\begin{equation}\label{eigen inequality}
	\begin{split}
		F^{i\bar i}g_{1\bar1, i\bar i}
		\geq & -F^{i\ov{j},k\ov{l}}\nabla_{1}g_{i\ov{j}}\nabla_{\ov{1}}g_{k\ov{l}}	-\sum_{k=1}^nF^{i\bar i}|g_{1\bar k, i}|-C\mathcal{F}\lambda_1\\
		&+F^{i\bar i}W_{1\bar1,i\bar i}-F^{i\bar i}W_{i\bar i,1\bar 1}-\frac{1}{96K}\sum_{p}F^{i\bar i}|\vp_{pi}|^2.
	\end{split}
\end{equation}

Now we deal the terms involving $W$.	First we deal with $F^{i\bar i}W_{i\bar i,1\bar1}$.  
\begin{lemma}\label{W 1}
	If $\lambda_1\geq K$, we have
	\begin{equation*}
		F^{i\bar i}W_{i\bar i, 1\bar 1}\leq \frac{1}{96K}\sum_{p}F^{i\bar i}|\vp_{pi}|^2+C(F^{i\bar i}|g_{1\bar 1,i}|+\lambda_{1}\mathcal{F}).
	\end{equation*}
\end{lemma}

\begin{proof}
	By \eqref{coef copar} and \eqref{trans}, we have
	\begin{equation*}
		\begin{split}
			F^{i\bar i}W_{i\bar i,1\bar 1}&=\frac{1}{n-1}\sum_{k}\sum_{i\neq k}\tilde{F}^{i\bar i}W_{k\bar k, 1\bar 1}\\
			&=\frac{1}{n-1}\sum_{i}\tilde{F}^{i\bar i}\sum_{k\neq i}	W_{k\bar k,1\bar 1}=\tilde{F}^{i\bar i}Z_{i\bar i,1\bar 1}.
		\end{split}	
	\end{equation*}
	By Lemma \ref{prop Z}, $Z_{1\bar 1,1\bar 1}$ does not contain the terms $\vp_{11\bar 1}$ and $\vp_{1\bar 1}$ or their complex conjugates.	By Lemma \ref{prop f}(3) (4), it follows that
	\begin{equation*}
		\begin{split}
			F^{i\bar i}W_{i\bar i,1\bar 1}\leq &C(\tilde{F}^{1\bar 1}\sum_{k>1}(|\vp_{k1}|+|\vp_{k\bar 1 1}|)+\sum_{i>1,k}\tilde{F}^{i\bar i}(|\vp_{k1}|+|\vp_{k\bar 1 1}|)+\lambda_1\mathcal{F})\\
			\leq &C(\sum_{i>1}F^{i\bar i}(|\vp_{i1}|+|\vp_{k\bar 1 1}|)+F^{1\bar 1}\sum_{k}(|\vp_{k1}|+|\vp_{k\bar 1 1})+\lambda_1\mathcal{F})\\
			\leq & C(F^{1\bar 1}(|\vp_{11}|+|\vp_{1\bar 1 1}|)+\sum_{i>1}F^{i\bar i}(|\vp_{i1}|+|\vp_{i\bar 1 1}|)+\lambda_1\mathcal{F})\\
			\leq & C(\sum_{p}F^{i\bar i}|\vp_{pi}|+F^{i\bar i}|\vp_{1\bar 1 i}|+\lambda_1\mathcal{F}).
		\end{split}
	\end{equation*}
	Here in the third inequality, we used $F^{1\bar 1}\leq F^{i\bar i}$.
	Note that 
	\begin{equation*}
		\vp_{1\bar 1 i}=g_{1\bar 1,\bar i}-\chi_{1\bar 1, i}-W_{1\bar 1, i}-(\vp B_{1\bar 1})_{i},
	\end{equation*}
	which implies
	\begin{equation*}
		\begin{split}
		F^{i\bar i}|\vp_{1\bar 1 k}|\leq & F^{i\bar i}|g_{1\bar 1, i}|+C(\sum_{p}F^{i\bar i}|\vp_{pi}|+\lambda_1\mathcal{F}).
		\end{split}
	\end{equation*}
Using  the Cauchy-Schwarz  inequality and $\lambda_1\geq K$, we have
	\begin{equation*}
		F^{i\bar i}W_{i\bar i, 1\bar 1}\leq \frac{1}{96K}\sum_{p}F^{i\bar i}|\vp_{pi}|^2+C(F^{i\bar i}|g_{1\bar 1,i}|+\lambda_{1}\mathcal{F}).
	\end{equation*}
\end{proof}

Next we deal with the $F^{i\bar i}W_{1\bar 1, i\bar i}$. We have the following inequality.
\begin{lemma}\label{W 2}
	At $x_0$, we have
	\begin{equation}
		F^{i\bar i}W_{1\bar 1,i\bar i} \geq -\sum_{p}\frac{1}{96K}F^{i\bar i}|\vp_{pi}|^2-C\lambda_1\mathcal{F}.
	\end{equation}
\end{lemma}

\begin{proof}
	By directly calculation,
	\begin{equation}\label{W term}
		\begin{split}
			&F^{i\bar i}W_{1\bar 1,i\bar i}=F^{i\bar i}2\re(\nabla_{\bar i}\nabla_{i}(W_{1\bar 1}^p\vp_{p}))\\
			=&2\re(F^{i\bar i}W^p_{1\bar 1}\vp_{pi\bar i})+O(F^{i\bar i}|\vp_{pi}|)+O(\lambda_1)\mathcal{F}.
		\end{split}
	\end{equation}
	Differentiating \eqref{n-1 psh} along $\nabla_{p}$, we have
	\begin{equation*}
		F^{i\bar i}g_{i\bar i, p}=(n-1)h_{p},
	\end{equation*}
	which implies
	\begin{equation}\label{third order 1}
		\begin{split}
			|F^{i\bar i}\vp_{i\bar ip}|=&|F^{i\bar i}(g_{i\bar i, p}-\chi_{i\bar i, p}-W_{i\bar{i},p}-(\vp B_{i\bar i})_p)|\\
			\leq &CK^{\frac{1}{2}}\mathcal{F}+|F^{i\bar i}W_{i\bar i,p}|.
		\end{split}
	\end{equation}
	Here we used $K\geq1$. Now we deal with $|F^{i\bar i}W_{i\bar i,p}|$.
	By \eqref{trans}, we have
	\begin{equation*}
		F^{i\bar i}W_{i\bar i,p}=\tilde{F}^{i\bar i}Z_{i\bar i,p}.
	\end{equation*}
	Then, using Lemma \ref{prop Z} and \eqref{coef copar},  
	\begin{equation*}
		\begin{split}
			F^{i\bar i}W_{i\bar i, p}\leq &C(\tilde{F}^{1\bar 1}\sum_{k>1}(|\vp_{kp}|+|\vp_{\bar kp}|)+\tilde{F}^{i\bar i}(|\vp_{ip}|+|\vp_{\bar ip}|)+K^{\frac{1}{2}}\mathcal{F})\\
			\leq & C(\sum_{k>1}F^{i\bar i}((|\vp_{ip}|+|\vp_{\bar ip}|)+F^{1\bar 1}\sum_{k}(|\vp_{kp}|+|\vp_{\bar kp}|)+K^{\frac{1}{2}}\mathcal{F})\\
			\leq & C(\sum_{p}F^{i\bar i}(|\vp_{pi}|+|\vp_{\bar ip}|)+K^{\frac{1}{2}}\mathcal{F}).
		\end{split}
	\end{equation*}
	Combing this inequality with \eqref{third order 1},
	\begin{equation}\label{third order 2}
		|F^{i\bar i}\vp_{i\bar i p}|\leq \sum_{p}F^{i\bar i}(|\vp_{pi}|+|\vp_{\bar ip}|)+K^{\frac{1}{2}}\mathcal{F}).
	\end{equation}
	From \eqref{W term}, we have
	\begin{equation*}
		\begin{split}
		F^{i\bar i}W_{1\bar 1,i\bar i} \geq &-C(\sum_{p}F^{i\bar i}|\vp_{pi}|+\lambda_1\mathcal{F})\\
		\geq & -\sum_{p}\frac{1}{96K}F^{i\bar i}|\vp_{pi}|^2-C\lambda_1\mathcal{F}.\\
		\end{split}
	\end{equation*}
\end{proof}
Combing \eqref{eigen inequality}, Lemma \ref{W 1} with Lemma \ref{W 2}, 
\begin{equation*}
	\begin{split}
		F^{i\bar{i}}g_{1\bar 1,i\bar{i}}
		\geq & -F^{i\ov{j},k\ov{l}}\nabla_{1}g_{i\ov{j}}\nabla_{\ov{1}}g_{k\ov{l}}-\sum_{k=1}^nCF^{i\bar i}|g_{1\bar k, i}|\\
		&-\frac{1}{32K}F^{i\bar i}|\vp_{pi}|^2-C\lambda_1\mathcal{F}.
	\end{split}
\end{equation*}

\end{proof}

Using the inequality about $L(g_{1\bar 1})$ at $x_0$, we immediately obtain the following inequality for $L(\lambda_1)$.
	\begin{proposition}\label{lower bound of L lambda1}
		Assume $\lambda_1\geq CK$.
		For each $\ve\in(0,\frac{1}{3}]$, at $x_{0}$, we have
		\begin{equation}
			\begin{split}
		L(\lambda_1) \geq &\sum_{p>1}F^{i\bar{i}}\frac{|g_{1\bar p,i}|^2}{8n\lambda_1}
		-F^{i\ov{j},k\ov{l}}\nabla_{1}g_{i\ov{j}}\nabla_{\ov{1}}g_{k\ov{l}}-\sum_{k=1}^nCF^{i\bar i}|g_{1\bar 1, i}|\\
		&-\frac{1}{32K}F^{k\bar k}|\vp_{pk}|^2-C\lambda_1\mathcal{F}.
		\end{split}
		\end{equation}
	\end{proposition}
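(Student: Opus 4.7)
The plan is to decompose $L(\lambda_1)$ into $F^{i\bar i}\lambda_1^{,i\bar i}$ plus the $W$-transport and $\tilde{B}$-potential contributions that make up $L$, extract a positive third-order cross-term from the eigenvalue Hessian via standard perturbation, and then substitute Proposition~\ref{claim 1} for the remaining piece $F^{i\bar i}g_{1\bar 1, i\bar i}$.

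Since we arranged $\lambda_1 > \lambda_2$ at $x_0$, the top eigenvalue is smooth near $x_0$ and the standard eigenvalue perturbation identities give, at $x_0$,
\[
(\lambda_1)_k = g_{1\bar 1, k}, \qquad \lambda_1^{,i\bar i} = g_{1\bar 1, i\bar i} + \sum_{p>1}\frac{|g_{1\bar p, i}|^2 + |g_{p\bar 1, i}|^2}{\lambda_1 - \lambda_p}.
\]
Contracting with $F^{i\bar i}$ and using $\lambda_1 - \lambda_p > 0$ for $p > 1$ yields
\[
F^{i\bar i}\lambda_1^{,i\bar i} \geq F^{i\bar i}g_{1\bar 1, i\bar i} + 2\sum_{p>1}\frac{F^{i\bar i}|g_{1\bar p, i}|^2}{\lambda_1 - \lambda_p}.
\]
To recover the coefficient $\frac{1}{8n\lambda_1}$ appearing in the conclusion, I would upper-bound each denominator. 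Using $\tilde{\omega}\in\Gamma_n$, i.e.\ $\tilde{\lambda}_i = \frac{1}{n-1}\sum_{k\neq i}\lambda_k > 0$ for every $i$, together with the ordering $\lambda_1\geq\cdots\geq\lambda_n$, the inequalities $\tilde{\lambda}_1 > 0$ and $\tilde{\lambda}_n > 0$ force $\lambda_p \geq -(n-2)\lambda_1$ for every $p \geq 2$. Hence $\lambda_1 - \lambda_p \leq (n-1)\lambda_1$, and
\[
2\sum_{p>1}\frac{F^{i\bar i}|g_{1\bar p, i}|^2}{\lambda_1 - \lambda_p} \geq \sum_{p>1}\frac{F^{i\bar i}|g_{1\bar p, i}|^2}{8n\lambda_1},
\]
exactly the positive term claimed.

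Then I would substitute Proposition~\ref{claim 1} for $F^{i\bar i}g_{1\bar 1, i\bar i}$, which directly inherits all four error contributions $-F^{i\bar j, k\bar l}\nabla_1 g_{i\bar j}\nabla_{\bar 1}g_{k\bar l}$, $-C\sum_k F^{i\bar i}|g_{1\bar k, i}|$, $-\frac{1}{32K}F^{i\bar i}|\vp_{pi}|^2$, and $-C\lambda_1\mathcal{F}$ stated on the right-hand side. The remaining two pieces of $L(\lambda_1)$, namely the transport term $2\re(F^{i\bar i}W^k_{i\bar i}g_{1\bar 1, k})$ and the zero-order term $\tilde{F}^{i\bar i}\tilde{B}_{i\bar i}\lambda_1$, are absorbed into the errors already present: the first by Cauchy-Schwarz combined with Lemma~\ref{prop f}(5) gives a bound of $CF^{i\bar i}|g_{1\bar 1,i}| + C\lambda_1\mathcal{F}$, and the second is directly controlled by $C\lambda_1\mathcal{F}$. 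The main obstacle is the cone-estimate $\lambda_1 - \lambda_p \leq (n-1)\lambda_1$: this is the one step that genuinely uses $\tilde{\omega}\in\Gamma_n$, and without it the cross-term with its crucial $1/\lambda_1$ scaling --- on which the rest of the second-order estimate depends --- could not be produced. Everything downstream of that bound is algebraic bookkeeping on top of Proposition~\ref{claim 1}.
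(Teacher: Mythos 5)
Your route is essentially the paper's: the eigenvalue perturbation identities, contraction with $F^{i\bar i}$, a cone-type bound on $\lambda_1-\lambda_p$ to convert the Hessian cross term into $\sum_{p>1}F^{i\bar{i}}|g_{1\bar p,i}|^2/(8n\lambda_1)$, and then substitution of Proposition \ref{claim 1}. However, one step is missing, and as written your final inequality is not the stated one: Proposition \ref{claim 1} produces the error $-C\sum_{k}F^{i\bar i}|g_{1\bar k, i}|$ over \emph{all} $k$, whereas the conclusion you are proving retains only the $k=1$ quantity $-CF^{i\bar i}|g_{1\bar 1, i}|$. These do not ``directly'' coincide; the $k\neq 1$ first-power terms must be absorbed. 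The paper does this by Cauchy--Schwarz, $|g_{1\bar k,i}|\leq \frac{1}{8n\lambda_1}|g_{1\bar k,i}|^2+C\lambda_1$ for $k\neq 1$, spending half of the cross term (this is precisely why the final coefficient is $\frac{1}{8n\lambda_1}$ rather than the $\frac{1}{4n\lambda_1}$ coming out of the perturbation inequality) at the cost of an extra $C\lambda_1\mathcal{F}$. Your chain has enough slack to do the same, since $\lambda_1-\lambda_p\leq (n-1)\lambda_1$ gives a cross term with coefficient at least $\frac{1}{(n-1)\lambda_1}$, but without stating the absorption the terms $-CF^{i\bar i}|g_{1\bar k,i}|$, $k\neq 1$, remain and the claimed estimate is not reached. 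A second, harmless, slip: after discarding $|g_{p\bar 1,i}|^2$ you keep a factor $2$ on $|g_{1\bar p,i}|^2$; since $g_{p\bar 1,i}$ is the conjugate of $g_{1\bar p,\bar i}$, not of $g_{1\bar p,i}$, only the single term may be kept, which is all you need anyway.

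On one point you are more careful than the paper, whose displayed computation of $L(\lambda_1)$ keeps only the pure second-order part of $L$: you explicitly account for the transport term $2\,\mathrm{Re}\big(F^{i\bar i}W^k_{i\bar i}(\lambda_1)_k\big)$ and the zero-order term. Your claimed bound for the transport term is correct, but it does not follow from Lemma \ref{prop f}(5) alone, because $\mathcal{F}$ cannot be compared with $F^{1\bar 1}$; the component multiplying $(\lambda_1)_1$ has to be treated through the identity $F^{i\bar i}W^k_{i\bar i}=\tilde{F}^{i\bar i}Z^k_{i\bar i}$ together with Lemma \ref{prop Z}(1) and Lemma \ref{prop f}(3),(4), as in the proofs of Lemmas \ref{W 1} and \ref{W 2}, which then yields control of the form $CF^{i\bar i}|g_{1\bar 1,i}|+C\lambda_1\mathcal{F}$. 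With the Cauchy--Schwarz absorption added and this justification supplied, your argument closes and coincides in substance with the paper's.
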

	
	\begin{proof}
	We need the following formulas (see e.g. \cite{CTW19,Spruck05,Szekelyhidi18}):
		\begin{equation*}
			\begin{split}
				\frac{\partial \lambda_1}{\partial \Phi^{q}_{p} }
				= {} & \delta_{1p}\delta_{1q}, \\
				\frac{\partial^{2} \lambda_1}{\partial \Phi^{q}_{p}\partial \Phi^{s}_{r}}
				= {} & (1-\delta_{1p})\frac{\delta_{1q}\delta_{1r}\delta_{ps}}{\lambda_1-\lambda_p} +(1-\delta_{1r})\frac{\delta_{1s}\delta_{1p}\delta_{rq}}{\lambda_1-\lambda_r}.
			\end{split}
		\end{equation*}
	By directly calculation, 
		\begin{equation}\label{lower bound of L lambda1 eqn 1}
			\begin{split}
				L(\lambda_1)
				= {} & F^{i\bar{i}}\frac{\partial^{2} \lambda_1}{\partial \Phi^{q}_{p}\partial\Phi^{s}_{r}}\nbi(\Phi^{s}_{r})\nbbi(\Phi^{q}_{p})
				+F^{i\bar{i}}\frac{\partial \lambda_1}{\partial\Phi^{q}_{p}}\nbi\nbbi
				(\Phi^{q}_{p})\\
				= {} &F^{i\bar{i}}g_{1\bar 1,i\bar{i}}+\sum_{p>1} \frac{F^{i\bar{i}}(|g_{1\bar p,i}|^2+|g_{p\bar 1,i}|^2)}{\lambda_1-\lambda_p}\\
				\geq {} &F^{i\bar{i}}g_{1\bar 1,i\bar{i}}+ \sum_{p>1}F^{i\bar{i}}\frac{|g_{1\bar p,i}|^2}{4n\lambda_1}.
			\end{split}
		\end{equation}
Using the Cauchy-Schwarz inequality, we have for $k\neq 1$,
		\begin{equation}
			|g_{1\bar k, i}|\leq \frac{1}{8n\lambda_1}	|g_{1\bar k,i }|^2+C\lambda_1.
		\end{equation}
		Combining this inequality with  \eqref{lower bound of L lambda1 eqn 1} and Proposition \ref{claim 1}, we obtain Lemma \ref{lower bound of L lambda1}.
		\end{proof}

	\medskip
	Now, we give the lower bound of $L(|\de \vp|^{2})$.
	
	\begin{lemma}\label{lower bound of L de u}
		At $x_{0}$, we have
		\begin{equation}\label{3.7}
			L(|\partial \vp|^{2}) \geq \frac{3}{4} \sum_{j}F^{i\bar{i}}(|\vp_{ij}|^{2}+|\vp_{i\bar j}^{2})-CK\mathcal{F}.
		\end{equation}
	\end{lemma}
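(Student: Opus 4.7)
The plan is to compute $L(|\partial\vp|^2)$ directly at the maximum point $x_0$ and split the outcome into a main good term $\sum_j F^{i\bar i}(|\vp_{ij}|^2+|\vp_{i\bar j}|^2)$ plus bad terms that I control using the differentiated equation. Recall
\begin{equation*}
L(|\partial\vp|^2)=F^{i\bar i}(|\partial\vp|^2)_{i\bar i}+F^{i\bar i}2\re\bigl(W^k_{i\bar i}(|\partial\vp|^2)_k\bigr)+\tilde F^{i\bar i}\tilde B_{i\bar i}\,|\partial\vp|^2.
\end{equation*}
In normal coordinates at $x_0$ (where $\alpha_{i\bar j}=\delta_{ij}$), expanding
\begin{equation*}
(|\partial\vp|^2)_{i\bar i}=\sum_k\bigl(|\vp_{ki}|^2+|\vp_{k\bar i}|^2\bigr)+2\re\sum_k \vp_{ki\bar i}\vp_{\bar k}
\end{equation*}
produces exactly the desired good term $\sum_j F^{i\bar i}(|\vp_{ij}|^2+|\vp_{i\bar j}|^2)$ (after relabeling indices), together with the third-order error $T_3:=2\re\,F^{i\bar i}\sum_k\vp_{ki\bar i}\vp_{\bar k}$. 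The last term $\tilde F^{i\bar i}\tilde B_{i\bar i}|\partial\vp|^2$ is the easiest: since $|\partial\vp|^2\le K$, $|\tilde B|\le C$, and $\tilde F^{i\bar i}\le (n-1)F^{j\bar j}$ for $i,j$ suitably comparable by Lemma \ref{prop f}, it contributes at most $CK\mathcal{F}$.

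For the third-order error $T_3$, I first commute derivatives by \eqref{commutation formulas}, rewriting $\vp_{ki\bar i}=\vp_{i\bar i k}+O(|\vp_{p\bar i}|)$ with bounded coefficients. Then I differentiate the equation $F=(n-1)(h+b)$ in the direction $\nabla_k$ to obtain $F^{i\bar i}g_{i\bar i,k}=(n-1)h_k$, and expand $g_{i\bar i}=\chi_{i\bar i}+\vp_{i\bar i}+W_{i\bar i}+\vp B_{i\bar i}$ to solve for $F^{i\bar i}\vp_{i\bar i k}$. The $\chi$ and $B$ contributions are $O(\mathcal{F})$ in view of $|\vp|\le C$, while the $W$ contribution is handled by exactly the mechanism used in Lemma \ref{W 2}: Lemma \ref{prop Z} eliminates the diagonal $\vp_k$-dependence and Lemma \ref{prop f} reallocates the weights, leading to the key inequality (already recorded in \eqref{third order 2})
\begin{equation*}
|F^{i\bar i}\vp_{i\bar i k}|\le C\sum_p F^{i\bar i}\bigl(|\vp_{pi}|+|\vp_{\bar p i}|\bigr)+CK^{1/2}\mathcal{F}.
\end{equation*}
Multiplying by $|\vp_{\bar k}|\le K^{1/2}$, summing in $k$, and applying Cauchy--Schwarz in the form $K^{1/2}|\vp_{pi}|\le \tfrac{1}{32}|\vp_{pi}|^2+CK$ absorbs the second-derivative residue at the cost of $\tfrac{1}{8}\sum_j F^{i\bar i}(|\vp_{ij}|^2+|\vp_{i\bar j}|^2)+CK\mathcal{F}$.

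For the middle term $F^{i\bar i}2\re(W^k_{i\bar i}(|\partial\vp|^2)_k)$, I compute $(|\partial\vp|^2)_k=\sum_p(\vp_{pk}\vp_{\bar p}+\vp_p\vp_{\bar p k})$, use $|W^k_{i\bar i}|\le C$ and $|\vp_{\bar p}|,|\vp_p|\le K^{1/2}$, and then apply Cauchy--Schwarz termwise: for each $(i,k,p)$,
\begin{equation*}
F^{i\bar i}|W^k_{i\bar i}|K^{1/2}|\vp_{pk}|\le \tfrac{1}{32}F^{i\bar i}|\vp_{pk}|^2+CKF^{i\bar i}|W^k_{i\bar i}|^2,
\end{equation*}
and similarly for the $\vp_{\bar p k}$ piece. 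Summing over $i,k,p$ keeps the good-term absorption well below $\tfrac{1}{8}\sum_j F^{i\bar i}(|\vp_{ij}|^2+|\vp_{i\bar j}|^2)$, while the remaining piece is controlled by $CK\mathcal{F}$.  Adding the three contributions yields the stated inequality with coefficient $3/4$.  The main obstacle is arranging the Cauchy--Schwarz splittings delicately enough to keep exactly $3/4$ of the good term on the right, and ensuring that the $W$-term is not abused into something weighted against $\mathcal{F}$ rather than $F^{i\bar i}$; Lemma \ref{prop f} and Lemma \ref{prop Z} are indispensable for this redistribution, exactly as they were in Lemma \ref{W 2}.
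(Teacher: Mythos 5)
Your proposal is correct and follows essentially the same route as the paper: expand $F^{i\bar i}(|\partial\vp|^2)_{i\bar i}$ to isolate the good quadratic term, commute derivatives, control the third-order term via the differentiated equation (inequality \eqref{third order 2}), and absorb with Cauchy--Schwarz into $\tfrac34$ of the good term plus $CK\mathcal{F}$. The only difference is that you spell out the $W$- and $B$-contributions of $L$ explicitly (which the paper leaves implicit), and these are indeed harmless for the reasons you give.
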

	
	\begin{proof}
By \eqref{commutation formulas}, \eqref{third order 2} and the Cauchy-Schwarz inequality, we have
		\begin{equation}\label{Lemma 1 equation 1}
			\begin{split}
				& F^{i\ov{i}}\big((|\de\vp|_{g}^{2})_{i\ov{i}} \\
				= {} & \sum_{k}F^{i\ov{i}}\left(|\vp_{ki}|^{2}+|\vp_{k\bar i}|^{2}\right)
				+2\textrm{Re}(\sum_{k}F^{i\ov{i}}\vp_{k}\vp_{\ov{k}i\ov{i}}) \\
				\geq {} & \sum_{k}F^{i\ov{i}}\left(|\vp_{ik}|^{2}+|\vp_{i\ov{k}}|^{2}\right)
			 -CK^{\frac{1}{2}}\sum_{k}F^{i\ov{i}}\left(|\vp_{ki}|+|\vp_{k\ov{i}}|\right)-CK\mathcal{F}\\
				\geq {} & \frac{3}{4}\sum_{k}F^{i\ov{i}}\left(|\vp_{ki}|^{2}+|\vp_{k\ov{i}}|^{2}\right)
				-CK\mathcal{F}.
			\end{split}
		\end{equation}
	\end{proof}

	We will use the above computations to prove Proposition \ref{lower bound of L Q}.
	
	\begin{proof}[Proof of Proposition \ref{lower bound of L Q}]
		Combining \eqref{L Q}, \eqref{xieta} and Lemma \ref{lower bound of L lambda1} with Lemma \ref{lower bound of L de u}, we obtain
				\begin{equation}\label{LQ 1}
			\begin{split}
				0\geq \,&
				-\frac{1}{\lambda_1}F^{i\ov{j},k\ov{l}}\nabla_{1}g_{i\ov{j}}\nabla_{\ov{1}}g_{k\ov{l}}-\sum_{i=1}^n\frac{C}{\lambda_1}F^{i\bar i}|g_{1\bar 1, i}|- F^{i\bar{i}}\frac{|(\lambda_1)_{i}|^{2}}{\lambda_1^{2}}\\
				&+\frac{3\eta'}{4} \sum_{j}F^{i\bar{i}}(|\vp_{ij}|^{2}+|\vp_{i\bar j}|^{2})+\eta'' F^{i\bar{i}}|(|\de \vp|^{2})_{i}|^{2} +\xi'L(\vp)\\
				&+\xi'' F^{i\bar{i}}|\vp_{i}|^{2}\
				-C\mathcal{F}.
			\end{split}    
		\end{equation}
		Note that the first term is $G$ and the third term is $N$.
		\end{proof}
		
		Now we complete the proof of Theorem \ref{Thm4.1}.
		
		\begin{proof}
		We now deal with two cases separately. 
		
\medskip
		
		{\bf Case 1.} Assume $\lambda_1\geq -\delta \lambda_n$. Define the set
		\begin{equation}
			I\,=\,\{i:F^{i\bar i}>\delta^{-1}F^{1\bar 1}\}.
			\end{equation} Now we decompose the term N into three terms based on $I$. 
			\begin{equation}
				\begin{split}
				N=\,&F^{i\bar{i}}\frac{|(\lambda_1)_{i}|^{2}}{\lambda_1^{2}}\\
				=\,& \sum_{i\notin I}\frac{F^{i\bar i}|g_{1\bar 1, i}|}{\lambda_1^2}+	2\delta\sum_{i\in I}\frac{F^{i\bar i}|g_{1\bar 1, i}|}{\lambda_1^2}+(1-2\delta)\sum_{i \in I}\frac{F^{i\bar i}|g_{1\bar 1, i}|}{\lambda_1^2}\\
				=:\,&N_1+N_2+N_3.
				\end{split}
				\end{equation}
	Using $dQ(x_0)=0$,  we can prove a lower bound for   the terms $-N_1$ and $-N_2$.
			
			\begin{lemma}\label{third order term 2}
				At $x_0$, we have
				\begin{equation*}
					\begin{split}
					-N_1-N_2 \geq &-\eta''\sum_{k\notin I} F^{i\bar i}||\partial \vp|^2_{i}|^2-2(\xi')^2\delta^{-1}F^{1\bar 1}K-2\delta\eta''\sum_{i\in I} F^{i\bar i}||\partial \vp|^2_{i}|\\
					&-\frac{\xi''}{2}\sum_{k\in I}F^{i\bar i}|\vp_i|^2.
					\end{split}
					\end{equation*}
				\end{lemma}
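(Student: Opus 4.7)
The starting point is the critical equation $dQ(x_0)=0$, which, written componentwise and combined with $\eta''=2(\eta')^2$, gives
\[
\frac{(\lambda_1)_i}{\lambda_1} = -\eta'(|\partial\vp|^2)_i - \xi'\vp_i,
\]
and hence, by the elementary inequality $|a+b|^2\le 2|a|^2+2|b|^2$,
\[
\frac{|(\lambda_1)_i|^2}{\lambda_1^2}\le 2(\eta')^2\bigl|(|\partial\vp|^2)_i\bigr|^2 + 2(\xi')^2|\vp_i|^2.
\]
This is the same Young-type split applied to both $N_1$ (the sum over $i\notin I$) and $N_2$ (the sum over $i\in I$, carrying an extra factor $2\delta$), and the coefficient of the first term coincides exactly with the $\eta''$ in $-N_1$ and the $2\delta\eta''$ in $-N_2$ that appear on the right-hand side of the claimed inequality.

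For $-N_1$ the residual contribution is $2(\xi')^2\sum_{i\notin I}F^{i\bar i}|\vp_i|^2$, which is handled via the defining property of $I^c$: since $F^{i\bar i}\le\delta^{-1}F^{1\bar 1}$ for $i\notin I$, we obtain
\[
\sum_{i\notin I}F^{i\bar i}|\vp_i|^2 \le \delta^{-1}F^{1\bar 1}|\partial\vp|^2\le \delta^{-1}F^{1\bar 1}K,
\]
yielding exactly the second summand $-2(\xi')^2\delta^{-1}F^{1\bar 1}K$. For $-N_2$, the Young split produces a coefficient $4\delta(\xi')^2$ in front of $\sum_{i\in I}F^{i\bar i}|\vp_i|^2$, and the plan is to absorb this into $\frac{\xi''}{2}\sum_{i\in I}F^{i\bar i}|\vp_i|^2$. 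Differentiating $\xi(\vp)=D_1 e^{-D_2(\vp-\sup_M\vp-1)}$ directly gives $(\xi')^2=D_2^2\xi^2$ and $\xi''=D_2^2\xi$, so $\xi''/(\xi')^2 = 1/\xi$, and the absorption $4\delta(\xi')^2\le \tfrac12 \xi''$ reduces to $8\delta\,\xi\le 1$. Since $\vp\le \sup_M\vp$ bounds $\xi$ by $D_1 e^{D_2}$, this can be enforced by taking $D_1$ small enough once $\delta$ has been fixed earlier in Case 1.

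The difficulty here is not any single estimate but the bookkeeping of constants: $\delta$, $D_1$ and $D_2$ must be chosen coherently across the two cases $\lambda_1\ge -\delta\lambda_n$ and its complement, and remain compatible with the treatment of the remaining term $N_3$ and the concavity contribution $G$ later in the proof. Granting such a coordinated choice, Lemma \ref{third order term 2} follows immediately from the two Young splits above applied to the gradient-vanishing identity for $Q$ at $x_0$.
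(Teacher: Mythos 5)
Your core computation is exactly the paper's: use the critical equation \eqref{dQ=0} at $x_0$, split $|\eta'(|\partial\vp|^2)_i+\xi'\vp_i|^2$ by Young's inequality, identify $2(\eta')^2=\eta''$, bound the $i\notin I$ remainder by $\delta^{-1}F^{1\bar 1}K$ via the definition of $I$, and absorb the $i\in I$ remainder using $4\delta(\xi')^2\le\tfrac12\xi''$, which is precisely the paper's condition \eqref{delta}. So as far as the algebra of the lemma goes, the two proofs coincide.

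Where you diverge is in how the absorption condition is secured, and there your bookkeeping has two problems. First, the inequality direction is wrong: since $\vp\le\sup_M\vp$ gives $\vp-\sup_M\vp-1\le-1$, one gets $\xi=D_1e^{-D_2(\vp-\sup_M\vp-1)}\ge D_1e^{D_2}$, a \emph{lower} bound; the needed \emph{upper} bound on $\xi$ (equivalently, the statement that $8\delta\xi\le1$ can be arranged uniformly) comes from the lower bound $\vp\ge-C_0$ of Proposition \ref{Prop32}, giving $\xi\le D_1e^{D_2(1+\sup_M\vp-\inf_M\vp)}$. Second, your proposed ordering (fix $\delta$ first, then shrink $D_1$) is risky: in Case (a) of the proof of Theorem \ref{Thm4.1} one must also ensure $-C_0\mathcal{F}-\tfrac12\kappa\xi'\mathcal{F}\ge0$, i.e.\ $\tfrac{\kappa}{2}D_2\xi\ge C_0$ pointwise, which prevents $\xi$ (hence $D_1$, once $D_2$ is fixed) from being taken arbitrarily small; combined with $8\delta\xi\le1$ this can become unsatisfiable for a prescribed $\delta$. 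The paper avoids this by tuning in the opposite order: $\epsilon$, then $D_2$ large, then $D_1$, and finally $\delta$ small enough (depending on $D_1$, $D_2$ and the $C^0$ bound) to satisfy \eqref{delta}; with that ordering your argument for the lemma goes through verbatim.
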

				\begin{proof}
						Since $Q$ attains its maximum at $x_0$,  we have $dQ(x_0)=0$. Then
					\begin{equation}\label{dQ=0}
						\frac{g_{1\bar 1, i}}{\lambda_1}=-\eta'(|\de \vp|^{2})_{i}-\xi'(\vp)\vp_{i}
					\end{equation}
					for each $1\leq i\leq n$.
							Using \eqref{dQ=0},
		\begin{equation*}
			\begin{split}
			-\sum_{i\notin I}\frac{F^{i\bar i}|g_{1\bar 1, i}|^2}{\lambda_1^2}=\,&-\sum_{i\notin I}F^{i\bar i}|\eta'|\partial \vp|^2_{i}+\xi'\vp_i|^2\\
			\geq\, & -2(\eta')^2\sum_{i\notin I}F^{i\bar i}||\partial \vp|^2_{i}|^2-2(\xi')^2\sum_{i\notin I}F^{i\bar i}|\vp_k|^2\\
			\geq\, &-\eta''\sum_{i\notin I} F^{i\bar i}||\partial \vp|^2_{i}|^2-2(\xi')^2\delta^{-1}F^{1\bar 1}K.
			\end{split}
			\end{equation*}
	Now we choose $\delta$ such that 
\begin{equation}\label{delta}
	4\delta(\xi')^2\leq \frac{1}{2}\xi''.
\end{equation}		
Then, for $k\in I$, using the similar argument, we have
		\begin{equation*}
				\begin{split}
				-2\delta\sum_{i\in I}\frac{F^{i\bar i}|g_{1\bar 1, i}|^2}{\lambda_1^2}
				\geq \,&-2\delta\eta''\sum_{i\in I} F^{i\bar i}||\partial \vp|^2_{i}|^2-4\delta(\xi')^2\sum_{i\in I}F^{i\bar i}|\vp_i|^2\\
				\geq &-2\delta\eta''\sum_{i\in I} F^{i\bar i}||\partial \vp|^2_{i}|^2-\frac{\xi''}{2}\sum_{i\in I}F^{i\bar i}|\vp_i|^2.\\
			\end{split}
			\end{equation*}
	
			\end{proof}

	In the following, we will use $G$ to control the term $N_3$.
				
				\begin{lemma}\label{third order term}
					For any $\epsilon>0$, at $x_0$, we have
					\begin{equation*}
					\begin{split}
					G \geq\,&N_3-\sum_{p}\frac{F^{i\bar i}}{12K}(|\vp_{p\bar i}|^2+|\vp_{pi}|^2)\\
					&+C\xi'F^{i\bar i}|\vp_k|^2+\epsilon C\xi'\mathcal{F}-CF.
					\end{split}
					\end{equation*}
					\end{lemma}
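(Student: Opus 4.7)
The strategy is to apply the concavity of $F$ to bound $G$ from below in terms of off-diagonal third derivatives $|\nabla_1 g_{1\bar i}|^2$ with $i>1$, then commute these to diagonal form $|g_{1\bar 1,i}|^2$ (which is exactly what appears in $N_3$), absorbing the commutator and lower-order errors into the remaining slots of the target inequality. Since $f(\lambda)=\log\prod_k\bigl(\tfrac{1}{n-1}\sum_{i\neq k}\lambda_i\bigr)$ is smooth, symmetric and concave on $T(\Gamma_n)$, the standard Hessian-of-eigenvalues formula at a diagonal point yields
\[
G\ \geq\ \frac{2}{\lambda_1}\sum_{i>1}\frac{F^{i\bar i}-F^{1\bar 1}}{\lambda_1-\lambda_i}\,|\nabla_1 g_{1\bar i}|^2,
\]
after discarding the sign-definite diagonal-Hessian part and the off-diagonal $(p,q)$-pairs with $p,q>1$.

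For $i\in I$ the bound $F^{1\bar 1}\leq\delta F^{i\bar i}$ gives $F^{i\bar i}-F^{1\bar 1}\geq(1-\delta)F^{i\bar i}$; in Case 1 the hypothesis $\lambda_1\geq -\delta\lambda_n$ gives $|\lambda_i|\leq\delta^{-1}\lambda_1$ and hence $\lambda_1-\lambda_i\leq C_\delta\lambda_1$. Together these produce a lower bound $c_\delta F^{i\bar i}|\nabla_1 g_{1\bar i}|^2/\lambda_1^2$ for $G$, with $\delta$ fixed through \eqref{delta} so that the loss is compatible with the $(1-2\delta)$ prefactor of $N_3$. Next, I will convert $|\nabla_1 g_{1\bar i}|^2$ into $|g_{1\bar 1,i}|^2$ using the expansion $g=\chi+\ddbar\vp+W(\partial\vp)+\vp B$ from \eqref{omega formula} together with the commutation relation $\vp_{1\bar 1 i}=\vp_{i\bar 1 1}-T^p_{i1}\vp_{p\bar 1}$ of \eqref{commutation formulas}. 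The difference $\nabla_1 g_{1\bar i}-g_{1\bar 1,i}$ splits into three kinds of error: (a) torsion contractions $T^p_{i1}\vp_{p\bar 1}$ and $W^k_{1\bar i}\vp_{k1}$, which are Hessian-type in $\vp$; (b) first-order pieces $(W^k_{1\bar i})_1\vp_k$ and $\vp_i B_{1\bar i}$; (c) bounded residuals from $\nabla\chi$ and $\vp\cdot\nabla B$.

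Applying Cauchy--Schwarz with an arbitrarily small parameter to isolate $|g_{1\bar 1,i}|^2$ and weighting by $F^{i\bar i}/\lambda_1^2$, the Hessian errors (a) are absorbed into $\sum_p\frac{F^{i\bar i}}{12K}(|\vp_{pi}|^2+|\vp_{p\bar i}|^2)$ (using $\lambda_1\geq K$ to trade $\lambda_1^{-2}$ against $K^{-1}$); the first-order errors (b) into $C\xi' F^{i\bar i}|\vp_k|^2$, since $-\xi'=D_1D_2 e^{-D_2(\vp-\sup\vp-1)}$ can be made arbitrarily large through $D_1,D_2$; and the residuals (c) into $\epsilon C\xi'\mathcal{F}-CF$. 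The main obstacle will be the zero-order piece $\vp B$, whose derivative contains $\vp\cdot\nabla B$; this quantity is not intrinsically small, which is where the supremum bound $\vp\leq M_0 A^{1/(n+1)}$ (Proposition \ref{upper bound u}) and the smallness $A\leq A_0$ come in, allowing $\vp\cdot\nabla B$ to be absorbed into the slack $\epsilon C\xi'\mathcal{F}$. The simultaneous calibration of $\delta$ (from \eqref{delta}), $\epsilon$, and the large constants $D_1,D_2$ so that every error term finds a home is the core bookkeeping task.
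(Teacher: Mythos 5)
Your skeleton is the same as the paper's (concavity of $F$ restricted to $i\in I$, then comparing $\nabla_1 g_{i\bar 1}$ with $g_{1\bar 1,i}$ via the expansion $g=\chi+\ddbar\vp+W(\partial\vp)+\vp B$ and the commutation formulas, then absorbing errors using $\lambda_1\geq CK$ and the largeness of $-\xi'$), but the constant bookkeeping at the one step where constants actually matter does not close as you set it up. Case 1 is the complement of Case 2's condition $\delta\lambda_1<-\lambda_n$, i.e.\ $-\lambda_n\leq\delta\lambda_1$ (the paper's Case 1 header is a typo, and its proof explicitly uses $\delta\lambda_1\geq-\lambda_n$). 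This gives $\lambda_1-\lambda_i\leq(1+\delta)\lambda_1$ and hence the prefactor $(1-\delta)/(1+\delta)\geq 1-2\delta$, i.e.\ exactly the weight carried by $N_3$. Your reading ``$|\lambda_i|\leq\delta^{-1}\lambda_1$, so $\lambda_1-\lambda_i\leq C_\delta\lambda_1$'' only yields a prefactor $c_\delta\sim\delta$, and $\delta$ is not at your disposal here: \eqref{delta} forces $8\delta\,\xi\leq 1$ with $\xi\geq D_1e^{D_2}$, so $\delta$ is small and $c_\delta\ll 1-2\delta$. With that constant, $G$ cannot dominate $N_3$ and the lemma does not follow; ``fixing $\delta$ through \eqref{delta} for compatibility'' is not available.

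Second, because the prefactors match only with an $O(\delta^2)$ surplus, you cannot simply use ``Cauchy--Schwarz with an arbitrarily small parameter to isolate $|g_{1\bar 1,i}|^2$'' on the cross terms $|g_{1\bar 1,i}|\bigl(|W_{i\bar 1,1}|+|W_{1\bar 1,i}|\bigr)$ and $\lambda_1|g_{1\bar 1,i}|$ without saying where the reinserted $\ve'|g_{1\bar 1,i}|^2/\lambda_1^2$ goes. The paper's mechanism, which your outline omits entirely, is the critical-point identity $dQ(x_0)=0$, i.e.\ \eqref{dq inequality}: it converts $|g_{1\bar 1,i}|/\lambda_1$ into $CK^{-1/2}\sum_r(|\vp_{ri}|+|\vp_{\bar r i}|)+C|\xi'||\vp_i|$, so that (together with Lemma \ref{prop Z} and Lemma \ref{prop f}(5), which give $|W_{i\bar 1,1}|+|W_{1\bar 1,i}|\leq C(\lambda_1+\sum_{p>1,q}|\vp_{pq}|)$) these cross terms land directly in the $\frac{1}{12K}\sum_pF^{i\bar i}(|\vp_{pi}|^2+|\vp_{p\bar i}|^2)$, $\xi'F^{i\bar i}|\vp_i|^2$ and $\epsilon\xi'\mathcal{F}$ slots; this is the key ingredient of the lemma. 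Finally, two smaller points: smallness of $A$ is not needed in this lemma --- the term $\vp\,\nabla B$ is bounded by the $C^0$ estimate and its contribution goes into the $-C\mathcal{F}$ slot, the supremum bound $\vp\leq M_0A^{1/(n+1)}$ being used only later (Lemma \ref{Lvp} and the $\kappa$-dichotomy); and one cannot ``absorb'' a positive error into $\epsilon C\xi'\mathcal{F}$, which is negative since $\xi'<0$ --- that term merely records an allowed error of that specific sign and form.
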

					
			\begin{proof}
				By the concavity of the operator $F$ (see \cite[equation (67)]{Szekelyhidi18}) and the definition of $I$, 
				\begin{equation*}
					\begin{split}
					G\geq \,&\sum_{i\in I}\frac{F^{i\bar i}-F^{1\bar 1}}{\lambda_1-\lambda_i}|\nabla_{1}g_{i\bar 1}|^2\\
						\geq \,& \sum_{i\in I}\frac{(1-\delta)F^{i\bar i}}{\lambda_1-\lambda_i}|\nabla_{1}g_{i\bar 1}|^2\\
						\geq \,& \sum_{i\in I}\frac{(1-2\delta)F^{i\bar i}}{\lambda_1}|\nabla_{1}g_{i\bar 1}|^2.\\
					\end{split}
				\end{equation*}
				Here we used $\delta\lambda_1\geq -\lambda_n$ in the last inequality.			
				
			Next, we compare $g_{i\bar 1,1}$ to $g_{1\bar 1,i}$. By \eqref{commutation formulas}, we have
				\begin{equation*}
					\begin{split}
				g_{i\bar 1,1}\,&=\nabla_{1}(\chi_{i\bar 1}+\vp_{i\bar 1}+W_{i\bar 1,1}+\vp B_{i\bar 1})\\
					\,&= \vp_{i\bar 1 1}+W_{i\bar 1,1}+O(K^{\frac{1}{2}})\\
					\,&= \vp_{1\bar 1 i}+W_{i\bar 1,1}+O(\lambda_1)\\
					\, &=g_{1\bar 1, i}-W_{1\bar 1, i}+W_{i\bar 1,1}+O(\lambda_1).
					\end{split}
					\end{equation*}
				It follows that for any $i$, without summing
				\begin{equation}\label{third order 3}
					\begin{split}
					|g_{i\bar 1,1}|^2\geq &|g_{1\bar 1, i}|^2-C\big(|W_{1\bar 1, i}|^2+|W_{1\bar i,1}|^2+\lambda_1^2+\lambda_1|g_{1\bar 1, i}|\\
					&+|g_{1\bar 1,i}|(|W_{i\bar 1,1}|+|W_{1\bar 1,i}|)\big).
					\end{split}
					\end{equation}
	Now we deal with the terms involving $W$.  We have the following inequalities

				\begin{claim}\label{third order 4}
					At $x_0$, we have	
											\begin{equation*}
								\sum_{i\in I}	\frac{1}{\lambda_1^2}F^{i\bar i}|W_{i\bar 1,1}|^2+|W_{1\bar 1,i}|^2\leq C\mathcal{F}+\frac{C}{\lambda_{1}^2}\sum_{p}F^{i\bar i}|\vp_{ip}|^2.
								\end{equation*}
								and		
				\begin{equation*}
				\begin{split}
					\sum_{i\in I}\frac{1}{\lambda_{1}^2}F^{i\bar i}|g_{1\bar 1, i}|(|W_{i\bar 1,1}|+|W_{1\bar 1 ,i}|)\,\leq & \sum_{p}\frac{F^{i\bar i}}{50K}(|\vp_{p\bar i}|^2+|\vp_{pi}|^2)\\
					&-C_{\epsilon}\xi'F^{i\bar i}|\vp_i|^2-\epsilon C\xi'\mathcal{F}+C\mathcal{F}.
				\end{split}
			\end{equation*}
				\end{claim}
				\begin{proof}
Assume $i\in I$. Then $i\neq 1$ and
					\begin{equation*}
						\nabla_1W_{i\bar 1}=\text{tr}_{\alpha}Z\nabla_{1}\alpha_{i\bar 1}-(n-1)\nabla_{1}Z_{i\bar 1}.
						\end{equation*}
	Set $U=\sum_{p>1, q\geq 1}|u_{pq}|$. 
	By Lemma \ref{prop Z}, it follows that
						\begin{equation}\label{w 2nd order}
							|W_{i\bar 1,1}|+|W_{1\bar 1,i}|\leq C(\lambda_1+U).
							\end{equation}
		Therefore,  by Lemma \ref{prop f}(5),
		\begin{equation}
			\begin{split}
				\frac{1}{\lambda_1^2}F^{i\bar i}(|W_{i\bar 1,1}|^2+|W_{1\bar 1,i}|^2)&\leq C\mathcal{F}(1+\frac{U^2}{\lambda_1^2})\\
				&\leq  C\mathcal{F}+\frac{C}{\lambda_{1}^2}\sum_{p}F^{i\bar i}|\vp_{ip}|^2.
				\end{split}
			\end{equation}						
Using \eqref{dQ=0} and \eqref{xieta},  we have
\begin{equation}\label{dq inequality}
	|g_{1\bar 1,i}|\leq \frac{C\lambda_1}{K^{\frac{1}{2}}}(\sum_{r}|\vp_{\bar r i}|+\sum_{r}|\vp_{ri}|)
	+C\lambda_1|\xi'||\vp_{i}|.
	\end{equation} 
By \eqref{w 2nd order}, 
								\begin{equation*}
									\begin{split}
										\frac{1}{\lambda_{1}^2}F^{i\bar i}|g_{1\bar 1, i}|(|W_{i\bar 1,1}|+|W_{1\bar 1,i}|)
										\leq\,& \frac{CF^{i\bar i}}{\lambda_1 K^{\frac{1}{2}}}(\sum_{r}|\vp_{\bar r i}|+\sum_{r}|\vp_{ri}|)(\lambda_1+U)\\
										&+ \frac{CF^{i\bar i}}{\lambda_1 }|\xi'||\vp_{i}|(\lambda_1+U).
										\end{split}
									\end{equation*}
									Now  we deal with the first term. By directly calculation and Lemma \ref{prop f}(5),
									\begin{equation}
										\begin{split}
											   &\sum_{i\in I}\frac{CF^{i\bar i}}{\lambda_1 K^{\frac{1}{2}}}(\sum_{r}|\vp_{\bar r i}|+\sum_{r}|\vp_{ri}|)(\lambda_1+U)\\
											\leq & \frac{CF^{i\bar i}}{K^{\frac{1}{2}}}\sum_{r}|\vp_{\bar r i}|+\frac{C\mathcal{F}}{K^{\frac{1}{2}}}U+\frac{C\mathcal{F}}{\lambda_1K^{\frac{1}{2}}}U^2\\
											\leq &\frac{F^{i\bar i}}{100K}\sum_{r}(|\vp_{\bar r i}|^2+|\vp_{pi}|^2)+C\mathcal{F}.
											\end{split}
										\end{equation}
							
											Next, by the Cauchy-Schwarz inequality, for any $\epsilon>0$, there exists a constant $C_{\epsilon}$ such that
											\begin{equation}
												\begin{split}
	          &\sum_{i\in I}\frac{CF^{i\bar i}}{\lambda_1 }|\xi'||\vp_{i}|(\lambda_1+U)\\
	 \leq & -\epsilon \xi'\mathcal{F}-C_{\epsilon}\xi'F^{i\bar i}|\vp_i|^2-\frac{1}{\lambda_1}\mathcal{F}\xi'U^2-\frac{1}{\lambda_1}\xi'F^{i\bar i}|\vp_i|^2\\
	 \leq &-\epsilon \xi'\mathcal{F}-C_{\epsilon}\xi'F^{i\bar i}|\vp_i|^2+\frac{1}{100K}\sum_{p}F^{i\bar i}|\vp_{pi}|^2,
													\end{split}
												\end{equation}
													where we  used the fact that $\xi'<0$.
										
							Combining  the above three inequalities, we obtain
							\begin{equation*}
								\begin{split}
									\sum_{i\in I}\frac{1}{\lambda_{1}^2}F^{i\bar i}|g_{1\bar 1, i}|(|W_{i\bar 1,1}|+|W_{1\bar 1 ,i}|)\,\leq & \sum_{p}\frac{F^{i\bar i}}{50K}(|\vp_{p\bar k}|^2+|\vp_{pk}|^2)\\
									&-C_{\epsilon}\xi'F^{i\bar i}|\vp_i|^2-\epsilon C\xi'\mathcal{F}+C\mathcal{F}.
									\end{split}
								\end{equation*}

									\end{proof}
						In the following, we use  Claim \ref{third order 4} to prove Lemma \ref{third order term}.	Using \eqref{dq inequality}, Lemma \ref{prop f}(5) and the Cauchy-Schwarz inequality, we have
							\begin{equation}\label{dQ=0 1}
								\begin{split}
								\sum_{i\in I}\frac{1}{\lambda_1}F^{i\bar i}|g_{1\bar 1,i}|
									\leq & \frac{1}{2K^{\frac{1}{2}}}\sum_{p}F^{i\bar i}|\vp_{i\bar p}|+\frac{1}{2K^{\frac{1}{2}}}\mathcal{F}U-C_{\epsilon}\xi'F^{i\bar i}|\vp_i|^2-\epsilon\xi'\mathcal{F} \\
									\leq & \sum_{p}\frac{F^{i\bar i}}{100K}(|\vp_{p\bar i}|^2+|\vp_{pi}|^2)-C_{\epsilon}\xi'F^{i\bar i}|\vp_i|^2-\epsilon\mathcal{F}\xi'.
								\end{split}
								\end{equation}

	Combining this inequality with  \eqref{third order 3} and Claim \ref{third order 4}, we finally obtain
			\begin{equation*}
				\begin{split}
				\sum_{i\in I}\frac{F^{i\bar i}|\nabla_1g_{i\bar 1}|^2}{\lambda_1^2}\geq & \sum_{i\in I}\frac{F^{i\bar i}|g_{1\bar 1, i}|^2}{\lambda_1^2}-\sum_{p}\frac{F^{i\bar i}}{30K}(|\vp_{p\bar i}|^2+|\vp_{pi}|^2)\\
				&+C_{\epsilon}\xi'F^{i\bar i}|\vp_i|^2+\epsilon C\xi'\mathcal{F}-C\mathcal{F},
				\end{split}
				\end{equation*}
				if we choose $\lambda_1\geq CK$.
				This complete the proof of the Lemma.
						\end{proof}

					Combining  \eqref{LQ0} with Lemma \ref{third order term 2}, Lemma \ref{third order term} and \eqref{dQ=0 1}, it follows
					\begin{equation*}
						\begin{split}
						0\geq &\sum_{k}\frac{F^{i\bar i}}{100K}(|\vp_{pi}|^2+|\vp_{p\bar i}|^2)+\frac{1}{2}\xi''F^{i\bar i}|\vp_{i}|^2+\xi'L(\vp)\\
						&-2(\xi')^2\delta^{-1}F^{1\bar 1}K-C\mathcal{F}+C_{\epsilon}\xi'F^{i\bar i}|\vp_i|^2+\epsilon C\xi'\mathcal{F}.
						\end{split}
						\end{equation*}

				In the following we deal with $\xi'L(\vp)$.
				\begin{lemma}\label{Lvp}
							\begin{equation*}
						\xi'F^{i\bar i}\vp_{i\bar i}\geq \xi'F^{i\bar i}(g_{i\bar i}-\chi_{i\bar i})+C_{\epsilon}\xi'F^{i\bar i}|\vp_i|^2+(\epsilon+M_0A^{\frac{1}{n+1}})\xi'\mathcal{F}.
					\end{equation*}			
					\end{lemma}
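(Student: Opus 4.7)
The plan is to solve for $\vp_{i\bar i}$ from the defining relation $\omega=\chi+\ddbar\vp+W+\vp B$, yielding
$$\xi' F^{i\bar i}\vp_{i\bar i}=\xi'F^{i\bar i}(g_{i\bar i}-\chi_{i\bar i})-\xi'F^{i\bar i}W_{i\bar i}-\xi'\vp\,\tilde F^{i\bar i}\tilde B_{i\bar i},$$
where I have used the identity $F^{i\bar j}B_{i\bar j}=\tilde F^{i\bar j}\tilde B_{i\bar j}$, which follows from $(n-1)f_i+\tilde f_i=\mathcal{F}$ (itself immediate from the formula $f_i=\frac{1}{n-1}\sum_{k\neq i}\tilde f_k$). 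It therefore suffices to bound $-\xi' F^{i\bar i}W_{i\bar i}$ from below by $C_{\epsilon}\xi' F^{i\bar i}|\vp_i|^2+\epsilon\xi'\mathcal{F}$ and $-\xi'\vp\,\tilde F^{i\bar i}\tilde B_{i\bar i}$ from below by $M_0 A^{\frac{1}{n+1}}\xi'\mathcal{F}$.

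For the $W$ term I would use the analogous identity $\sum_i F^{i\bar i}W_{i\bar i}=\sum_i \tilde f_i Z_{i\bar i}$ coming from $W=T(Z)$. Writing $Z_{i\bar i}=2\re(Z^k_{i\bar i}\vp_k)$ and invoking Lemma \ref{prop Z}(1) to kill the $k=i$ contribution at $x_0$, I rearrange the sum as $2\re\sum_k\vp_k\sum_{i\neq k}\tilde f_i Z^k_{i\bar i}$ and apply a weighted AM-GM to each summand:
$$2|\tilde f_i Z^k_{i\bar i}\vp_k|\leq \sigma_{ik}f_k|\vp_k|^2+\sigma_{ik}^{-1}\tilde f_i^{\,2}|Z^k_{i\bar i}|^2/f_k.$$
Lemma \ref{prop f}(3)(4) delivers $\tilde f_i\leq (n-1)f_k$ for every $i\neq k$, so $\tilde f_i^{\,2}/f_k\leq (n-1)\tilde f_i$. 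Choosing $\sigma_{ik}=C_{\epsilon}/(n-1)$ and summing yields $|F^{i\bar i}W_{i\bar i}|\leq C_{\epsilon}F^{i\bar i}|\vp_i|^2+\epsilon\mathcal{F}$ once $C_{\epsilon}$ is taken large enough, and multiplying by $-\xi'>0$ gives the required lower bound.

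For the $B$ term, the astheno-Ricci non-positivity hypothesis forces $\tilde B$ to be a non-positive $(1,1)$-form, so at $x_0$ every diagonal entry $\tilde B_{i\bar i}$ is non-positive and $\tilde F^{i\bar i}\tilde B_{i\bar i}=\sum_i\tilde f_i\tilde B_{i\bar i}\leq 0$ with $|\tilde F^{i\bar i}\tilde B_{i\bar i}|\leq C\|\tilde B\|_{\alpha}\mathcal{F}$. I would then split on the sign of $\vp$: when $\vp\geq 0$, the supremum bound $\vp\leq M_0 A^{\frac{1}{n+1}}$ from Proposition \ref{upper bound u} gives $-\vp\,\tilde F^{i\bar i}\tilde B_{i\bar i}\leq C M_0 A^{\frac{1}{n+1}}\mathcal{F}$; when $\vp<0$, both $-\vp>0$ and $-\tilde F^{i\bar i}\tilde B_{i\bar i}\geq 0$, so $-\vp\,\tilde F^{i\bar i}\tilde B_{i\bar i}\leq 0$ is automatic. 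After absorbing the constant into $M_0$, multiplying by $-\xi'>0$ yields $-\xi'\vp\,\tilde F^{i\bar i}\tilde B_{i\bar i}\geq M_0 A^{\frac{1}{n+1}}\xi'\mathcal{F}$, and adding the two lower bounds completes the proof.

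The hard part is the $W$ estimate: a naive Cauchy-Schwarz only gives $|F^{i\bar i}W_{i\bar i}|\leq C|\partial\vp|\mathcal{F}\leq CK^{1/2}\mathcal{F}$, which is too weak because $K$ may be arbitrarily large. The sharp inequality exploits both the vanishing $Z^i_{i\bar i}(x_0)=0$, which prevents $\vp_i$ from being paired with its own weight $\tilde f_i$ (where the ratio $\tilde f_i/f_i$ need not be bounded), and the uniform ratio bound $\tilde f_i/f_k\leq n-1$ for $i\neq k$ from Lemma \ref{prop f}. The $B$ contribution is precisely where the zero-order supremum estimate $\vp\leq M_0 A^{1/(n+1)}$ from Proposition \ref{upper bound u} enters essentially, producing the $M_0 A^{1/(n+1)}$ coefficient in front of $\mathcal{F}$ that will be absorbable in the maximum-principle computation of Theorem \ref{Thm4.1} once $A$ is small enough.
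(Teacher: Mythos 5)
Your proposal is correct and follows essentially the same route as the paper: expand $\vp_{i\bar i}=g_{i\bar i}-\chi_{i\bar i}-W_{i\bar i}-\vp B_{i\bar i}$, convert the $W$ and $B$ terms via $F^{i\bar i}W_{i\bar i}=\tilde F^{i\bar i}Z_{i\bar i}$ and $F^{i\bar i}B_{i\bar i}=\tilde F^{i\bar i}\tilde B_{i\bar i}$, use Lemma \ref{prop Z}(1) together with the ratio bounds of Lemma \ref{prop f} to get $|F^{i\bar i}W_{i\bar i}|\leq C_{\epsilon}F^{i\bar i}|\vp_i|^2+\epsilon\mathcal{F}$, and use $\tilde B\leq 0$ with the supremum bound $\vp\leq M_0A^{1/(n+1)}$ and $\xi'<0$ for the zero-order term. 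Your write-up merely supplies the weighted Cauchy--Schwarz details that the paper leaves implicit, so no substantive difference.
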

					\begin{proof}
						Note
					\begin{equation*}
						\begin{split}
						\xi'L(\vp)=\xi'F^{i\bar i}(g_{i\bar i}-\chi_{i\bar i}-W_{i\bar i}-\vp B_{i\bar i}).
						\end{split}
					\end{equation*}			
					By \eqref{trans}, we have
					\begin{equation*}
						F^{i\bar i}W_{k\bar k}=\sum_{i}\tilde{F}^{i\bar i}Z_{i\bar i}.
						\end{equation*}
	By Lemma \ref{prop Z} and Lemma \ref{prop f}(5), it follows that 
		\begin{equation*}
			|F^{i\bar i}W_{i\bar i}|\leq C\sum_{i\neq k}\tilde{F}^{i\bar i}|\vp_{k}|\leq C_{\epsilon}F^{i\bar i}|\vp_i|^2+\epsilon\mathcal{F}.
			\end{equation*}			
Using \eqref{assum for 2nd}, $\tilde{B}\leq 0$ and Proposition \ref{upper bound u},
	\begin{equation}
		-\xi'\vp F^{i\bar i} B_{i\bar i}=-\xi'\vp \tilde{F}^{i\bar i} \tilde{B}_{i\bar i}\geq -M_{0}A^{\frac{1}{n+1}}|\xi'|\mathcal{F}.
		\end{equation}
	In conclusion, 
		\begin{equation*}
		\xi'F^{i\bar i}\vp_{i\bar i}\geq \xi'F^{i\bar i}(g_{i\bar i}-\chi_{i\bar i})+C_{\epsilon}\xi'F^{i\bar i}|\vp_i|^2+(\epsilon+M_0A^{\frac{1}{n+1}})\xi'\mathcal{F}.
	\end{equation*}			
								\end{proof}
	Therefore,  we have
	\begin{equation}\label{LQ3}
		\begin{split}
		0\geq & F^{1\bar 1}(\frac{\lambda_1^2}{40K}-2(\xi')^2\delta^{-1}K)+(\frac{1}{2}\xi''+C_{\epsilon}\xi')F^{i\bar i}|\vp_{i}|^2\\
		&-C_0\mathcal{F}-\xi'F^{i\bar i}(g_{i\bar i}-\chi_{i\bar i})+(\epsilon+M_0A^{\frac{1}{n+1}})\xi'\mathcal{F}.
		\end{split}
		\end{equation}
		
			\vspace{.2cm}			
By Proposition \ref{properties coefficients},  there is a uniform positive number $\kappa>0$ such that one of two possibilities occurs:

		(a) We have $F^{i\bar i}(\chi_{i\bar i}-g_{i\bar i})>\kappa\mathcal{F}$, then
		\begin{equation*}
			\begin{split}
			0\geq & F^{1\bar 1}(\frac{\lambda_1^2}{40K}-2(\xi')^2\delta^{-1}K)+(\frac{1}{2}\xi''+C_{\epsilon}\xi')F^{i\bar i}|\vp_i|^2-C_0\mathcal{F}\\
			&+(-\kappa+\epsilon+M_0A^{\frac{1}{n+1}})\xi'\mathcal{F}.
			\end{split}
			\end{equation*} 			
We first choose $\epsilon>0$ and $A$ small enough  such that $-\kappa+\epsilon+M_0A^{\frac{1}{n+1}}\leq -\frac{\kappa}{2}$
and choose $D_2$ so large that
 $\frac{1}{2}\xi''+C_{\epsilon}\xi'\geq 0$ which implies
	\begin{equation*}
	\begin{split}
		0\geq & F^{1\bar 1}(\frac{\lambda_1^2}{40K}-2(\xi')^2\delta^{-1}K)-C_0\mathcal{F}-\frac{1}{2}\kappa\xi'\mathcal{F}.
	\end{split}
\end{equation*} 	
By choosing $D_2$ large enough such that 
$-C_0\mathcal{F}-\frac{1}{2}\kappa\xi'\mathcal{F}\geq 0$, we obtain the required upper bound for $\lambda_1$. 
	
							\vspace{.2cm}
					(b) We have $F^{1\bar 1}>\kappa \mathcal{F}$. 
					Choose the constant that is similar to case(a).
				 By \eqref{LQ3},
					\begin{equation*}
						0\geq \kappa\mathcal{F}(\frac{\lambda_1^2}{40K}-2(\xi')^2\delta^{-1}K)-C\mathcal{F}+\epsilon C \xi'\mathcal{F}+C\xi'\mathcal{F}+\xi'F^{i\bar i}g_{i\bar i}.
						\end{equation*} 
				 Since $F^{i\bar i}g_{i\bar i}\leq \mathcal{F}\lambda_1$,  we obtain
					\begin{equation*}
						0\geq \frac{\kappa\lambda_1^2}{40K^2}-C(1+K^{-1}+\lambda_1K^{-1}),
						\end{equation*}
						which implies
						\[\lambda_1\leq CK.\]
					for a uniform constant C. 
						\medskip
						
					{\bf Case 2.} 
				We assume that $\delta\lambda_1<-\lambda_n$ where   $\delta$ is a  fixed constant. Since $F^{n\bar n}\geq \frac{\mathcal{F}}{n}$ and  $\lambda_n^2>\delta^2\lambda_1^2$,
					\begin{equation*}
						\sum_{p}\frac{F^{i\bar i}}{6K}(|\vp_{pi}|^2+|\vp_{p\bar k}|^2)\geq \frac{F^{n\bar n}}{6K}|\vp_{n\bar n}|^2\geq\frac{\delta^2\mathcal{F}}{6n K}\lambda_1^2-C\mathcal{F}.
					\end{equation*}
					Combining with \eqref{LQ0}, 
					\begin{equation*}
						\begin{split}
							0\geq& \frac{-F^{i\bar i}|g_{1\bar 1,i}|^2}{\lambda_1^2}+\frac{\delta^2}{6nK}\mathcal{F}\lambda_1^2+\eta''F^{i\bar i}|\partial_i |\partial \vp|^2|^2+\xi'L(\vp)\\
							&-C(F^{i\bar i}\lambda_1^{-1}|g_{1\bar 1,i}|+\mathcal{F}).
						\end{split}
					\end{equation*}
					By directly calculation, 
					\begin{equation*}
						F^{i\bar i}|\vp_{i\bar i}|\leq C\mathcal{F}\lambda_1
					\end{equation*}
					and 
					\begin{equation*}
						CF^{i\bar i}\lambda_1^{-1}|g_{1\bar1, i}|\leq \frac{1}{2}\frac{F^{i\bar i}|g_{1\bar 1, i}|^2}{\lambda_1^2}+C\mathcal{F}.
					\end{equation*}
					
					Then we obtain 
					\begin{equation}\label{case 21}
						0\geq -\frac{3}{2}\frac{F^{i\bar i}|g_{1\bar 1, i}|^2}{\lambda_1^2}+\frac{\delta^2}{6nK}\mathcal{F}\lambda_1^2+F^{i\bar i}\eta''|\partial_i |\partial \vp|^2|^2-C\mathcal{F}\lambda_1.
					\end{equation}
					
			By \eqref{dQ=0}, it follows
					\begin{equation*}
						\begin{split}
							&\frac{3}{2}\frac{F^{i\bar i}|g_{1\bar 1, i}|^2}{\lambda_1^2}=\frac{3}{2}F^{i\bar i}|\eta'|\partial \vp|_{i}+\xi'\vp_i|^2\\
							\leq & 2F^{i\bar i}\eta'^2|\partial_i|\partial \vp|^2|^2+CF^{i\bar i}(\xi')^2|\vp_i|^2\\
							\leq & F^{i\bar i}\eta''|\partial_i|\partial \vp|^2|^2+C\mathcal{F}K.
						\end{split}
					\end{equation*}
					Assume $\lambda_1\geq K$.
					Returning to \eqref{case 21}, we obtain, 
					\begin{equation*}
						0\geq \frac{\delta^2\lambda_1^2}{6nK}\mathcal{F}-C\lambda_1\mathcal{F}.
					\end{equation*}
					Then we immediately deduce 
					$
					\lambda_1\leq CK.
					$

	\end{proof}

		Using the blow-up argument \cite{Szekelyhidi18}, we have
		$\sup_{M}|\ddbar \vp|\leq C$.	Using Evans-Krylov's theorem \cite{TWWY15, ZZ11}, we have 	$\|\vp\|_{C^{2,\alpha}}\leq C$.	By a standard argument, we can prove the higher order estimates.	Then we have the following Theorem:
	\begin{theorem}\label{higher estimate}
There exists a constant $C$ such that 
	\begin{equation}
		\|\vp\|_{C^{k}}\leq C,
	\end{equation}
	where $C$ depends on $(M, \alpha), h, \omega_0, A$ and $k$.
\end{theorem}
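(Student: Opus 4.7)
The plan is to first upgrade the second order estimate \eqref{HMW} to a genuine $C^2$ bound, then bootstrap. By Theorem \ref{Thm4.1} and the $C^0$ estimate \eqref{assum for 2nd} from Propositions \ref{upper bound u} and \ref{Prop32}, we already have
\begin{equation*}
\sup_{M}|\ddbar\vp|_{\alpha} \leq C\bigl(\sup_{M}|\partial\vp|^{2}_{\alpha}+1\bigr),
\end{equation*}
so the missing ingredient is a uniform gradient bound. Following the blow-up strategy of Sz\'ekelyhidi \cite{Szekelyhidi18} (adapted to the $(n-1)$-PSH setting as in \cite{STW}), I would argue by contradiction: if the gradient were unbounded along a sequence of solutions, one rescales near a maximum point of $|\partial\vp|_{\alpha}$, using the exponent dictated by the scaling of \eqref{n-1 psh}. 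The already-established $C^0$ bound and \eqref{HMW} (together with the fact that the linear term $\vp B$ in \eqref{omega formula} scales away in the limit) show that the rescaled functions converge locally to an entire solution on $\mathbb{C}^{n}$ of the translation-invariant $(n-1)$-Monge--Amp\`ere equation with bounded gradient. Sz\'ekelyhidi's Liouville theorem then forces the limit to be trivial, contradicting the normalization that the rescaled gradient equals $1$ at the origin.

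With the gradient bound in hand, \eqref{HMW} upgrades to $\sup_{M}|\ddbar\vp|_{\alpha}\leq C$, and from \eqref{n-1 psh} together with Lemma \ref{prop f} we see that the eigenvalues $\tilde\lambda_{i}$ of $\tilde\omega$ are controlled from above and, via $\prod\tilde\lambda_{i}=e^{(n-1)(h+b)}$, also uniformly bounded below. Thus the fully nonlinear operator $F$ viewed as a function of $\ddbar\vp$ becomes uniformly elliptic and concave on the range of coefficients that arises. This places us in the setting where the complex Evans--Krylov theorem of \cite{TWWY15, ZZ11} applies, yielding a uniform $C^{2,\alpha}$ estimate $\|\vp\|_{C^{2,\alpha}(M)}\leq C$.

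From here the bootstrap is standard. Differentiating \eqref{omega formula} once, the function $\partial_{s}\vp$ (for any local coordinate direction) satisfies a linear elliptic equation whose coefficients are in $C^{\alpha}$, so Schauder theory gives $C^{3,\alpha}$; iterating this argument produces $\|\vp\|_{C^{k,\alpha}}\leq C_{k}$ for all $k$, where the constants depend on $(M,\alpha)$, $\omega_{0}$, $h$, $A$, and $k$.

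The main obstacle is clearly the gradient estimate: while the second-order estimate \eqref{HMW} and the $C^{0}$ bound are established, the zero-order term $\vp B$ in the equation is a non-trivial perturbation of the pure $(n-1)$-Monge--Amp\`ere equation treated in \cite{STW}, and one must verify carefully that it does not spoil the blow-up / Liouville argument. The supremum estimate $\sup\vp \leq M_{0}A^{1/(n+1)}$ together with the assumption on the astheno-Ricci curvature (making $\vp B$ favorably signed, as exploited in Lemma \ref{Lvp}) is what ensures that this lower-order perturbation disappears in the rescaled limit and does not prevent the Liouville-type conclusion.
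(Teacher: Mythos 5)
Your proposal follows the same route as the paper: the paper's proof is exactly the blow-up argument of Sz\'ekelyhidi (with the Liouville theorem) applied to the estimate \eqref{HMW} to get the gradient and hence full second order bound, then the complex Evans--Krylov theorem of \cite{TWWY15, ZZ11} for $C^{2,\alpha}$, followed by a standard Schauder bootstrap. Your additional remarks on why the zero order term $\vp B$ scales away in the rescaled limit are a reasonable elaboration of what the paper leaves implicit.
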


	\section{Proofs of Theorem \ref{main 2}}
	In this section,  we prove Theorem \ref{main 2}. If $B=0$, using \cite{STW},  we are done. Now we assume $B\neq 0$.
	
	We consider the family of equations $(t\in[0,1])$,
		\begin{equation}\label{equation t1}
			\log\frac{(\tilde{\chi}+\frac{1}{n-1}((\Delta \vp)\alpha-\ddbar \vp)+Z+t\vp\tilde{B})^{n}}{\alpha^n}=(n-1)(th+(1-t)h_0+b_t),
	\end{equation}
	where $h_0=\log \frac{\tilde{\chi}^n}{\alpha^n}$ and $b_t$ are  constant. Suppose $\vp$ satisfies the elliptic condition,
	\begin{equation}\label{Elliptic condition t}
		\tilde{\omega}=\tilde{\chi}+\frac{1}{n-1}((\Delta \vp)\alpha-\ddbar \vp)+Z+t\vp\tilde{B}>0
	\end{equation}
	and the normalization condition
	\begin{equation}\label{Normalization condition t}
		\|e^{\vp}\|_{L^{1}} = A.
	\end{equation}
	We define the space  by
	\[
	\mathcal{U}^{2,\alpha}:=\Big\{\phi\in C^{2,\alpha}(M): \tilde{\omega}>0, \,\,\int_{M}e^{\phi}\,\alpha^n=A\,\,
	\Big\}.
	\]
	We shall prove that  \eqref{equation t1} is solvable for any $t\in[0,1]$.  Let $I$ be the set
	\begin{equation*}
		\{ t\in [0,1] ~|~ \text{there exists $(\vp,t)\in B_{1}$ such that $\Phi(\vp,t)=0$} \}.
	\end{equation*}
	Thus,  to prove Theorem \ref{main 2}, it suffices to prove that $I=[0,1]$.   Note that $\vp_{0}=\frac{1}{2}\ln A$ is a solution of  \eqref{equation t1} at  $t=0$.  Hence, we have $0\in I$.  In the following,   we  prove that the set $I$ is both  open and closed.

	\subsection{Openness}
		Suppose that $(\vp_{\hat{t}},b_{\hat{t}})$ satisfies  $(*)_{\hat{t}}$. In the following, we will show that when $t$ is close to $\hat{t}$, there exists a pair $(\vp_{t},b_t)\in C^{\infty}(M)\times\mathbb{R}$ solving $(*)_{t}$.  	
		
		We denote the linearized operator of \eqref{equation t1} at $\vp_{\hat{t}}$ by:
		\[
		L_{\vp_{\hat{t}}}(\psi):=F^{i\bar j}(\psi)_{i\bar j}+2\re(F^{i\bar j}W^k_{i\bar j}(\psi)_{i})+t\tilde{F}^{i\bar j}\tilde{B}_{i\bar j}\psi.
		\]
		Using maximum principle and $\tilde{B}\neq 0$, 
	\begin{equation}\label{ker L1}
		\mathrm{Ker}	(L_{\vp_{\hat{t}}})=\{0\}.
		\end{equation}
		By the Fredholm alternative, $L_{\vp_{\hat{t}}}$ is a bijective map from $C^{2,\alpha}(M)$ to $C^{\alpha}(M)$. Here $1>\alpha>0$. 
Note that	 the tangent space of $\mathcal{U}^{2,\alpha}$ at $\vp_{\hat{t}}$ is given by
		\begin{equation}\label{tangent space 1}
		T_{\vp_{\hat{t}}}\,\mathcal{U}^{2,\alpha}
		:=\Big\{\psi\in C^{2,\alpha}(M):  \int_{M} e^{\vp_{\hat{t}}}\psi\,\alpha^n=0\,\, \Big\}.
		\end{equation}
		Let us consider the map
		\begin{equation*}
			\begin{split}
				\Phi(\vp,b) = {} & \log \frac{(\tilde{\chi}+\frac{1}{n-1}((\Delta \vp)\alpha-\ddbar \vp)+Z+t\vp\tilde{B})^{n}}{\alpha^n}-b.
			\end{split}
		\end{equation*}
		which maps $\mathcal{U}^{2,\alpha}\times\mathbb{R}$ to $C^{\alpha}(M)$. 
		It is clear that the linearized operator of $\Phi$ at $(\vp_{\hat{t}},\hat{t})$ is given by
		\begin{equation}\label{linear operator}
			(L_{\vp_{\hat{t}}}-b): T_{\vp_{\hat{t}}}\,\mathcal{U}^{2,\alpha}\times \mathbb{R}\longrightarrow  C^{\alpha}(M).
		\end{equation}
		
		Note that for any $h\in C^{\alpha}(M)$, there exists a real function $\psi$ on $M$ such that
		\[
		L_{\vp_{\hat{t}}}(\psi)=h
		\]
and a function $u_0$ satisfying  
\begin{equation}\label{solution for 1}
	 L_{\vp_{\hat{t}}}(u_0)=1.
\end{equation}
Using maximum principle, we have $u_0\leq 0$ and $u_0\neq 0$. Then $\int_{M}u_0e^{\vp_{\hat{t}}}\,\alpha^n<0$.
We can choose $b_0$ such that $\psi+b_0u_0\in T_{\vp_{\hat{t}}}\,\mathcal{U}^{2,\alpha},$ which implies
\[	L_{\vp_{\hat{t}}}(\psi+b_0u_0)-b_0=h.\]
		Hence, the map $L_{u_{\hat{t}}}-b$ is surjective.
		
		 On the other hand, suppose that there are two pairs $(\psi_{1},b_{1}),(\psi_{2},b_{2})\in T_{u_{\hat{t}}}\,\mathcal{U}^{2,\alpha}\times \mathbb{R}$ such that
		\[
		L_{\vp_{\hat{t}}}(\psi_{1})-b_{1}
		= L_{\vp_{\hat{t}}}(\psi_{2})-b_{2}.
		\]
		It then follows that
		\[
		L_{\vp_{\hat{t}}}(\psi_{1}-\psi_{2}) = b_{1}-b_{2}.
		\]
		Since $L_{\vp_{\hat{t}}}$ is bijective,
		by \eqref{solution for 1}, we have
		$\psi_{1}-\psi_{2}=(b_1-b_2) u_0$. Since $$\int_{M}(\psi_{1}-\psi_{2})e^{\vp_{\hat t}}\alpha^n=0,$$
		 We have $b_1=b_2$ which implies that $\psi_{1}=\psi_2$.
		 Then $L_{u_{\hat{t}}}-b$ is injective.
		
		Now we conclude that $L_{u_{\hat{t}}}-c$ is bijective. By the inverse function theorem, when $t$ is close to $\hat{t}$, there exists a pair $(\vp_{t},b_t)\in\mathcal{U}^{2,\alpha}\times\mathbb{R}$ satisfying
		\begin{equation*}
			\Phi(\vp_{t},b_t) = th+(1-t)h_{0}.
		\end{equation*}
		The standard elliptic theory shows that $\vp_{t}\in C^{\infty}(M)$. Then $I$ is open.

		\subsection{Closeness}
		Since $0\in I$ and $I$ is open, there exists $t_{0}\in (0,1]$ such that $[0,t_{0})\subset I$. We need to prove $t_{0}\in I$. First we  show that $\{b_{t}\}$ is uniformly bounded.
		\begin{claim}\label{bt}
			There exists a constant $C$ such that 
			\begin{equation}\label{Closeness equation 1}
				|b_t|\leq C.
			\end{equation}
		\end{claim}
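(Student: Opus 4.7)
The plan is to evaluate the equation $(\ast)_t$ at the maximum and minimum points of $\vp_t$, exploiting both the sign of $\ddbar\vp$ at these points and the hypothesis $\tilde{B}\le 0$ (astheno-Ricci non-positive). The first preliminary is to note that the $C^0$ estimates from Section 3, adapted to the continuity path, yield a uniform bound $\sup_M |\vp_t|\le C$ along $t\in[0,t_0)$; moreover, by shrinking $A_0$ so that $A\le A_0<\mathrm{vol}(M,\alpha)$, the normalization $\int_M e^{\vp_t}\alpha^n=A$ forces $\inf_M\vp_t\le 0$.

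For the upper bound on $b_t$: at a point $x_{\max}$ where $\vp_t$ attains its maximum, $\partial\vp=0$, so $Z=0$ there, and $\ddbar\vp\le 0$. In a local frame where $\alpha=I$ and $\ddbar\vp$ is diagonal with entries $\vp_{j\bar j}\le 0$, a direct diagonalization shows that $(\Delta\vp)\alpha-\ddbar\vp$ has diagonal entries $\sum_{k\neq j}\vp_{k\bar k}\le 0$, hence $\tfrac{1}{n-1}((\Delta\vp)\alpha-\ddbar\vp)\le 0$ as a Hermitian form. Therefore
\[
0<\tilde{\omega}(x_{\max})\le \tilde{\chi}(x_{\max})+t\,\vp(x_{\max})\tilde{B}(x_{\max}),
\]
and since $\vp_{\max}$ together with $\tilde{\chi}$ and $\tilde{B}$ are uniformly bounded, the right-hand side has bounded eigenvalues. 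Taking determinants in equation \eqref{n-1 psh} gives $\tilde{\omega}^n/\alpha^n\le C$ at $x_{\max}$, so $b_t\le C$.

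The lower bound proceeds analogously at a minimum point $x_{\min}$: again $Z=0$, and the matrix identity now yields $\tfrac{1}{n-1}((\Delta\vp)\alpha-\ddbar\vp)\ge 0$, so $\tilde{\omega}(x_{\min})\ge \tilde{\chi}(x_{\min})+t\vp_{\min}\tilde{B}(x_{\min})$. Here the key step is the interplay of the two sign conditions: the astheno-Ricci hypothesis $\tilde{B}\le 0$ combined with $\vp_{\min}\le 0$ forces $t\vp_{\min}\tilde{B}\ge 0$, so $\tilde{\omega}(x_{\min})\ge \tilde{\chi}(x_{\min})\ge c_0\alpha$ for some uniform $c_0>0$, and taking determinants yields $\tilde{\omega}^n/\alpha^n\ge c_0^n$ at $x_{\min}$, hence $b_t\ge -C$. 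The main conceptual obstacle is the lower bound: without $\tilde{B}\le 0$, the term $t\vp_{\min}\tilde{B}$ could be arbitrarily negative (since a priori $\vp_{\min}$ is only bounded, not small), so this is precisely where the astheno-Ricci assumption is indispensable; the auxiliary point $\vp_{\min}\le 0$ is secured by the smallness of $A$ relative to $\mathrm{vol}(M,\alpha)$.
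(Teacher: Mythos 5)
Your core argument is exactly the paper's: evaluate the equation at an extremum of $\vp_t$, where $\partial\vp_t=0$ (so $Z=0$) and $\ddbar\vp_t$ has a sign, observe that $(\Delta\vp_t)\alpha-\ddbar\vp_t$ inherits that sign, and use $\tilde{B}\le 0$ together with the normalization to control the zero-order term. However, as written there is a genuine logical flaw in your preliminary step: you invoke ``the $C^0$ estimates from Section 3, adapted to the continuity path'' to get $\sup_M|\vp_t|\le C$ \emph{before} bounding $b_t$. This is circular. The infimum estimate (Proposition \ref{Prop32}) is an estimate for solutions of the equation whose right-hand side along the path is $e^{(n-1)(th+(1-t)h_0+b_t)}$; its constant depends on an upper bound for that right-hand side (this is where $\tilde{\lambda}_i\le C$ is deduced from $\tilde{\lambda}_i\ge 1/C$ and the equation), hence on $b_t$ itself. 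Likewise, the supremum bound along the path is only established afterwards (Proposition \ref{claim-2}), via a continuity argument that needs the a priori hypothesis $\sup\vp\le 1$. The whole point of Claim \ref{bt} in the paper's scheme is that $b_t$ must be bounded \emph{first}, using nothing beyond the maximum principle and the normalization; your proof cannot assume the Section 3 output as an input here.

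Fortunately the only place you actually use that circular input is to bound $\vp_{\max}=\sup_M\vp_t$ from below (needed because if $\vp_{\max}$ were very negative, $t\vp_{\max}\tilde{B}\ge 0$ could be large and the upper bound on $\det\tilde{\omega}(x_{\max})$ would degrade). But this lower bound follows directly from the normalization, with no PDE estimate: $A=\int_M e^{\vp_t}\alpha^n\le e^{\sup\vp_t}\operatorname{Vol}(M,\alpha)$ gives $\sup\vp_t\ge \ln A-\ln\operatorname{Vol}(M,\alpha)$, which is precisely the paper's move ($\vp_t(p_t)\ge\ln A$ with $\int_M\alpha^n=1$), in the same spirit as your own (correct) use of $A<\operatorname{Vol}(M,\alpha)$ to get $\inf\vp_t\le 0$ for the lower bound on $b_t$. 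With that substitution the rest of your argument — the diagonalization showing $(\Delta\vp)\alpha-\ddbar\vp\le 0$ at the maximum and $\ge 0$ at the minimum, the determinant comparison, and the sign analysis $t\vp_{\min}\tilde{B}\ge 0$ from $\tilde{B}\le 0$, $\vp_{\min}\le 0$ — is correct and coincides with the paper's proof.
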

		\begin{proof}
			Suppose that $\vp_{t}(p_t)=\max_{M}\vp_t$ which implies$\ddbar \vp_{t}\leq 0$ at $p_{t}$.
			Using the equation \eqref{n-1 ma}, we have
			\begin{equation}
				\begin{split}
					th(p_t)+(1-t)h_0(p_t)+b_t\leq& \frac{(\tilde{\chi}+\vp_{t}(p_t)\tilde{B})^n}{(n-1)\alpha^n}\\
					\leq &\frac{(\tilde{\chi}+\min\{0, \vp_{t}(p_t)\} \tilde{B})^n}{(n-1)\alpha^n}.
				\end{split}
			\end{equation}
			Using the normalized condition $\int_{M}e^{\vp}\alpha^n=A$, we have $\vp_{t}(p_t)\geq \ln A$.
			Then we can obtain the upper bound of $b_{t}$:
			\begin{equation}
				\begin{split}
					b_t\leq& \frac{(\tilde{\chi}+\vp_{t}(p_t)\tilde{B})^n}{(n-1)\alpha^n}\\
					\leq &\frac{(\tilde{\chi}+\min\{0, \ln A\} \tilde{B})^n}{(n-1)\alpha^n}-th(p_t)-(1-t)h_0(p_t).
				\end{split}
			\end{equation}
			The lower bound of $b_t$ can be proved similarly by considering the minimum point.
		\end{proof}

Now we  prove the zero order estimate.   In fact,  we have
		\begin{proposition}\label{claim-2}
			There exists a constant $A_0$ such that  if $A\leq A_0$,
			\begin{equation}\label{zero Closeness equation 1}
			\sup\vp \leq 2M_{0}A^{\frac{1}{n+1}}, ~t\in[0,t_{0}),
			\end{equation}
			where $M_{0}$ is the constant in Proposition \ref{upper bound u}.
		\end{proposition}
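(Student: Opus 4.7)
The strategy is a bootstrap along the continuity path. The target bound $\sup\vp_t \le 2M_0A^{\frac{1}{n+1}}$ is a weak ``trapping'' version of the stronger bound $\sup\vp_t \le M_0A^{\frac{1}{n+1}}$ supplied by Proposition \ref{upper bound u}, and one self-improves into the other along an open and closed subset of $[0,t_0)$.

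Concretely, I would first fix $A_0$ so small that $2M_0A_0^{\frac{1}{n+1}}\le 1$. At $t=0$ we have the explicit solution $\vp_0 = \frac{1}{2}\ln A$, and for $A\le A_0$ this gives $\sup_M\vp_0=\frac{1}{2}\ln A < 0 \le 2M_0A^{\frac{1}{n+1}}$. Define
\[
T := \Big\{\,t \in [0,t_0) : \sup_M \vp_s \le 2M_0A^{\frac{1}{n+1}} \text{ for every } s\in[0,t]\,\Big\}.
\]
Then $0\in T$, and $T$ is closed in $[0,t_0)$ because $t\mapsto\vp_t$ is continuous in $C^0$ (this continuity follows from the implicit function theorem invoked in the openness step of the continuity method).

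The key step is to show $T$ is open. If $t\in T$, then in particular $\sup_M\vp_t \le 2M_0A^{\frac{1}{n+1}}\le 1$, so the hypothesis $\sup_M\vp\le 1$ of Proposition \ref{upper bound u}, applied to equation \eqref{equation t1} at parameter $t$, is satisfied. Invoking that proposition yields the strictly stronger estimate $\sup_M\vp_t \le M_0A^{\frac{1}{n+1}}$. By continuity of $t'\mapsto\sup_M\vp_{t'}$, this improved bound persists on a neighborhood of $t$ in $[0,t_0)$, so in particular the weaker bound $\sup_M\vp_{t'} \le 2M_0A^{\frac{1}{n+1}}$ holds there. Hence $T$ is open, and since it is also closed and contains $0$, we conclude $T=[0,t_0)$.

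The main obstacle is to verify that the Moser iteration underlying Proposition \ref{upper bound u} carries over to the continuity-method equation \eqref{equation t1} with a constant $M_0$ independent of $t\in[0,1]$. Applying the inverse Hodge star to the positivity $\tilde\omega_t > 0$ gives the elliptic inequality
\[
\int_M \vp_+^k\Big(\omega_0^{n-1}+\ddbar(\vp\alpha^{n-2})-(1-t)\vp\,\ddbar(\alpha^{n-2})\Big)\wedge\alpha \ge 0,
\]
which differs from the inequality used in Proposition \ref{upper bound u} only by the extra term $-(1-t)\vp\,\ddbar(\alpha^{n-2})\wedge\alpha$. The astheno-Ricci hypothesis $*\ddbar(\alpha^{n-2})\le 0$ forces $\ddbar(\alpha^{n-2})\wedge\alpha$ to be a non-positive $(n,n)$-form, and since $\vp_+\le 1$ in the bootstrap regime we have $\vp_+^{k+1}\le\vp_+^k$, so this extra term is absorbed into $C\int_M\vp_+^k\alpha^n$ uniformly in $t\in[0,1]$. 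The iteration then produces exactly the same constant $M_0$, closing the argument.
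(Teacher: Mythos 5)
Your proof is correct and follows essentially the same route as the paper: after fixing $A_0$ so that $2M_0A_0^{1/(n+1)}\le 1$, the paper runs the same bootstrap in contradiction form, taking a first parameter $\tilde t$ with $\sup_M\vp_{\tilde t}=2M_0A^{\frac{1}{n+1}}$ and applying Proposition \ref{upper bound u} to get the strictly better bound $M_0A^{\frac{1}{n+1}}$, which is just another phrasing of your open--closed trapping argument. Your final paragraph, verifying that the Moser iteration behind Proposition \ref{upper bound u} still runs for the $t$-deformed ellipticity condition because the extra term $-(1-t)\vp\,\ddbar(\alpha^{n-2})$ is absorbed uniformly in $t$ using $\vp_+\le 1$, is a worthwhile addition: the paper applies Proposition \ref{upper bound u} directly to $\vp_{\tilde t}$ without commenting on this point.
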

		\begin{proof}
		Note that  $\vp_{0}=\ln A$.  Then $\sup_{M}\vp_0\leq 2M_{0}A^{\frac{1}{n+1}}$, which satisfies (\ref{zero Closeness equation 1}).   Thus,  if (\ref{zero Closeness equation 1}) is false,  there will  exist  $\tilde{t}\in (0,t_{0})$ such that
		\begin{equation}\label{Closeness equation 2}
			\sup_{M}\vp_{\tilde{t}} = 2M_{0}A^{\frac{1}{n+1}}.
		\end{equation}
		We may assume  that $2M_{0}A^{\frac{1}{n+1}}\leq   1$.    Hence,  we can  apply  Proposition \ref{upper bound u} to $\vp_{\ti{t}}$  and  we obtain
		\begin{equation*}
			\sup_{M}\vp_{\tilde{t}}  \leq M_{0}A,
		\end{equation*}
		which contradicts to (\ref{Closeness equation 2}). This proves (\ref{zero Closeness equation 1}).\\
		\end{proof}
		
Combining Proposition \ref{claim-2} with Proposition \ref{Prop32}, we
have the zero order estimates and the second order estimates Theorem \ref{Thm4.1}. 
Then $C^{\infty}$ a priori estimates of $\vp_{t}$ follows from Theorem \ref{higher estimate}. Combining this with the Arzel\`a-Ascoli theorem, $I$ is closed.
	
		Hence we prove the existence of solutions for the equation \eqref{Herm n-1}. 
		\vspace{2mm}
		
		Now we prove uniqueness.  Assume that we have two solutions $(\vp, b)$ and $(\vp', b')$. Without loss of generality, we can assume there exists a point such that $\vp(p)> \vp'(p)$. Then we have $\inf_{M} (\vp-\vp')\leq 0$. If not, then $\vp\geq \vp'$. Using  $\vp(p)> \vp'(p)$ and $\int_{M}e^{\vp}\alpha^{n}=\int_{M}e^{\vp'}\alpha^{n}$, it is a contraction. Note
		\begin{equation}
		\frac{	(\tilde{\omega}_{\vp'}+\frac{1}{n-1}((\Delta \theta)\alpha-\ddbar \theta)+Z(\partial \theta)+\theta\tilde{B})^{n}}{	(\tilde{\omega}_{\vp'})^{n}}=e^{(n-1)(b-b')}.
			\end{equation}
			where $\tilde{\omega}_{\vp'}=\tilde{\chi}+\frac{1}{n-1}((\Delta \vp')\alpha-\ddbar \vp')+Z(\partial \vp')+\vp'\tilde{B}$ and $\theta=\vp-\vp'$.
	Consider the maximum point of $\vp-\vp'$, we can prove $b\geq b'$. Similarly, it follows that $b'\geq b$ and so $b=b'$.  Then $F(\omega_{\vp})-F(\omega_{\vp'})=0$, where $\omega_{\vp}=T(\tilde{\omega}_{\vp})$.
Using the maximum principle, we can prove $\vp=\vp'$.


\begin{thebibliography}{99}	
		\bibitem{AB} Alessandrini,L., Bassanelli.G, {\em Metric properties of
		manifolds bimeromorphic to compact Kähler spaces,} J.
		Differential Geom. {\bf 37} (1993), 95–121.
		\bibitem{AS} Alesker, S., Shelukhin, E. {\em On a uniform estimate for the quaternionic Calabi problem,}
		Israel J. Math. {\bf 197} (2013), no. 1, 309--327.
		
	
		
		\bibitem{Blocki05} B\l ocki, Z. {\em On uniform estimate in Calabi-Yau theorem}, Sci. China Ser. A {\bf 48} (2005), suppl., 244--247.
		
		\bibitem{Blocki11} B\l ocki, Z. {\em On the uniform estimate in the Calabi-Yau theorem, II}, Sci. China Math. {\bf 54} (2011), no. 7, 1375--1377.
		
        \bibitem{BJ} Bismut, J.-M., {\em A local index theorem for non-K\"ahler manifolds}, Math. Ann. {\bf 284}(1989), no. 4, 681--699.
        \bibitem{CM}	 Chu, J., McCleerey, N., {\em Fully non-linear degenerate elliptic equations in complex geometry,} J. Funct. Anal. {\em 281} (2021), no. 9, Paper No. 109176, 45 pp.
      \bibitem{CHZ19} Chu, J., Huang, L., Zhu, X.,{\em The Fu–Yau Equation in Higher Dimensions,}  Peking Math. J. {\bf2} (2019), no. 1, 71--97.
 \bibitem{CHZ192} Chu, J., Huang, L., Zhu, X., {\em The Fu–Yau equation on compact astheno-K\"ahler manifolds,}  Adv. Math. \textbf{346} (2019), 908--945.

		\bibitem{Calabi57} Calabi, E. {\em On K\"ahler manifolds with vanishing canonical class}, Algebraic geometry and topology. A symposium in honor of S. Lefschetz, pp. 78--89. Princeton University Press, Princeton, N. J., 1957.
		
		
		
	\bibitem{CTW19} Chu, J., Tosatti, V., Weinkove, B. {\em The Monge--Amp\`ere equation for non-integrable almost complex structures}, J. Eur. Math. Soc. (JEMS) {\bf 21} (2019), no. 7, 1949--1984.
		
	
		
		
		
		\bibitem{FJS} Fernandez, M,  Jordan, J., Streets, J., {\em Non-K\"ahler Calabi-Yau geometry and pluriclosed flow}, 
  J. Math. Pures Appl. (9) {\bf 177} (2023), 329--367. 
  
 \bibitem{FG}  Fino,A., and  Grantcharov,G., Properties of manifolds with skew-symmetric torsion and
  special holonomy, Adv. Math. {\bf 189} (2004), no. 2, 439--450.
		\bibitem{FLY12} Fu,J., Li,J, Yau,  S.-T.,  Balanced metrics on non-Kahler Calabi-Yau threefolds, J. Differential Geom. {\bf90} (2012) 81--130.
		\bibitem{FWW10} Fu, J., Wang, Z., Wu, D. {\em Form-type Calabi-Yau equations}, Math. Res. Lett. {\bf 17} (2010), no. 5, 887--903.
		
		\bibitem{FWW15} Fu, J., Wang, Z., Wu, D. {\em Form-type Calabi-Yau equations on K\"ahler manifolds of nonnegative orthogonal bisectional curvature}, Calc. Var. Partial Differential Equations {\bf 52} (2015), no. 1-2, 327--344.
		
		\bibitem{FY} Fu, J., Yau, S.-T., {\em The theory of superstring with flux on non-K\"ahler manifolds and the complex Monge-Amp\`ere equation,} J. Differential Geom. {\bf 78} (2008), no. 3, 369--428. 
		\bibitem{GM}George, M.,  {\em Volume forms on balanced manifolds and the Calabi--Yau equation,} preprint, arXiv:2406.00995.
			\bibitem{GA66}	Gray, A., {\em Some examples of almost Hermitian manifolds,} Illinois J. Math. {\bf10} (1966), 353–
		366
		
		\bibitem{Guan14} Guan, B., {\em Second-order estimates and regularity for fully nonlinear elliptic equations on Riemannian manifolds}, Duke Math. J. {\bf 163} (2014), no. 8, 1491--1524.
		\bibitem{GN} Guan,B.,  and Nie, X., {\em Fully nonlinear elliptic equations with gradient terms on Hermitian manifolds,} Int. Math. Res. Not. IMRN 2023, no. 16, 14006--14042. 
		
		
		
		
		\bibitem {HN81} Hitchin, N., {\em K\"ahlerian twistor spaces,} Proc. London Math. Soc. (3) {\bf43} (1981), no. 1, 133--150.
		
		
		\bibitem{HMW10} Hou, Z., Ma, X.-N., Wu, D. {\em A second order estimate for complex Hessian equations on a compact K\"ahler manifold}, Math. Res. Lett. {\bf 17} (2010), no. 3, 547--561.
		
		 \bibitem{JY93} Jost, J., Yau, S-T, {\em A nonlinear elliptic system for maps from Hermitian to Riemannian manifolds and rigidity theorems in Hermitian geometry}. Acta Math. {\bf170} (1993), no. 2, 221--254. 
		
		
		 \bibitem{MM} Michelsohn, M. L.,  {\em On the existence of special metrics in
		complex geometry}, Acta Math. 149, (1982), 261--295.
		\bibitem{PPZ17} Phong, D. H.;,Picard, S., Zhang, X., {\em The Fu–Yau equation with negative slope parameter,} Invent.
		Math. {\bf 209} (2) (2017) 541--576.
		\bibitem{PPZ19} Phong, D. H.;,Picard, S., Zhang, X., {\em A flow of conformally balanced metrics with K\"ahler fixed points,} Math. Ann. {\bf 374} (2019), no. 3--4, 2005--2040. 
		
		 \bibitem{PPZ21} Phong, D. H.;,Picard, S., Zhang, X., {\em Fu-Yau Hessian equations,} J. Differential Geom. {\bf 118} (2021), no. 1, 147--187. 
		\bibitem{PD} Popovici, D,  {\em Holomorphic deformations of balanced Calabi-Yau $\partial\bar{\partial}$-manifolds}. Ann. Inst. Fourier (Grenoble) {\bf 69} (2019), no. 2, 673–728
		
		\bibitem{R87} Reid, M.,  {\em The moduli space of 3-folds with K = 0 may nevertheless be irreducible}, Math. Ann. 278(1987), 329-334, MR0909231, Zbl 0649.14021. 
		
		\bibitem{Spruck05} Spruck, J. {\em Geometric aspects of the theory of fully nonlinear elliptic equations}, Global theory of minimal surfaces, Amer. Math. Soc., Providence, RI, 2005, 283--309.
		
		
		
		
		\bibitem{Szekelyhidi18} Sz\'ekelyhidi, G. {\em Fully non-linear elliptic equations on compact Hermitian manifolds}, J. Differential Geom. {\bf 109} (2018), no. 2, 337--378.
		\bibitem{STW} Sz\'ekelyhidi, G., Tosatti, V., Weinkove, B. {\em Gauduchon metrics with prescribed volume form. } Acta Math. 219(2017) no 1, 181–211. 
		
		\bibitem{TY} Tseng, L-S., Yau, S-T., {\em Non-K\"ahler Calabi-Yau manifolds,} String-Math 2011, 241--254, Proc. Sympos. Pure Math., 85, Amer. Math. Soc., Providence, RI, 2012.
		
			\bibitem{TV} Tosatti, V. {\em Non-K\"ahler Calabi-Yau manifolds,} Analysis, complex geometry, and mathematical physics: in honor of Duong H. Phong, 261--277, Contemp. Math., 644, Amer. Math. Soc., Providence, RI, 2015. 
		
		\bibitem{TWWY15} Tosatti, V., Wang, Y., Weinkove, B., Yang, X. {\em $C^{2,\alpha}$ estimates for nonlinear elliptic equations in complex and almost complex geometry}, Calc. Var. Partial Differential Equations {\bf 54} (2015), no. 1, 431--453.
		
	
		
		
		\bibitem{TW10b} Tosatti, V., Weinkove, B. {\em The complex Monge--Amp\`ere equation on compact Hermitian manifolds}, J. Amer. Math. Soc. {\bf 23} (2010), no. 4, 1187--1195.
		
		\bibitem{TW17} Tosatti, V. Weinkove, B., {\em The Monge--Amp\`ere equation for $(n-1)$-plurisubharmonic functions on a compact K\"ahler manifold}, J. Amer. Math. Soc. {\bf 30} (2017), no. 2, 311--346.
		
		
		\bibitem{Yau78} Yau, S.-T. {\em On the Ricci curvature of a compact K\"ahler manifold and the complex Monge--Amp\`ere equation, I}, Comm. Pure Appl. Math. {\bf 31} (1978), no. 3, 339--411.
		 \bibitem{Yau21} Yau, S.-T.  {\em Existence of canonical metrics in non-Kähler geometry,} ICCM Not. {\bf 9} (2021), no. 1, 1–10.
		
		\bibitem{YR} Yuan, R., Fully non-linear elliptic equations on complex manifolds, preprint, arXiv:2404.12350.
		\bibitem{ZZ} Zhao, Q., Zheng, F., {\em Strominger connection and pluriclosed metrics} J. Reine Angew. Math. {\bf796} (2023), 245–267.
		\bibitem{ZZ11} Zhang, X., Zhang, X. {\em Regularity estimates of solutions to complex Monge--Amp\`ere equations on Hermitian manifolds}, J. Funct. Anal. {\bf 260} (2011), no. 7, 2004--2026.
		
		
	\end{thebibliography}
\end{document}